\documentclass[11pt]{article}
\usepackage[english]{babel}
\usepackage{xcolor}
\usepackage{amsmath}
\usepackage{amsthm}
\usepackage{amsfonts}
\usepackage{amssymb}
\usepackage{euscript}
\usepackage[cp1251]{inputenc}
\usepackage[pdftex]{graphicx}
\setcounter{section}{0}

\sloppy

\usepackage[pdfpagelabels,bookmarksdepth=3]{hyperref}
\hypersetup{colorlinks=true,citecolor=black,urlcolor=black,linkcolor=black}

\newtheorem{theorem}{Theorem}[section]
\newtheorem{lemma}[theorem]{Lemma}

\theoremstyle{definition}

\theoremstyle{remark}

\numberwithin{equation}{section}

%    Absolute value notation

\begin{document}
\begin{flushleft}
\textbf{\Large{Monotone Lagrangian submanifolds of $\mathbb{C}^n$ and toric topology}}
\end{flushleft}
\begin{flushleft}
\textbf{Vardan Oganesyan}\\
\end{flushleft}

\textbf{Abstract.} Mironov, Panov and Kotelskiy studied  Hamiltonian-minimal Lagrangians inside $\mathbb{C}^n$. They associated a closed embedded Lagrangian $L$ to each Delzant polytope $P$. In this paper we develop their ideas and prove that $L$ is monotone if and only if the polytope $P$ is Fano.

In some examples, we further compute the minimal Maslov numbers. Namely, let $\mathcal{N}\to T^k$ be some fibration over the $k$-dimensional torus with a fiber equal to either $S^k \times S^l$, or $S^k \times S^l \times S^m$, or $\#_5(S^{2p-1} \times S^{n-2p-2})$. We construct monotone Lagrangian embeddings $\mathcal{N} \subset \mathbb{C}^n$ with different minimal Maslov number, and therefore distinct up to Lagrangian isotopy. Moreover, we show that some of our embeddings are smoothly isotopic but not Lagrangian isotopic.
\tableofcontents

\newpage
\section{Introduction}

The simplest example of a symplectic manifold is $\mathbb{C}^n$ with the standard symplectic form. There is a natural question:
\begin{center}
\emph{What can be said about the topology of a closed \\
Lagrangian submanifold $L \subset \mathbb{C}^n$? }
\end{center}
Known restriction results include:

\noindent - Lagrangian $L\subset \mathbb{C}^n$ cannot be simply connected (Gromov~\cite{Gromov}). For example, a sphere cannot be embedded as a Lagrangian submanifold into $\mathbb{C}^n$.

\noindent
- If $n=2$, Lagrangian $L\subset \mathbb{C}^2$ must be diffeomorphic to either the torus $T^2$, or a nonorientable surface with Euler characteristic divisible by $4$ (Nemirovsky~\cite{Nemirovsky}, Shevchishin~\cite{Shev}, Givental~\cite{Givental}).

\noindent
- If $n=3$ and Lagrangian $L \subset \mathbb{C}^3$ is a closed orientable prime\footnote{A $3$-manifold $L$ is called \emph{prime} if any decomposition $L = L_1 \# L_2$ implies that either $L_1$ or $L_2$ is diffeomorphic to $S^3$.} 3-manifold, then $L$ must be diffeomorphic to $S^1\times S_g$, a product of  a circle  and  a closed orientable surface of genus $g$ (Fukaya~\cite{Fukaya}).

\noindent
- If $n > 3$, much less is known. There are many examples of Lagrangian submanifolds constructed from Lagrangian immersions and replacing neighborhoods of the double points by 1-handles (Polterovich~\cite{Polterovich2}).

\medskip

A subclass of \emph{monotone} Lagrangian submanifolds is of special interest due to their prominent role in Floer theory. The basic question is:
\begin{center}
\emph{Given a closed Lagrangian submanifolds $L \subset \mathbb{C}^n$, \\does $L$ also admit a monotone Lagrangian embedding into $\mathbb{C}^n$?}
\end{center}
The monotonicity condition turns out to be very restrictive. In~\cite{Evans,Damian2}, Damian, Evans, Fukaya and Kedra show that if $L$ is monotone closed orientable Lagrangian submanifold of $\mathbb{C}^3$, then $L$ must be diffeomorphic to $S^1 \times S_g$ (note that $L$ is not assumed to be prime), where $S_g$ is a closed orientable surface of genus g.

The monotonicity condition also gives some restrictions on the minimal Maslov number, which we denote by $N_L$. In \cite{Oh2}, Oh constructed a spectral sequence, which starts with Morse cohomology and converges to Floer cohomology. Differentials of this spectral sequence depend on the minimal Maslov number. Using this spectral sequence Oh proved that if $L$ is a closed monotone Lagrangian submanifold of $\mathbb{C}^n$, then $1 \leqslant N_L \leqslant n$.

The following Lagrangian isotopy question is a starting point of this paper:
\begin{center}
\emph{Given two Lagrangian embeddings $i,i':L \hookrightarrow \mathbb{C}^n$, \\ are $i$ and $i'$ Lagrangian isotopic (Hamiltonian isotopic)?}
\end{center}
In papers \cite{Chek1,Chek2}, Chekanov and Schlenk found examples of Lagrangian tori in $\mathbb{C}^n$ that are not Hamiltonian isotopic to the standard torus $(S^1)^n$.  Infinite families of Hamiltonian non-isotopic monotone Lagrangian tori were constructed by Vianna in $\mathbb C P^2$~\cite{Vianna1,Vianna2}, and by Auroux in $\mathbb{C}^3$~\cite{Auroux}. Many further interesting examples are discovered by Mikhalkin~\cite{Mik}.

In papers \cite{MironovCn, Mirpan, kotel} Mironov, Panov  and Kotelskiy studied Hamiltonian-minimal and minimal Lagrangian submanifolds of toric manifolds. In particular, they associated a closed Hamiltonian-minimal Lagrangian submanifold  $L \subset \mathbb{C}^n$ to each Delzant polytope. The Lagrangian $L$ is diffeomorphic to the total space of  fiber bundle over $T^k$, where the fiber is the so-called real moment-angle manifold associated to $P$. In some cases real moment angle manifolds are diffeomorphic to connected sums of sphere products, but in general real moment-angle manifolds define a rich family of smooth manifolds and their topology is far from being completely understood (see \cite{Lopez, real1, real2}).

In this paper, we further study the family of Lagrangians associated to Delzant polytopes, and construct new examples of \emph{monotone} Lagrangian sumbanifolds of $\mathbb{C}^n$. Moreover, we compute the Maslov class and prove that submanifolds are not Lagrangian isotopic. As an application, we discover a Lagrangian rigidity phenomenon: by appealing to the Haefliger-Hirsch classification of smooth isotopy classes~\cite{Hirsch}, we provide a positive answer to the following question:
\begin{center}
\emph{Are there two Lagrangian embeddings $i,i':L \hookrightarrow \mathbb{C}^n$ that are smoothly isotopic but not Lagrangian isotopic?}
\end{center}

We now proceed to precise formulations of our results. Let $P$ be a Delzant polytope in $\mathbb{R}^k$ with $n$ facets, where $n>k$. As it is mentioned above we associate an embedded Lagrangian $L \subset \mathbb{C}^n$ to each Delzant polytope $P$. Our first result is:

\begin{theorem}\label{Fanos}
\emph{Let $P$ be a Delzant and irredundant polytope and $L \subset \mathbb{C}^n$ be the corresponding Lagrangian submanifold. Then $L$ is monotone if and only if the polytope $P$ is Fano.}
\end{theorem}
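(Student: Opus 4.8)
The plan is to reduce the monotonicity of $L$ to a linear-algebraic condition on the defining data of $P$ and then to recognise that condition as the Fano property. Write $P=\{x\in\mathbb R^k:\langle a_i,x\rangle+b_i\ge0,\ i=1,\dots,n\}$ with $a_i\in\mathbb Z^k$ the primitive inward facet normals, and let $\Gamma=(\gamma_{ji})$ be an integer $(n-k)\times n$ matrix whose rows $\gamma_1,\dots,\gamma_{n-k}$ span the lattice of relations $\ker(A)$, where $A=[a_1\,|\cdots|\,a_n]$. Recall that $L=T_\Gamma\cdot\mathbb R\mathcal Z_P$ is the orbit of the real moment-angle manifold under the subtorus $T_\Gamma\subset T^n$ whose Lie algebra is spanned by the rows of $\Gamma$, so that $L$ fibres over $T_\Gamma\cong T^{\,n-k}$ with fibre $\mathbb R\mathcal Z_P$, and on the points of $\mathbb R\mathcal Z_P$ the quadric relations $\sum_i\gamma_{ji}|z_i|^2=c_j$ hold with $c_j=\langle\gamma_j,b\rangle$. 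Since $\mathbb C^n$ is contractible, the boundary map $\partial\colon\pi_2(\mathbb C^n,L)\to\pi_1(L)$ is an isomorphism, so both the area homomorphism $I_\omega$ and the Maslov homomorphism $I_\mu$ descend to classes in $H^1(L;\mathbb R)$ and $H^1(L;\mathbb Z)$; monotonicity of $L$ is the assertion that these classes are proportional with a positive factor in $H^1(L;\mathbb R)$.

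First I would evaluate both classes on an explicit generating set of $H_1(L)$, namely the loops $\sigma_j$ coming from the circle factors of $T_\Gamma$ together with the loops lying in a single fibre $\mathbb R\mathcal Z_P$. For the area class I would use the Liouville primitive $\lambda_{\mathrm{st}}=\tfrac12\sum_i(x_i\,dy_i-y_i\,dx_i)=\tfrac12\sum_i r_i^2\,d\phi_i$, so that $I_\omega(\ell)=\int_\ell\lambda_{\mathrm{st}}$ by Stokes. Restricting to $L$, the phases $\phi_i$ vary only through the $T_\Gamma$-action, giving $d\phi_i=2\pi\sum_j\gamma_{ji}\,d\psi_j$ and hence $\lambda_{\mathrm{st}}|_L=\pi\sum_j c_j\,d\psi_j$ after using the quadric relations; this is a constant-coefficient closed form, so $I_\omega$ vanishes on fibre loops and $I_\omega(\sigma_j)=\pi c_j=\pi\langle\gamma_j,b\rangle$. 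For the Maslov class I would track the argument of $\det_{\mathbb C}$ of a Lagrangian frame along each loop: at a real point the tangent frame consists of real vectors tangent to $\mathbb R\mathcal Z_P$ together with the vectors $i(\gamma_{j1}z_1,\dots,\gamma_{jn}z_n)$, so $\det_{\mathbb C}$ stays in $i^{\,n-k}\mathbb R$ along a fibre loop and does not wind, whereas traversing $\sigma_j$ multiplies $\det_{\mathbb C}$ by $e^{2\pi i(\sum_i\gamma_{ji})\theta}$. Thus $I_\mu$ vanishes on fibre loops and $I_\mu(\sigma_j)=2\langle\gamma_j,\mathbf1\rangle$, where $\mathbf1=(1,\dots,1)$.

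With both classes computed, monotonicity becomes the requirement that $\pi\langle\gamma_j,b\rangle=2\lambda\,\langle\gamma_j,\mathbf1\rangle$ for all $j$ and some $\lambda>0$; since the $\gamma_j$ span $\ker(A)$, this is equivalent to $b-\kappa\mathbf1\perp\ker(A)$ with $\kappa=2\lambda/\pi>0$, i.e. to $b-\kappa\mathbf1\in\mathrm{im}(A^{\mathsf T})$. Hence there is $x_0\in\mathbb R^k$ with $b_i=\langle a_i,x_0\rangle+\kappa$ for every $i$, which says precisely that the point $-x_0$ lies at a common positive lattice distance $\kappa$ from all facets of $P$. The final step is to identify this equidistance property with $P$ being Fano: for a Delzant polytope it is exactly the condition that the associated toric symplectic manifold is monotone. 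I expect the Maslov computation to be the main technical obstacle—one must justify the framing argument carefully (including the vanishing along the fibre, where $\mathbb R\mathcal Z_P$ has nontrivial topology and the frame changes sign), keep the factor-of-two and orientation conventions consistent, and verify that passing from the product $\mathbb R\mathcal Z_P\times T_\Gamma$ to the possibly twisted total space $L$ introduces no extra relations in $H_1(L)$ that could alter the proportionality test; the irredundancy hypothesis is what guarantees that every $a_i$ is a genuine facet normal, so that the equidistance condition is exactly the Fano criterion.
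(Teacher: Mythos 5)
Your proposal is correct and its overall skeleton matches the paper's: evaluate the symplectic area and Maslov homomorphisms on the torus loops and on loops in the fibre $\mathcal{R}$, observe that both vanish on fibre classes, and reduce monotonicity to the linear condition that $b-\kappa\mathbf{1}$ is orthogonal to the relation space, i.e.\ $b-\kappa\mathbf{1}\in\mathrm{im}(A^{\mathsf T})$, which after a translation is exactly the Fano normalization $b_1=\dots=b_n$. (The paper phrases this last step as: the constant vector $(2/\rho,\dots,2/\rho)^{\mathsf T}$ solves $\Gamma b=\delta$.) Where you genuinely diverge is in how the two homomorphisms are computed. The paper pulls back the Liouville form $\lambda=\sum x_j\,dy_j$ by the explicit parametrization $\psi$ and, crucially, obtains the Maslov class from Mironov's mean-curvature formula $\frac{1}{\pi}\omega_H=t_1\,d\varphi_1+\dots+t_{n-k}\,d\varphi_{n-k}$ together with the Cieliebak--Goldstein identification of $\frac1\pi\omega_H$ with the Maslov class; this imports the H-minimality machinery but requires no frame bookkeeping. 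You instead compute the Maslov class from first principles: $T_\Gamma$-equivariance of $L$ gives $I_\mu(\sigma_j)=2\langle\gamma_j,\mathbf{1}\rangle$ from the winding of $\det_{\mathbb C}^2$, and at real points the tangent plane has the form $V\oplus iW$ with $V,W$ real, so $\det^2_{\mathbb C}$ stays in the fixed ray $(-1)^{n-k}\mathbb{R}_{>0}$ along fibre loops and cannot wind — a clean, self-contained replacement for the paper's appeal to $(\ref{lagrmasl})$. Likewise your use of the rotation-invariant primitive $\tfrac12\sum(x_i\,dy_i-y_i\,dx_i)$ makes $\lambda_{\mathrm{st}}|_L=\pi\sum_j c_j\,d\psi_j$ on the nose, whereas the paper's choice leaves an exact remainder that must be argued away on the $d_j$. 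Two small corrections to your list of obstacles: the danger in passing from $\mathcal{R}\times T_\Gamma$ to the twisted quotient is \emph{missing generators}, not extra relations (proportionality verified on a generating set survives any relations); the paper handles this with the transfer homomorphism for the $2^{n-k}$-sheeted cover $\mathcal{R}\times T_\Gamma\to\mathcal{N}$, which shows $pr_*$ is surjective on $H_1(\cdot,\mathbb{R})$, and you should do the same. Also note that the paper additionally introduces the primitive ``half-loops'' $r_i$ with $e_i=2r_i$ (its Lemma on $r_i$ uses connectedness of $\mathcal{R}$, i.e.\ irredundancy); these are not needed for the monotonicity equivalence itself — your full circles $\sigma_j$ suffice over $\mathbb{R}$ — but they are what the paper later uses to extract minimal Maslov numbers, so your variant proves the theorem but yields slightly less auxiliary information.
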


There are many Delzant polytopes (infinitely many even in dimension $2$). Also, there are many Delzant polytopes that are Fano. As a result we obtain a large family of monotone Lagrangian submanifolds. Moreover, many polytopes provide the same diffeomorphism type of $L$. As a simplest nontrivial example, let us consider the product of two simplices $\Delta^{p-1} \times \Delta^{n-p-1} \subset \mathbb{R}^{n-2}$ defined by the inequalities
\begin{equation*}
\begin{gathered}
\left\{
 \begin{array}{l}
x_i + 1 \geq 0 \quad i=1,...,p-1 \\
-x_1 - ... - x_{p-1} +1 \geq 0 \\
x_i + 1 \geq 0 \quad i=p,...,n-1 \\
-x_1 - ... - x_{k} - x_{p} - ... - x_{n-1} + 1 \geq 0  \\
 \end{array}
\right.
\end{gathered}
\end{equation*}
where $n-p+k > p, \; \; k < p-1$.
Denote by $\gcd(a,b)$ the greatest common divisor of $a$ and $b$.

\begin{theorem}\label{ex1}
Let $P_k$ be the polytope defined above, and $L_k \subset \mathbb{C}^n$ be the corresponding Lagrangian submanifold. As an abstract manifold, $L_k$ fibers over $2-$torus $T^2$  with fiber $S^{p-1} \times S^{n-p-1}$. We have:
\begin{itemize}
  \item[-] Lagrangians $L_k$ are monotone;
  \item[-] The minimal Maslov numbers are given by $N_{L_k} = \gcd(p, n-p+k)$;
  \item[-] If $k,p,n$ are even numbers, then the fibration is trivial and $L_k$ is diffeomorphic to $S^{p-1} \times S^{n-p-1} \times T^2$;
  \item[-] As a consequence, we see that if $n, p, k$ are even, then the diffeomorphism type of $L_k$ is independent of $k$, but the Maslov class depends on $k$. Therefore, we get examples of monotone Lagrangians distinct up to Lagrangian isotopy;
  \item[-] Some of the Lagrangians $L_{2k}$ are smoothly isotopic but not Lagrangian isotopic.
  \item[-] The fibration is orientable if and only if numbers $p$ and $n-p+k$ are even.
\end{itemize}
\end{theorem}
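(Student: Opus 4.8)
The plan is to read everything off the combinatorics and the lattice data of $P_k$, feeding the monotonicity criterion of Theorem~\ref{Fanos} and the Maslov formula established above. First I would record the facts that do not depend on $k$. Combinatorially $P_k$ is the product of simplices $\Delta^{p-1}\times\Delta^{n-p-1}$ for every admissible $k$: the face lattice is unchanged, and only the inward normal of the last facet depends on $k$. Since $P_k$ lies in $\mathbb{R}^{n-2}$ and has $n$ facets, the associated $L_k$ fibers over $T^{\,n-(n-2)}=T^2$ with fiber the real moment-angle manifold of $\Delta^{p-1}\times\Delta^{n-p-1}$; as the real moment-angle manifold of a simplex $\Delta^{m}$ is $S^{m}$ and this construction is multiplicative under products, the fiber is $S^{p-1}\times S^{n-p-1}$ for every $k$. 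All defining inequalities have free term $1$ and primitive inward normals, so $P_k$ is Fano; verifying that the $n-2$ normals meeting at each vertex form a $\mathbb{Z}$-basis (and that no inequality is redundant) shows $P_k$ is Delzant and irredundant. Theorem~\ref{Fanos} then gives that every $L_k$ is monotone.

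Next I would compute the minimal Maslov number. Because $\mathbb{C}^n$ is contractible, the Maslov class factors through $\pi_1(L_k)$, and in the range $p,n-p\ge 3$ the fiber is simply connected, so $\pi_1(L_k)\cong\pi_1(T^2)=\mathbb{Z}^2$, generated by the two base circles attached to a basis $\gamma_1,\gamma_2$ of $\ker A\cap\mathbb{Z}^n$, where $A$ is the $(n-2)\times n$ matrix of normals. By the Maslov formula proved earlier, the Maslov index of the loop associated with a kernel vector $\gamma$ is the coordinate sum $\sum_c(\gamma)_c$. A routine solution of $A\gamma=0$ yields one generator supported (with value $1$) on the $p$ facets of the first simplex and a second that is $1$ on the first $k$ facets of the first simplex and on all $n-p$ facets of the second; their coordinate sums are $p$ and $n-p+k$, whence $N_{L_k}=\gcd(p,\,n-p+k)$. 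The degenerate cases $p=2$ or $n-p=2$, where the fiber acquires a circle factor, I would treat separately and check they give the same gcd.

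I would then analyse the fibration itself. It is flat, with monodromy a homomorphism $\epsilon\colon\mathbb{Z}^2\to(\mathbb{Z}/2)^n$ changing the signs of those coordinate spheres on which $\gamma_\ell$ is odd; from the generators above, $\epsilon(\gamma_1)$ is the antipodal map on $S^{p-1}$ and the identity on $S^{n-p-1}$, while $\epsilon(\gamma_2)$ reflects $k$ of the $\mathbb{R}^p$-coordinates and is antipodal on $S^{n-p-1}$. Computing degrees, $\epsilon(\gamma_1)$ preserves orientation iff $p$ is even and $\epsilon(\gamma_2)$ iff $n-p+k$ is even, which is precisely the stated orientability criterion. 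When $p,n,k$ are all even both monodromies are diagonal $\pm1$ matrices with an even number of $-1$'s in each block, and pairing coordinates shows they lie in a common maximal torus $T\subset SO(p)\times SO(n-p)$; extending the homomorphism $\mathbb{Z}^2\to T$ to $\mathbb{R}^2\to T$ and twisting the trivial bundle by it produces an explicit trivialization $L_k\cong T^2\times S^{p-1}\times S^{n-p-1}$.

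Finally I would draw the isotopy conclusions. In the even case the diffeomorphism type is the fixed product $T^2\times S^{p-1}\times S^{n-p-1}$, independent of $k$, whereas $N_{L_k}=\gcd(p,n-p+k)$ varies with $k$; since the minimal Maslov number is a Lagrangian-isotopy invariant, two such $L_k$ with different gcd are monotone, diffeomorphic, and not Lagrangian isotopic. For the rigidity statement, note that $\dim L=n$ embeds in $\mathbb{R}^{2n}$ with $2n>\tfrac32(n+1)$ for $n>3$, so we sit in the metastable range of the Haefliger--Hirsch theorem~\cite{Hirsch}; as $\mathbb{C}^n$ is contractible any two embeddings of the fixed manifold $L$ are homotopic and therefore smoothly isotopic. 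Choosing two even values of $k$ with distinct $\gcd(p,n-p+k)$ then yields Lagrangians that are smoothly isotopic but not Lagrangian isotopic. I expect the main obstacle to be the third step: unravelling the Mironov--Panov construction far enough to identify the monodromy of the real-moment-angle bundle with the explicit sign action $\epsilon$ and to justify the simultaneous common-torus trivialization, since both the diffeomorphism-type and the orientability assertions rest on it; by comparison the Maslov and Haefliger inputs are largely formal once the general formula of the previous section is available.
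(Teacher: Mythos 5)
Most of your proposal is sound and runs parallel to the paper: the identification of the fiber (the paper gets $S^{p-1}\times S^{n-p-1}$ by rewriting the two quadrics as a sphere intersected with a cone over a product of ellipsoids, you get it by multiplicativity of real moment-angle manifolds -- both fine), the Fano/Delzant verification feeding Theorem \ref{Fanos}, the Maslov computation $\mu(r_1)=p$, $\mu(r_2)=n-p+k$ from the coordinate sums of the rows of $\Gamma$, and the orientability criterion. Your trivialization in the all-even case -- putting both monodromies in a common maximal torus of $SO(p)\times SO(n-p)$ and extending $\mathbb{Z}^2\to T$ to $\mathbb{R}^2\to T$ -- is actually a cleaner packaging than the paper's, which only exhibits an isotopy of each transition map separately to the identity (your version handles the compatibility of the two isotopies over $T^2$ automatically, since everything commutes inside $T$; note the pairwise rotations must be taken within coordinate ranges where the $\gamma_j$ agree, so that they preserve $\mathcal{R}$, which your pairing does).

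The genuine gap is in your last step. You write that ``as $\mathbb{C}^n$ is contractible any two embeddings of the fixed manifold $L$ are homotopic and therefore smoothly isotopic.'' This is false, and it contradicts the very theorem you cite: Haefliger--Hirsch (Theorem \ref{Haef}) says that smooth isotopy classes of embeddings $L^n\hookrightarrow\mathbb{R}^{2n}$ are in bijection with $H_1(L,\mathbb{Z}_2)$ for $n$ even -- a set with $4$ elements here, since $H_1(S^{p-1}\times S^{n-p-1}\times T^2,\mathbb{Z}_2)=\mathbb{Z}_2^2$ when $p-1,n-p-1\geqslant 2$. Homotopic embeddings need not be isotopic; the whole content of the classification is that there are several classes despite the contractible target. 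Consequently, choosing just two even values of $k$ with distinct $\gcd(p,n-p+k)$ does \emph{not} let you conclude they are smoothly isotopic: they might lie in different Haefliger--Hirsch classes. The paper instead runs a pigeonhole argument: it chooses parameters (e.g.\ $p=24$, ambient $\mathbb{C}^{6n}$, $k\in\{0,2,4,6,8,12\}$) producing \emph{six} monotone embeddings with pairwise distinct minimal Maslov numbers; since there are at most four smooth isotopy classes, at least two of the six must be smoothly isotopic while failing to be Lagrangian isotopic. Your argument needs this counting step -- constructing strictly more than $\lvert H_1(L,\mathbb{Z}_2)\rvert$ embeddings with distinct $N_{L_k}$, which requires a specific choice of $p$ with many even divisors -- and without it the final bullet of the theorem is unproven.
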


In a similar way we can consider product of three simplices and prove the following theorem:

\begin{theorem}\label{ex2}
Let $n,p,k,q,l$ be arbitrary even positive integers such that
\begin{equation*}
\begin{gathered}
q < l < k < p < n, \quad k-l-q  < 0, \quad n-p+l < p-k+q.
\end{gathered}
\end{equation*}
There exists an embedding of
\begin{equation*}
\mathcal{N} = S^{n-p+k-q-1} \times S^{p-k-1} \times S^{q-1} \times T^3
\end{equation*}
into $\mathbb{C}^n$ with minimal Maslov number
\begin{equation*}
N_{\mathcal{N}} = gcd(n-p+l, l+q-k, p-k+q),
\end{equation*}
We see that the  diffeomorphism type of $\mathcal{N}$ is independent of $l$, unlike the Maslov class. Thus we obtain embeddings of $\mathcal{N}$ distinct up to Lagrangian isotopy. Some of these embeddings are smoothly isotopic but not Lagrangian isotopic.

In general, if $n,p,k,q,l$ are not even, then $\mathcal{N}$ fibers over the $3-$torus $T^3$ with fiber $S^{n-p+k-q-1} \times S^{p-k-1} \times S^{q-1}$. The  fibration is orientable if and only if numbers $n-p+l$, $k-l+q$, $p-k+q$ are even.
\end{theorem}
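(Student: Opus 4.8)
The plan is to treat this exactly as Theorem~\ref{ex1}, replacing the product of two simplices by a product of three. First I would write down explicitly a system of $n$ inequalities in $\mathbb{R}^{n-3}$ cutting out a polytope combinatorially equal to $\Delta^{n-p+k-q-1}\times\Delta^{p-k-1}\times\Delta^{q-1}$, in which three of the ``outer'' facets carry shearing terms coupling the three coordinate blocks, controlled by the parameters $k,l,q$, in direct analogy with the coupling term $-x_1-\dots-x_k$ appearing in the polytope $P_k$ of Theorem~\ref{ex1}. The arithmetic hypotheses $q<l<k<p<n$, $k-l-q<0$ and $n-p+l<p-k+q$ are precisely what is needed to guarantee that these inequalities define a bounded, nonempty, simple polytope whose facet normals are primitive, that the polytope is Delzant and irredundant, and that it is Fano. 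Once this combinatorial bookkeeping is done, monotonicity of the associated Lagrangian $\mathcal{N}$ is immediate from Theorem~\ref{Fanos}.

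For the underlying smooth manifold I would invoke the general description of the Mironov--Panov--Kotelskiy Lagrangian as the total space of a bundle over $T^{n-k}$ with fiber the real moment-angle manifold $\mathcal{R}_P$. Since the real moment-angle manifold of a product of simplices is the corresponding product of spheres, here the fiber is $S^{n-p+k-q-1}\times S^{p-k-1}\times S^{q-1}$ and the base is $T^3$. The monodromy of this bundle is read off from the mod-$2$ reductions of the shearing coefficients; I would check that it acts orientation-preservingly on the fiber exactly when $n-p+l$, $k-l+q$ and $p-k+q$ are even, and that it is trivial (so the bundle is a product and $\mathcal{N}\cong S^{n-p+k-q-1}\times S^{p-k-1}\times S^{q-1}\times T^3$) when all of $n,p,k,q,l$ are even. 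This is where the three combinations that later control the Maslov number first appear, and where one sees that the diffeomorphism type of $\mathcal{N}$ does not depend on $l$.

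The heart of the argument is the computation of the minimal Maslov number, and I expect this to be the main obstacle. I would use the combinatorial formula for the Maslov class of these Lagrangians, according to which the Maslov class vanishes on the homology carried by the fiber and restricts to the base $T^3$ as the linear functional determined by the matrix of facet normals together with the rank-$3$ lattice of linear relations among them. Evaluating this functional on the three generators of $H_1(T^3)$ yields the integers $n-p+l$, $l+q-k$ and $p-k+q$; since $\pi_2(\mathbb{C}^n)=0$, the minimal Maslov number $N_{\mathcal{N}}$ is the positive generator of the image of the Maslov class, i.e. the $\gcd$ of these three integers. The delicate points are choosing a basis of the relation lattice so that the shearing parameter $l$ is correctly distributed among the three values, and verifying that no smaller Maslov index is realized by any other class in $H_1(\mathcal{N})$.

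Finally I would draw the two isotopy conclusions. Because the diffeomorphism type of $\mathcal{N}$ is independent of $l$ while $N_{\mathcal{N}}=\gcd(n-p+l,\,l+q-k,\,p-k+q)$ genuinely varies with $l$, and because the minimal Maslov number is a Lagrangian-isotopy invariant, embeddings with different $\gcd$ are not Lagrangian isotopic. For the sharper statement I would appeal to the Haefliger--Hirsch classification of smooth embeddings of an $n$-manifold into $\mathbb{R}^{2n}=\mathbb{C}^n$: for the products of spheres and tori at hand the smooth isotopy class is controlled by a normal invariant and a single cohomological quantity that are insensitive to $l$, so one can select values $l\neq l'$ giving smoothly isotopic embeddings whose Maslov numbers differ; these are then smoothly isotopic but not Lagrangian isotopic.
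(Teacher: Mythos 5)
Your outline reproduces the paper's proof in all of its main steps: the sheared product-of-three-simplices system, the bundle $\mathcal{N}=\mathcal{R}\times_{D_\Gamma}T_\Gamma\to T^3$, the parity analysis of the three involutions giving orientability exactly when $n-p+l$, $k-l+q$, $p-k+q$ are even and triviality when all parameters are even, and the Maslov computation on the base classes $r_1,r_2,r_3$ of Lemma \ref{class} via formula (\ref{lagrmasl2}), yielding $\mu(r_1)=n-p+l$, $\mu(r_2)=k-l-q$, $\mu(r_3)=p-k+q$ and hence $N=\gcd(n-p+l,\,l+q-k,\,p-k+q)$. Two routing differences are worth flagging. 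First, the paper does not pass through Theorem \ref{Fanos} here: it checks $\Lambda_u=\Lambda$ directly and applies Lemma \ref{l} for embeddedness, then computes $\mu$ and $\psi^{*}\lambda$ on the $r_i$ simultaneously (monotonicity constant $\frac{\pi}{2}$); your Delzant--Fano route is legitimate (indeed $\delta=\Gamma(1,\dots,1)^T$ for this system) but buys nothing extra, since the explicit Maslov values are needed anyway. Second, for the fiber you invoke combinatorial invariance of real moment-angle manifolds, while the paper normalizes the system and applies the three-quadric classification (Theorem \ref{threequad}) after deforming the Gale configuration $\lambda_1,\dots,\lambda_5$ with multiplicities $q,\,l-q,\,k-l,\,p-k,\,n-p$. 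Your route is viable but not cheaper: verifying that the sheared polytope is still combinatorially $\Delta^{n-p+k-q-1}\times\Delta^{p-k-1}\times\Delta^{q-1}$ uses the hypotheses $k-l-q<0$ and $n-p+l<p-k+q$ in exactly the same way the paper uses them to keep the deformation of the $\lambda_i$ regular, so the bookkeeping cannot be waved through.

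There is, however, a genuine gap in your last step. You assert that the Haefliger--Hirsch smooth isotopy class is ``controlled by a normal invariant and a single cohomological quantity that are insensitive to $l$,'' and conclude that suitable $l\neq l'$ give smoothly isotopic embeddings. Theorem \ref{Haef} provides only a bijection between smooth isotopy classes and $H_1(L,\mathbb{Z})$ (or $H_1(L,\mathbb{Z}_2)$ for even $n$); it does not identify which class a given embedding $\psi$ occupies, and nothing in the construction shows this class is independent of $l$ --- a priori it could vary with $l$ just as the Maslov class does. The paper closes this by a counting argument: here $n$ is even and $H_1(L,\mathbb{Z}_2)\cong\mathbb{Z}_2^3$, so there are at most $8$ smooth isotopy classes; choosing $q=24$, $k=120$, $p=96m$, $n=96m+96$ one obtains ten even values of $l$ realizing ten pairwise distinct minimal Maslov numbers $2,4,6,8,12,16,24,32,48,96$, whence by pigeonhole at least two of these embeddings are smoothly isotopic but, having different $N_L$, not Lagrangian isotopic. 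Note this requires a deliberate parameter choice: generic parameters realize too few distinct values of the $\gcd$ (the paper's first family yields only six), so ``select $l\neq l'$'' does not by itself produce smoothly isotopic pairs. Your argument becomes correct once the unsupported insensitivity claim is replaced by this cardinality bound plus an explicit family with more than eight distinct minimal Maslov numbers.
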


As we mentioned above, if $L \subset \mathbb{C}^3$ is a closed orientable monotone Lagrangian, then $L$ is diffeomorphic to $S_g \times S^1$. Let us think about $S_g \times S^1$ as a trivial fiber bundle over $S^1$ with fiber $S_g = \#_g(S^1 \times S^1)$. The following theorems can be considered as a generalization of this example.

\begin{theorem}\label{th3}
Let $P \subset \mathbb{R}^2$ be a Delzant and Fano pentagon and $L \subset \mathbb{C}^5$ be the corresponding Lagrangian. Then $L$ is diffeomorphic to a fiber bundle over $T^3$ with fiber an oriented surface of genus $5$. Moreover, $L$ is embedded monotone submanifold of $\mathbb{C}^5$ . The fiber bundle is nonorientable and the minimal Maslov number of $L$ is equal to $1$.
\end{theorem}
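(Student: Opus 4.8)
The plan is to read off every assertion from the general structure of the Mironov--Panov--Kotelskiy Lagrangian together with Theorem~\ref{Fanos}, reducing the only genuine computation to the minimal Maslov number for the pentagon. First I would fix the bookkeeping: a pentagon has $k=2$ and $n=5$ facets, so the associated Lagrangian lives in $\mathbb{C}^5$ and, by the general construction, is the total space of a fiber bundle over $T^{\,n-k}=T^3$ whose fiber is the real moment--angle manifold $\mathcal{R}_P$. Since $\mathcal{R}_P$ depends only on the combinatorial type of $P$ (a $5$-cycle), it is the same surface for every pentagon. Gluing the $2^5=32$ copies of $P$ and counting cells gives $V-E+F=40-80+32=-8$ (each facet contributes $2^{4}$ edges and each vertex is shared by $2^{2}$ copies), so $\mathcal{R}_P$ is the orientable genus-$5$ surface $S_5$; equivalently the real moment--angle manifold of an $m$-gon has genus $1+(m-4)2^{m-3}$, which is $5$ for $m=5$. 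This yields the stated diffeomorphism type, while embeddedness is part of the general construction and monotonicity is immediate from Theorem~\ref{Fanos}, since $P$ is Delzant and Fano.

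Next I would compute the minimal Maslov number using the Maslov-class formula for these Lagrangians: on the lattice $\Gamma=\ker A\subset\mathbb{Z}^n$, where $A=[\,u_1\mid\cdots\mid u_n\,]$ is the matrix of primitive inward facet normals, the Maslov class is $\mu(\gamma)=\sum_i\gamma_i$ in the Fano normalization $\langle u_i,x\rangle+1\ge 0$ (the value $\sum_i b_i\gamma_i$ is translation-invariant on $\ker A$, hence well defined). Up to unimodular equivalence there is a unique Delzant Fano pentagon, namely the one associated to $\mathbb{P}^2$ blown up at two torus-fixed points, with normals $u_1=(1,0)$, $u_2=(0,1)$, $u_3=(-1,1)$, $u_4=(-1,0)$, $u_5=(0,-1)$; this is the only case to treat. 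Solving $A\gamma=0$ parametrizes $\Gamma\cong\mathbb{Z}^3$ by $(\gamma_2,\gamma_3,\gamma_4)$ and gives $\mu=2\gamma_2+3\gamma_3+2\gamma_4$. Since $\gcd(2,3,2)=1$ the Maslov class is surjective onto $\mathbb{Z}$; concretely $\gamma=(1,-1,1,0,0)\in\Gamma$ has $\mu(\gamma)=1$. Together with Oh's lower bound $N_L\ge 1$ (cf.~\cite{Oh2}) this forces $N_L=1$. Note that exhibiting a single Maslov-$1$ loop makes the fiber loops in $H_1(S_5)$ irrelevant here, so the computation is in fact lighter than in Theorems~\ref{ex1}--\ref{ex2}, where controlling all of $H_1$ is needed.

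Finally I would deduce nonorientability. Because the mod-$2$ reduction of the Maslov class equals $w_1(TL)$, the odd Maslov loop just produced shows $w_1(TL)\neq 0$, so $L$ is nonorientable; as the base $T^3$ and the fiber $S_5$ are both orientable, this is exactly the assertion that the $S_5$-bundle is nonorientable, its monodromy reversing the fiber orientation. The only place demanding care is the Maslov computation: it relies on the explicit Maslov-class formula developed earlier and on using the correct Fano representative. Everything else is formal. It is worth emphasizing that the odd coefficient $3$, forced by $u_3$ appearing in two of the kernel relations, is precisely what separates the pentagon ($5$ facets) from the even sphere-product examples and is the reason the Maslov number drops to the minimal value $1$.
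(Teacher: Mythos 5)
Your proposal is correct, and its skeleton matches the paper's proof: the fibration over $T^3$ with fiber the genus-$5$ surface (the paper cites Lemma~\ref{lemsurface}; your Euler-characteristic count $40-80+32=-8$ is an independent verification of the same fact), embeddedness and monotonicity from Theorems~\ref{embdelzant} and~\ref{Fanos} --- note it is the Delzant hypothesis, not the construction per se, that yields embeddedness --- and the Maslov number via the explicit formula for these Lagrangians. Two steps genuinely differ, to your credit. First, you compute $\mu$ on the relation lattice $\ker A$ via $\mu(\gamma)=\sum_i\gamma_i$; this is an equivalent, Gale-dual repackaging of the paper's formula $(\ref{lagrmasl2})$ (the values $\mu(r_i)$ are exactly $\sum_j w_j$ over the integer relations $w=\Gamma^T\varepsilon_i$, and the map $\Lambda^{*}\to\ker_{\mathbb{Z}}A$ is onto since the $\gamma_j$ generate $\Lambda$). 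Where the paper evaluates $\mu(r_1)=\mu(r_2)=2$, $\mu(r_3)=3$ and takes $\gcd(2,2,3)=1$, you exhibit the single relation $u_1-u_2+u_3=0$ of Maslov value $1$; your remark that one unit value makes the fiber classes irrelevant is a genuine simplification (Oh's bound is not even needed: $1\in\mu(H_1(L,\mathbb{Z}))$ already forces the generator to be $1$). Second, and more substantively, your nonorientability argument takes a different route: the paper checks directly, as in Lemma~\ref{orientation}, that the involution $\varepsilon_5$ preserves the equivariant normal framing of $S_5\subset\mathbb{R}^5$ while acting with determinant $-1$ on $\mathbb{R}^5$, hence reverses the fiber orientation; you instead invoke the classical fact $w_1(TL)\equiv\mu\pmod 2$ for Lagrangians in $\mathbb{C}^n$ and feed it your Maslov-$1$ loop, then use orientability of base and fiber to pass from nonorientability of $L$ to nonorientability of the bundle. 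This is sound and avoids any case-by-case involution check, at the price of importing a standard fact the paper never states (it is consistent with the paper's computation, since the deck transformation attached to $r_i$ acts on $\mathbb{R}^5$ with determinant $(-1)^{\mu(r_i)}$). You also make explicit a point the paper leaves silent: the theorem is stated for an arbitrary Delzant Fano pentagon while both proofs treat one representative, and your appeal to uniqueness of the smooth reflexive pentagon (the $\mathrm{Bl}_2\mathbb{P}^2$ fan, unimodularly equivalent to the paper's normals $(1,0),(0,1),(-1,0),(0,-1),(-1,-1)$) supplies the needed normalization. One cosmetic slip: your parenthetical about translation-invariance of $\sum_i b_i\gamma_i$ concerns the symplectic area class, not the Maslov class, which involves no choice of $b$; it is harmless here.
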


\begin{theorem}\label{th4}
Let $p,q$ be arbitrary even positive numbers such that $q < p-1$. There exists an embedding of $\mathcal{N} = \#_5(S^{2p-1} \times S^{3p-2}) \times T^3$ into $\mathbb{C}^{5p}$ with the minimal Maslov number $gcd(p, q)$. As in the previous theorems we see that the diffeomorphism type of the Lagrangian is independent of $q$, unlike the Maslov class. As such, we obtain embeddings distinct up to Lagrangian isotopy, and again, some of these embeddings are smoothly isotopic but not Lagrangian isotopic.

\end{theorem}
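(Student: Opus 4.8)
The plan is to realize $\mathcal{N}$ as the Lagrangian $L$ associated to an explicit Delzant Fano polytope and then read off monotonicity, the Maslov number, and the two isotopy statements from the machinery already in place. The natural starting point is the Fano pentagon of Theorem \ref{th3}: its real moment-angle manifold $\mathbb{R}\mathcal{Z}_P$ is the genus-$5$ surface $\#_5(S^1\times S^1)$, and the associated Lagrangian fibers over $T^{5-2}=T^3$. Since $(2p-1)+(3p-2)=5p-3$ and $5+(5p-5)=5p$, I want a Delzant Fano polytope $P$ of dimension $k=5p-3$ with $n=5p$ facets (so that $n-k=3$ is preserved and $L$ again fibers over $T^3$) whose real moment-angle manifold is $\#_5(S^{2p-1}\times S^{3p-2})$. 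I would build $P$ from the pentagon by $5p-5$ iterated wedge operations (equivalently, doublings/substitutions on the dual simplicial complex): each wedge raises both $n$ and $k$ by one while, on the real moment-angle manifold, increasing the dimension of one of the two sphere factors in every connected summand by one. Performing $2p-2$ wedges of the first type and $3p-3$ of the second turns $\#_5(S^1\times S^1)$ into $\#_5(S^{2p-1}\times S^{3p-2})$, as required.

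The first block of work is to check that this can be arranged so as to preserve the Delzant and Fano properties, and to confirm the claimed effect on $\mathbb{R}\mathcal{Z}_P$. Delzantness is a local, combinatorial-plus-integrality condition at each vertex and is stable under the suitably normalized wedge; the Fano property — that each facet normal sits at lattice distance one from the interior lattice point — can be maintained by choosing the wedge heights, exactly as the auxiliary integer parameters were chosen in Theorems \ref{ex1} and \ref{ex2}. Once $P$ is fixed, Theorem \ref{Fanos} immediately yields that $L$ is monotone. The identification $L\cong\mathcal{N}$ then follows from the general description of $L$ as an $\mathbb{R}\mathcal{Z}_P$-bundle over $T^{n-k}$: because $p$ is even the relevant monodromy (equivalently the first Stiefel--Whitney data of the bundle) vanishes, the bundle is trivial, and $L\cong \#_5(S^{2p-1}\times S^{3p-2})\times T^3$, matching the orientability clause.

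Next I would compute the minimal Maslov number. The construction carries a free integer shearing parameter $q$ in the defining inequalities of $P$ that does not change the combinatorial (hence diffeomorphism) type of $L$ but does alter the primitive normals, and therefore the Maslov class. As in Theorems \ref{ex1} and \ref{ex2}, the Maslov class is determined by the integer linear relations among the facet normals imposed by the Fano condition, and its minimal positive value over the relevant cycles is a greatest common divisor of those coefficients; the bookkeeping should give $N_{\mathcal{N}}=\gcd(p,q)$. Since $p$ is fixed and even while $q$ ranges over even integers with $q<p-1$, varying $q$ leaves $\mathcal{N}$ diffeomorphic to itself but changes $\gcd(p,q)$; as $N_{\mathcal{N}}$ is a Lagrangian-isotopy invariant, the resulting embeddings are pairwise non–Lagrangian isotopic whenever their gcd's differ (e.g.\ $p=8$, $q=2$ versus $q=4$).

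Finally, for the rigidity statement I would invoke the Haefliger--Hirsch classification \cite{Hirsch} of smooth embeddings of a closed orientable $n$-manifold into $\mathbb{R}^{2n}$: here $n=5p=\dim\mathcal{N}$ and the ambient space is $\mathbb{C}^{5p}=\mathbb{R}^{10p}=\mathbb{R}^{2n}$, so we sit exactly in their range, and $\mathcal{N}$ is orientable. Their invariants depend only on the diffeomorphism type of $\mathcal{N}$ together with its normal data, all of which are independent of $q$; hence the embeddings produced for different $q$ are smoothly isotopic. Combined with the previous paragraph this yields pairs of embeddings that are smoothly isotopic but not Lagrangian isotopic. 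I expect the main obstacle to lie in the first block: verifying that the iterated wedge of the Fano pentagon stays Delzant and Fano with real moment-angle manifold precisely $\#_5(S^{2p-1}\times S^{3p-2})$, and pinning down the exact combination of normals that produces the clean value $N_{\mathcal{N}}=\gcd(p,q)$.
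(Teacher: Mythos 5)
Your overall architecture matches the paper's: the paper also obtains $\mathcal{N}$ by taking the pentagon's quadric system \eqref{pentagon3} and replacing each coordinate by a block of $p$ coordinates (which is your iterated wedge), inserting the parameter $q$ by letting $q$ of the coordinates in the third block enter the second quadric, identifying $\mathcal{R}=\#_5(S^{2p-1}\times S^{3p-2})$ via Theorem \ref{threequad}, proving embeddedness via $\Lambda_u=\Lambda$ and Lemma \ref{l}, and computing $\mu(r_1)=2p$, $\mu(r_2)=2p+q$, $\mu(r_3)=3p$ together with the areas $p\pi$, $(2p+q)\pi/2$, $3p\pi/2$ from \eqref{lagrmasl2} and \eqref{area}, giving monotonicity directly and $N_{L_q}=\gcd(2p,2p+q,3p)=\gcd(p,q)$ (your route through Theorem \ref{Fanos} is a legitimate substitute for the monotonicity part, since $\Gamma b=\delta$ with $b=(1,\dots,1)^T$, but you still need the explicit cycle computation to get the gcd). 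One inaccuracy along the way: once $q>0$ the system is \emph{not} an iterated wedge of the pentagon with a combinatorics-preserving shear — the Gale diagram acquires a sixth distinct point ($\lambda_3$ with multiplicity $q$ and $\lambda_4$ with multiplicity $p-q$ are genuinely different points), so the combinatorial type does change; what is preserved is only the diffeomorphism type of $\mathcal{R}$, and seeing this requires the deformation argument of Theorem \ref{threequad} (joining $\lambda_3$ and $\lambda_4$), not wedge bookkeeping. Also, triviality of the bundle needs both $p$ and $q$ even, not just $p$.

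The genuine gap is your last step. From Theorem \ref{Haef} you conclude that the Haefliger--Hirsch ``invariants depend only on the diffeomorphism type of $\mathcal{N}$ together with its normal data,'' hence that all the embeddings for different $q$ are smoothly isotopic. That is a misreading: the theorem puts the set of isotopy classes in bijection with $H_1(\mathcal{N},\mathbb{Z}_2)\cong\mathbb{Z}_2^3$ (here $\dim\mathcal{N}=5p$ is even and the fiber is simply connected), so there may be up to $8$ distinct smooth isotopy classes of embeddings of the \emph{same} manifold, and nothing in your argument computes or compares the classifying invariant of your particular embeddings. The paper sidesteps this by pigeonhole: for suitable $p$ (e.g.\ $p$ replaced by $96p$) the values $\gcd(p,q)$ over even $q$ realize $10>8$ distinct minimal Maslov numbers, so at least two of the pairwise non--Lagrangian-isotopic embeddings must land in the same smooth isotopy class. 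Note this also shows the final clause of the theorem is not available for arbitrary even $p$ with few divisors — you need $p$ with more than $8$ admissible even divisors — whereas your version claims smooth isotopy for every pair, which is both stronger than needed and unproven. To repair your write-up, replace the ``invariance'' argument by the counting argument, or else genuinely compute the Haefliger--Hirsch class of each embedding, which the paper never does.
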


\begin{theorem}\label{th5}
Let $P \subset \mathbb{R}^2$ be a Delzant and Fano $6$-gon and $L \subset \mathbb{C}^6$ be the corresponding Lagrangian. Then $L$ is diffeomorphic to a fiber bundle over $T^4$ with fiber an oriented surface of genus $17$. Moreover, $L$ is embedded monotone submanifold of $\mathbb{C}^6$. The fiber bundle is nonorientable and the minimal Maslov number of $L$ is equal to $1$.
\end{theorem}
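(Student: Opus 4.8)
The plan is to treat Theorem~\ref{th5} as a specialization of the general machinery already used for Theorems~\ref{Fanos}, \ref{ex1} and \ref{th3} to the combinatorics of a hexagon; no genuinely new ideas are needed, only the relevant counts. Monotonicity and embeddedness come for free: the Mironov--Panov--Kotelskiy construction attaches an embedded Lagrangian to every Delzant polytope, and since $P$ is Fano, Theorem~\ref{Fanos} gives at once that $L$ is monotone. To set up the rest, I would record the bundle structure in the form I will compute with. Writing the inner facet normals $\gamma_1,\dots,\gamma_n\in\mathbb{Z}^k$ as the columns of $\Lambda\in\mathrm{Mat}_{k\times n}(\mathbb{Z})$ and putting $\Gamma=\ker(\Lambda\colon\mathbb{Z}^n\to\mathbb{Z}^k)$, a lattice of rank $n-k$, the Lagrangian $L$ is the total space of a bundle over the base torus $T^{n-k}=\mathbb{R}^{n-k}/\Gamma$ with fiber the real moment-angle manifold $R=\mathbb{R}\mathcal{Z}_P$ and monodromy through $D=\Gamma/2\Gamma\cong(\mathbb{Z}/2)^{n-k}$ acting on $R$ via the coordinate $(\mathbb{Z}/2)^n$-action. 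For a hexagon $k=2$ and $n=6$, so the base is $T^4$ and $R$ is a surface, matching the statement.

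I would then compute the genus of $R$. Since $R$ is assembled from $2^n$ copies of $P$, with each $d$-dimensional face of $P$ (lying in $k-d$ facets) contributing $2^{\,n-k+d}$ cells, its Euler characteristic is
\[
\chi(R)=\sum_{d=0}^{k}(-1)^d f_d\,2^{\,n-k+d},
\]
where $f_d$ is the number of $d$-faces of $P$. For an $m$-gon one has $f_0=f_1=m$ and $f_2=1$, which collapses the sum to $\chi(R)=2^{m-2}(4-m)$; since $R$ is an orientable closed surface, its genus is $g=1-\chi(R)/2=1+2^{m-3}(m-4)$. Taking $m=6$ gives $\chi(R)=-32$ and $g=17$, exactly as claimed (and $m=5$ recovers the genus-$5$ fiber of Theorem~\ref{th3}).

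Both the nonorientability of the bundle and the value $N_L=1$ reduce to a single mod-$2$ computation on $\Gamma$. Each generator $\omega_i$ of $(\mathbb{Z}/2)^n$ acts on the surface $R$ locally as a reflection in the wall over the facet $F_i$, hence reverses orientation, so the orientation character of the monodromy is $v\mapsto(-1)^{\langle\mathbf{1},v\rangle}$ with $\mathbf{1}=(1,\dots,1)$; the bundle is therefore orientable precisely when $\langle\mathbf{1},\lambda\rangle$ is even for all $\lambda\in\Gamma$. The Maslov-index computation used for Theorem~\ref{ex1} evaluates the Maslov class on the loop in direction $\lambda\in\Gamma$ as $\langle\mathbf{1},\lambda\rangle$, so $N_L=\gcd\{\langle\mathbf{1},\lambda\rangle:\lambda\in\Gamma\}$. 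A Delzant Fano hexagon is unimodularly equivalent to the del~Pezzo-$6$ fan, with normals $(1,0),(0,1),(-1,1),(-1,0),(0,-1),(1,-1)$, so it suffices to exhibit one kernel vector: $\lambda=(1,-1,1,0,0,0)\in\Gamma$ satisfies $\langle\mathbf{1},\lambda\rangle=1$. This single vector simultaneously yields $N_L=\gcd\{\dots\}=1$ and, having odd coordinate sum, makes the orientation character nontrivial on $D$, so the bundle is nonorientable.

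The hexagon computation is short; the real work is in importing the two general formulas I have quoted, and this is where I expect the only difficulty. I would need to justify carefully, from the construction, that the monodromy orientation character is exactly the parity of $\langle\mathbf{1},\cdot\rangle$ and that the minimal Maslov number is $\gcd\{\langle\mathbf{1},\lambda\rangle\}$ in the monotone case; both should follow verbatim from the proofs of Theorems~\ref{Fanos} and \ref{ex1}, but they must be stated as lemmas rather than used informally. A secondary point is the appeal to uniqueness of the Delzant Fano hexagon up to unimodular equivalence, which licenses computing with one explicit normal fan; if one prefers to avoid it, the two parity facts can instead be read off directly from the defining inequalities of an arbitrary Fano hexagon, in complete parallel with the pentagon case of Theorem~\ref{th3}.
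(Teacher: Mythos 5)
Your proposal is correct and takes essentially the same approach as the paper: embeddedness and monotonicity from Theorem~\ref{Fanos}, the fibration $\mathcal{N}=\mathcal{R}\times_{D_{\Gamma}}T_{\Gamma}\to T^4$ with fiber the genus-$17$ surface (your Euler-characteristic count simply rederives Lemma~\ref{lemsurface}), and the Maslov and orientation data evaluated on base loops via formula~(\ref{lagrmasl2}) together with the parity argument of Lemma~\ref{orientation}. Your repackaging of both parities as $\langle\mathbf{1},\lambda\rangle$ over the kernel lattice, with the single relation $\lambda=(1,-1,1,0,0,0)$, is exactly equivalent to the paper's explicit computation with one model hexagon (where $\mu(r_4)=1$ gives $N_L=1$ and $\varepsilon_5,\varepsilon_6$ reverse orientation), and your appeal to uniqueness of the Delzant Fano hexagon up to unimodular equivalence only makes explicit a reduction the paper leaves tacit.
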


Our method allows to construct immersed monotone Lagrangian submanifolds. We need to consider Fano polytopes which are not Delzant. For instance,  we obtain the following result:

\begin{theorem}\label{th6}
Let $k$ be an arbitrary even integer greater than $3$. There exists a manifold $\mathcal{N}$, which can be immersed into $\mathbb{C}^5$ as a monotone Lagrangian with minimal Maslov number $k$. Manifold $\mathcal{N}$ is diffeomorphic to a fiber bundle over $T^3$ with fiber an oriented surface of genus $5$, where the fiber bundle is orientable. As a result, we get infinitely many monotone immersions  into $\mathbb{C}^5$ distinct up to Lagrangian isotopy.

\end{theorem}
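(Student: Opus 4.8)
The plan is to reproduce the pentagon construction underlying Theorem~\ref{th3}, but to deliberately sacrifice the Delzant condition in order to gain control over the Maslov class. Concretely, I would write down an explicit one-parameter family of reflexive (Fano) pentagons $P \subset \mathbb{R}^2$, presented as $\{x : \langle a_i, x\rangle + 1 \geq 0, \ i = 1,\dots,5\}$ with primitive $a_i \in \mathbb{Z}^2$, so that all five facets lie at lattice distance $1$ from the origin. This is exactly the Fano normalization which, by Theorem~\ref{Fanos}, forces the associated Lagrangian to be monotone. The pentagon is chosen so that at (at least) one vertex the two incident normals fail to span $\mathbb{Z}^2$: this is precisely the failure of the Delzant condition, and it is what turns the Mironov--Panov map into a self-transverse immersion rather than an embedding, the self-intersections occurring over the non-smooth vertices.

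Next I would run the Mironov--Panov construction verbatim on this $P$. Because $P$ is a pentagon ($n=5$ facets, $k=2$), the resulting Lagrangian $\mathcal{N} \looparrowright \mathbb{C}^5$ is the total space of a bundle over $T^{\,n-k}=T^3$ whose fiber is the real moment-angle manifold $\mathbb{R}\mathcal{Z}_P$; for an $m$-gon this fiber is the orientable surface of genus $1+(m-4)2^{m-3}$, which for $m=5$ is the genus-$5$ surface. Thus the diffeomorphism type of $\mathcal{N}$ is fixed by the combinatorial (pentagon) type alone and is independent of the continuous lattice data, while the monodromy of the bundle and the Maslov class do see the particular choice of normals. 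Monotonicity of the immersion should follow from the Fano property exactly as in the embedded case, since that argument is essentially local near each moment fiber and does not use injectivity of the map.

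The two remaining points --- orientability of the bundle and the value of the minimal Maslov number --- are where the parameters get pinned down. Writing $\Gamma$ for the matrix of the three relations $\sum_i \gamma_{ji}a_i = 0$ among the normals, the Maslov index of the $j$-th generator of $\pi_1(T^3)$ equals the entry-sum $\Sigma_j = \sum_i \gamma_{ji}$ by the Maslov formula established earlier, and the fiber loops contribute nothing; hence $N_{\mathcal{N}} = \gcd(\Sigma_1,\Sigma_2,\Sigma_3)$. Crucially, since $\mathcal{N}$ is \emph{immersed}, Oh's upper bound $N_L \leq n$ for embedded monotone Lagrangians does not apply, so large Maslov numbers are available. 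Orientability of the bundle is governed by the parity of the monodromy data (as in the criteria of Theorems~\ref{ex1} and~\ref{ex2}, ``orientable iff the relevant numbers are even''); requiring all $\Sigma_j$ even makes the bundle orientable and simultaneously forces $\gcd(\Sigma_1,\Sigma_2,\Sigma_3)=k$ to be even, which is the source of the parity hypothesis, and the lower bound $k>3$ comes from needing a genuine convex non-Delzant pentagon. Arranging the family so that $\gcd(\Sigma_1,\Sigma_2,\Sigma_3)=k$ for each prescribed even $k$ then gives $N_{\mathcal{N}}=k$. Since the minimal Maslov number is a Lagrangian-isotopy invariant while the diffeomorphism type of $\mathcal{N}$ is independent of $k$, the immersions obtained for distinct even $k>3$ are pairwise non-Lagrangian-isotopic, yielding the claimed infinite family.

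The main obstacle I anticipate is the simultaneous bookkeeping in the construction of the pentagon: one must exhibit five primitive normals that (i) bound a genuine convex pentagon, (ii) are reflexive with all $b_i=1$, (iii) violate unimodularity at a vertex so that the output is a bona fide immersion, (iv) have relation entry-sums of $\gcd$ exactly $k$, and (v) have monodromy of the correct parity for orientability. Verifying that a single explicit family meets all five constraints at once --- and checking that passing to an immersion corrupts neither the monotonicity argument nor the Maslov computation (the self-intersection locus of $\mathcal{N}$ is irrelevant to the boundary discs in $\mathbb{C}^5$ that measure the Maslov index of a loop) --- is the technical heart of the proof.
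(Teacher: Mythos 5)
Your overall architecture is the paper's: take a pentagon (so $n=5$, base $T^3$, fiber the genus-$5$ surface by Lemma~\ref{lemsurface}), deliberately break the Delzant condition so that Lemma~\ref{l} yields an immersion rather than an embedding, compute the Maslov indices of the torus loops from the row sums of $\Gamma$ via~(\ref{lagrmasl2}), and distinguish the immersions by the Lagrangian-isotopy invariant $N_L$. But there is a genuine gap at step (iv) of your own checklist, and it is exactly where the construction lives or dies. You insist the pentagon be \emph{reflexive}: primitive integral normals $a_i$ with all $b_i=1$. Reflexive polygons are classified --- there are exactly $16$ up to $GL_2(\mathbb{Z})$-equivalence --- so the Gale-dual matrix $\Gamma$, its row sums, and hence the achievable minimal Maslov numbers form a finite bounded set; you cannot realize $\gcd(\Sigma_1,\Sigma_2,\Sigma_3)=k$ for every even $k>3$. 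The one-parameter freedom you need does not exist under your constraints (i)--(ii). The paper creates that freedom by abandoning \emph{primitivity} of the normals, not merely smoothness of the vertices: its pentagon has $a_1=(1,0)$, $a_2=(0,1)$, $a_3=-(k-1)a_1$, $a_4=-(2k-1)a_2$, $a_5=-(k-3)a_1-(k+2)a_2$, i.e.\ scaled (non-primitive) normals, giving $\gamma_1=(k-1,0,k-3)^T$, $\gamma_2=(0,2k-1,k+2)^T$ and $\gamma_3,\gamma_4,\gamma_5$ the standard basis. The entries of $\Gamma$ grow linearly in $k$, the row sums are $t=(k,2k,2k)$, and $N_L=\gcd(k,2k,2k)=k$ is unbounded precisely because of this scaling; with primitive normals pinned at height $1$ this mechanism is unavailable.

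A second, smaller defect: you quote Theorem~\ref{Fanos} for monotonicity, but that theorem is stated for Delzant polytopes, and its definition of Fano presupposes Delzant together with primitive normals; the paper's pentagon satisfies neither hypothesis, so the theorem cannot be cited. What the paper does instead is rerun the computation by hand: Lemmas~\ref{areaform} and~\ref{class} (whose proof needs only connectedness of $\mathcal{R}$, guaranteed by irredundancy, not embeddedness) give $\psi^{*}(\lambda)(r_i)=\frac{\pi}{2}\delta_i$ and $\mu(r_i)=t_i$, and monotonicity follows from the identity $\delta=(k,2k,2k)=\Gamma(1,\dots,1)^T=t$ --- the ``Fano-like'' condition you correctly isolated, verified directly rather than quoted. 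Your remaining points are sound and agree with the paper: the orientability of the bundle for even $k$ (the involutions $\varepsilon_3,\varepsilon_4,\varepsilon_5$ each flip an even number of coordinates, as in Lemma~\ref{orientation}), the independence of the diffeomorphism type of $\mathcal{N}$ from $k$ for $k$ even, and the observation that self-intersections of the immersion affect neither the Maslov formula~(\ref{lagrmasl}) nor the area computation. To repair the proposal, replace ``reflexive with primitive normals'' by ``$\delta=\Gamma\mathbf{1}$ with normals allowed to be non-primitive integer multiples,'' and exhibit the explicit family, as the paper does.
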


In general, if a monotone Lagrangian $L \subset \mathbb{C}^n$ comes from a Delzant Fano polytope, $L$ is the total space of a fiber bundle over $T^{n-k}$, where the fiber is the real moment-angle manifold associated to $P$. The problem of existence of homeomorphic but not diffeomorphic real moment-angle manifolds is open. This problem leads to the following question:
\begin{center}
\emph{Does our method allow to construct homeomorphic \\but not diffeomorphic monotone embedded Lagrangians?}
\end{center}

\noindent \textbf{Acknowledgments.}
The author thanks Artem Kotelskiy, Mark McLean and Yuhan Sun for many helpful discussions.

This work was supported by the Russian Science Foundation under grant no.18-11-00316 and performed in L.D. Landau Institute for Theoretical Physics.

\section{Preliminary definitions and results}\label{definitions}

An immersion $\psi: L\rightarrow \mathbb{C}^n$ of an $n-$dimensional manifold $L$  is called Lagrangian if $\psi^{*}\omega$ = 0, where
\begin{equation*}
\omega = \frac{i}{2}\sum\limits_{j=1}^{n}dz^j \wedge d\overline{z}^j = \sum\limits_{j=1}^{n}dx^j \wedge dy^j, \quad z^j = x^j + iy^j.
\end{equation*}
We know that
\begin{equation*}
\omega = d\lambda = d(x^1dy^1 + ... +x^ndy^n).
\end{equation*}
Assume $\alpha \in H_1(L, \mathbb{Z})$. Then we define
\begin{equation*}
\lambda(\alpha) = \int\limits_{\alpha}\lambda.
\end{equation*}
\textbf{Definition}. The homomorphism $\lambda: H_1(L, \mathbb{Z}) \rightarrow \mathbb{R}$ is called the symplectic area class.
\\

Let $H$ be the mean curvature vector of $L \subset \mathbb{C}^n$, where $L$ is a Lagrangian submanifold. By $\omega_H$ denote the 1-form $\omega(H,\cdot)|_{TL}$. It is known that $d\omega_{H} = 0$ (see \cite{Oh}). We are not giving the general definition of the Maslov class. Instead we define the Maslov class in the following way:
\\
\\
\textbf{Definition}(see \cite{Kail} for details). The Maslov class $\mu$ is given by
\begin{equation}\label{meancurve}
\begin{gathered}
\mu: H_1(L, \mathbb{Z}) \rightarrow \mathbb{Z},\\
\mu(\alpha) =  \frac{1}{\pi}\int\limits_{\alpha}\omega_{H}.\\
\end{gathered}
\end{equation}

\textbf{Definition}. A Lagrangian $L \subset \mathbb{C}^n$ is called monotone if there exists a constant  $c > 0$ such that $\lambda(\alpha)=c\mu(\alpha)$ for all $\alpha \in \pi_1(L)$.
\\

It is known that there are no closed minimal submanifolds of $\mathbb{C}^n$. Instead of minimal submanifolds we can consider Hamiltonian-minimal (or H-minimal) submanifolds.
\\
\\
\textbf{Definition}(\cite{Oh}). A Lagrangian immersion $\psi : L \rightarrow \mathbb{C}^n$ is called \emph{H-minimal} if the variations of volume $\psi(L)$ along all Hamiltonian vector fields with compact supports vanish. In other words
\begin{equation*}
\frac{d}{dt}vol(\psi_{t}(L))|_{t=0} = 0,
\end{equation*}
where $\psi_{0}(L)=\psi(L)$, $\psi_t(L)$ is a deformation  of $\psi(L)$ along some Hamiltonian vector field.
\\
The notion of H-minimality was introduced by Oh \cite{Oh}.
\\
\\
\textbf{Definition}. Two embedded Lagrangian submanifolds $L_1,=\psi_1(L) \subset \mathbb{C}^n$ and $L_2=\psi_2(L) \subset \mathbb{C}^n$  are called isotopic if there exists a smooth map $h_t: L\times[0,1] \rightarrow \mathbb{C}^n$ such that $h_t$ is an embedding for any $t$ and $h_0(L)=L_1$, $h_1(L)=L_2$.
\\
\\
\textbf{Definition}. Two Lagrangian submanifolds $L_1=\psi_1(L) \subset \mathbb{C}^n$ and $L_2=\psi_2(L) \subset \mathbb{C}^n$  are called Lagrangian isotopic if there exists a smooth map $h_t: L\times[0,1] \rightarrow \mathbb{C}^n$ such that $h_t$ is a Lagrangian embedding for any $t$ and $h_0(L)=L_1$, $h_1(L)=L_2$.
\\

If Lagrangian submanofolds $L_1, L_2$ are Lagrangian isotopic, then they are smoothly isotopic.
\\
\\
\textbf{Definition.} Let $L \subset \mathbb{C}^n$ be an embedded (immersed) Lagrangian submanifold. A nonnegative generator $N_L$ of the subgroup $\mu(H_1(L, \mathbb{Z})) \subset \mathbb{Z}$ is called the minimal Maslov number, where $\mu$ is the Maslov class.
\\

Lagrangian isotopy preserves the number $N_L$.  This means that if $N_{L_1} \neq N_{L_2}$ for two embeddings $L_1=\psi_1(L), L_2=\psi_2(L)$, then $L_1$ is not Lagrangian isotopic to $L_2$.

In paper \cite{Hirsch} Haefliger and Hirsch classified smooth embeddings of compact n-manifolds into $\mathbb{C}^n$ up to smooth isotopy, where $n \geqslant 5$.

\begin{theorem}\label{Haef} (see \cite{Hirsch}) Let L be a closed, oriented, connected $n$-manifold and suppose that $\psi : L \rightarrow \mathbb{C}^n$ is an embedding, where $n \geqslant 5$. The isotopy classes of smooth embeddings are in bijection with the elements of
\begin{equation*}
\left\{
 \begin{array}{l}
H_1(L, \mathbb{Z}) \quad if \; \; \; n \;\; \; is \;\; \; odd \\
H_1(L, \mathbb{Z}_2) \quad if \; \; \; n \;\; \; is \;\; \; even
 \end{array}
\right.
\end{equation*}
\end{theorem}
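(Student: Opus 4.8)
The statement is the Haefliger--Hirsch classification, so the plan is to reduce the isotopy classification to an equivariant homotopy computation and then read off the two cohomology groups by Poincar\'e duality. Existence of at least one embedding (hence a basepoint) is Whitney's theorem: every closed $n$-manifold embeds in $\mathbb{R}^{2n}=\mathbb{C}^n$. To classify, I would attach to each embedding $f$ its deleted-product Gauss map, the $\mathbb{Z}_2$-equivariant map $\widetilde f\colon \widetilde L\to S^{2n-1}$, $\widetilde f(x,y)=(f(x)-f(y))/\norm{f(x)-f(y)}$, where $\widetilde L=L\times L\setminus\Delta$ carries the factor-swap involution and $S^{2n-1}$ the antipodal one. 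Since the codimension is $n$, the target dimension $m=2n$ satisfies the metastable inequality $2m\ge 3n+3$ (i.e.\ $n\ge 3$), so by the Haefliger--Weber theorem $f\mapsto[\widetilde f]$ is a bijection from smooth isotopy classes of embeddings $L\hookrightarrow\mathbb{C}^n$ onto the equivariant homotopy set $[\widetilde L,S^{2n-1}]_{\mathbb{Z}_2}$; the hypothesis $n\ge 5$ guarantees this is a complete invariant in the smooth category. It remains to compute this set.

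Equivariant maps $\widetilde L\to S^{2n-1}$ are the same as sections of the $S^{2n-1}$-bundle over the quotient $B=\widetilde L/\mathbb{Z}_2$. Because $\widetilde L$ is an \emph{open} $2n$-manifold (the diagonal, of codimension $n\ge 3$, has been deleted), $B$ has the homotopy type of a complex of dimension $\le 2n-1$, while the fibre $S^{2n-1}$ is $(2n-2)$-connected. Hence sections exist and any two differ by a single obstruction class, so the set of homotopy classes is a torsor over the equivariant cohomology group $H^{2n-1}_{\mathbb{Z}_2}(\widetilde L;\mathbb{Z}_{w})$ with coefficients in $\pi_{2n-1}(S^{2n-1})\cong\mathbb{Z}$ twisted by the action. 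Together with the Whitney basepoint this identifies the isotopy classes bijectively with this group, so the whole problem reduces to computing $H^{2n-1}_{\mathbb{Z}_2}(\widetilde L;\mathbb{Z}_w)$.

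To evaluate it I would use Poincar\'e duality on the open oriented $2n$-manifold $\widetilde L$ together with the transfer sequence of the double cover $\widetilde L\to B$. General position (codimension $n\ge 3$) gives $H_1(\widetilde L)\cong H_1(L\times L)\cong H_1(L)\oplus H_1(L)$, and duality exchanges degree $2n-1$ with degree $1$; the involution interchanges the two summands, so the equivariant group collapses to a single copy of $H_1(L)$ up to a coefficient determined by the twist $w$. The parity of $n$ enters precisely here: the swap involution multiplies the orientation of the oriented $2n$-manifold $\widetilde L$ by $(-1)^{n}$, so it is orientation-preserving iff $n$ is even. For $n$ odd the duality/transfer bookkeeping is untwisted and yields $H_1(L;\mathbb{Z})$, while for $n$ even the transfer introduces multiplication by $2$, leaving $2$-torsion, i.e.\ $H_1(L;\mathbb{Z}_2)$. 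This reproduces the stated bijection, and it uses orientability of $L$ both to apply Poincar\'e duality and to control the twist.

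The main obstacle is this last computation: correctly pinning down the twisted coefficient system $\mathbb{Z}_w$ and running the transfer/duality argument, since it is exactly the orientation sign $(-1)^n$ of the swap together with the $2$-torsion produced by the double-cover transfer that creates the $\mathbb{Z}$-versus-$\mathbb{Z}_2$ dichotomy. A cautionary sign of the subtlety is that the naive geometric invariant --- the algebraic count of double points of a generic concordance --- has the \emph{opposite} parity behaviour (well defined in $\mathbb{Z}$ for $n$ even, only mod $2$ for $n$ odd), so the classification group is genuinely a dual object and the bookkeeping must be done with care. The remaining point is surjectivity: every class must be realized by an actual embedding, which one arranges by a Whitney-type local modification of a fixed embedding inside a ball, changing its deleted-product invariant by a prescribed amount. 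One should also confirm that both the Haefliger--Weber bijection and this realization hold in the smooth category for $n\ge 5$, so that no exotic phenomena are overlooked.
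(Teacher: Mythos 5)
You should first be aware that the paper contains no proof of Theorem \ref{Haef} to compare against: it is imported verbatim from Haefliger--Hirsch \cite{Hirsch} and used as a black box. Your outline is in fact the route of the cited source itself (deleted-product Gauss map, Haefliger's metastable classification, equivariant obstruction theory, duality), and the framework half of it is sound: Whitney supplies a base embedding; with $m=2n$ the metastable inequalities hold comfortably for $n\geqslant 5$ (note the isotopy half of Haefliger--Weber needs $2m\geqslant 3n+4$, i.e.\ $n\geqslant 4$, not just $n\geqslant 3$); and since $B=\widetilde L/\mathbb{Z}_2$ is an open $2n$-manifold, homotopy classes of sections of the associated $S^{2n-1}$-bundle do form a torsor over a single primary difference group.

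However, as a proof the proposal has a genuine gap: that torsor statement merely reduces the theorem to computing the difference group, and this computation --- which is the entire content of \cite{Hirsch} --- is not carried out, while your heuristic for it is wrong in specifics. (i) The twist $w$ you leave undetermined is actually trivial: the monodromy acts on the fibre by the antipodal map of $S^{2n-1}$, whose degree is $(-1)^{2n}=+1$ for every $n$, so the group is $H^{2n-1}(B;\mathbb{Z})$ with \emph{constant} coefficients, and the only twist in the problem is the orientation character of $B$, nontrivial exactly when $n$ is odd; this makes your claim that the $n$ odd case is the ``untwisted'' one backwards. (ii) Poincar\'e duality on the open manifold $B$ lands in locally finite (Borel--Moore) homology, $H^{2n-1}(B;\mathbb{Z})\cong H_1^{lf}(B;\mathbb{Z}_{or})\cong H_1(\overline B,\partial\overline B;\mathbb{Z}_{or})$, where $\overline B$ is the complement of an open tube about the diagonal in $(L\times L)/\mathbb{Z}_2$ and $\partial\overline B$ is the unit tangent bundle of $L$ modulo the fibrewise antipode, an $\mathbb{RP}^{n-1}$-bundle over $L$. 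Your input $H_1(\widetilde L)\cong H_1(L)\oplus H_1(L)$ with the swap exchanging summands is an absolute computation and cannot determine this relative group: the $\mathbb{Z}$-versus-$\mathbb{Z}_2$ dichotomy is produced by the boundary/end terms in the exact sequence of the pair $(\overline B,\partial\overline B)$ (in particular by the image and torsion order of the $\mathbb{RP}^{n-1}$-fibre class), not by a transfer ``multiplication by $2$'' on absolute $H_1$, which you assert without justification. Until that relative computation is done, your argument establishes only a bijection of isotopy classes with the uncomputed group $H^{2n-1}(B;\mathbb{Z})$, not with $H_1(L;\mathbb{Z})$ or $H_1(L;\mathbb{Z}_2)$. (The closing paragraph on realizing classes by local Whitney modifications is also redundant: surjectivity is already contained in the Haefliger--Weber bijection you invoked.)
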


\section{Hamiltonian-minimal Lagrangian submanifolds}\label{Ham}

Mironov in \cite{MironovCn} found a very interesting method for constructing H-minimal Lagrangian submanifolds of $\mathbb{C}^n$. It turns out that the topology of constructed submanifolds can be highly complicated. But methods of toric topology give us a technique to study our submanifolds. We discuss toric topology in Section $\ref{torictopology}$. Let us briefly explain Mironov's method.
\\

Let $\mathcal{R}$ be a $k$-dimensional submanifold of $\mathbb{R}^n$ defined by the system of equations
\begin{equation}\label{equation}
\begin{gathered}
\left\{
 \begin{array}{l}
\gamma_{1,i}u_1^2 + ... + \gamma_{n,i}u_n^2 = \delta_i,
 \end{array}
\right.
i=1,...,n-k,
\end{gathered}
\end{equation}
where $\delta_j \in \mathbb{R}$, $\gamma_{i,j} \in \mathbb{Z}$. Let us assume that the integer vectors
\begin{equation}\label{vectors}
\gamma_j=(\gamma_{j,1},...,\gamma_{j,(n-k)})^T \in \mathbb{Z}^{n-k}, \quad j=1,...,n
\end{equation}
form a lattice $\Lambda$ in $\mathbb{R}^{n-k}$ of maximum rank. Let $\Gamma$ be the matrix with columns $\gamma_j$, $j=1,...,n$. The dual lattice $\Lambda^{*}$ is defined by
\begin{equation*}
\Lambda^{*}=\{\lambda^{*} \in \mathbb{R}^{n-k}| (\lambda^{*},\lambda) \in \mathbb{Z}, \lambda \in \Lambda\},
\end{equation*}
where $(\lambda^{*},\lambda)$ is the Euclidian product on $\mathbb{R}^{n-k}$. Let $D_{\Gamma}$ be a group
\begin{equation*}
D_{\Gamma} = \Lambda^{*}/2\Lambda^{*}\approx \mathbb{Z}_2^{n-k}.
\end{equation*}
Let us denote by $T_{\Gamma}$ an $(n-k)$-dimensional torus
\begin{equation}\label{torus}
T_{\Gamma} = (e^{i\pi(\gamma_1,\varphi)},...,e^{i\pi(\gamma_n,\varphi)}) \subset \mathbb{C}^n,
\end{equation}
where $\varphi=(\varphi_1,...,\varphi_{n-k})\in \mathbb{R}^{n-k}$ and $(\gamma_j,\varphi)=\gamma_{j,1}\varphi_1+...+\gamma_{j,(n-k)}\varphi_{n-k}$.

Consider a map
\begin{equation*}
\begin{gathered}
\widetilde{\psi}: \mathcal{R}\times T_{\Gamma} \rightarrow \mathbb{C}^n,\\
\widetilde{\psi}(u_1,...,u_n,\varphi) = (u_1e^{i\pi(\gamma_1,\varphi)},...,u_ne^{i\pi(\gamma_n,\varphi)}).
\end{gathered}
\end{equation*}

Let $\varepsilon \in D_{\Gamma}$ be a nontrivial element. We see that if $(u_1,...,u_n) \in \mathcal{R}$, then $(u_1\cos\pi(\varepsilon,\gamma_1),..,u_n\cos\pi(\varepsilon,\gamma_n))\in \mathcal{R}$ because $\cos\pi(\varepsilon,\gamma_i)=\pm 1$. We get that
\begin{equation*}
\widetilde{\psi}(u_1,...,u_n,\varphi) = \widetilde{\psi}(u_1\cos\pi(\varepsilon,\gamma_1),..,u_n\cos\pi(\varepsilon,\gamma_n), \varphi+\varepsilon).
\end{equation*}
Let us consider the quotient of $\mathcal{R}\times{T_{\Gamma}}$ by group $D_{\Gamma}$
\begin{equation}\label{mainmanifold}
\begin{gathered}
\mathcal{N} = \mathcal{R}\times_{D_{\Gamma}}T_{\Gamma},\\
(u_1,...,u_n,\varphi) \sim (u_1\cos\pi(\varepsilon,\gamma_1),..,u_n\cos\pi(\varepsilon,\gamma_n), \varphi + \varepsilon).
\end{gathered}
\end{equation}
The action of $D_{\Gamma}$ is free, since it is free on the second factor. Hence, $\mathcal{N}$ is a smooth $n$-manifold. So, we have a well-defined map
\begin{equation}\label{mainmap}
\begin{gathered}
\psi: \mathcal{N} \rightarrow \mathbb{C}^n,\\
\psi(u_1,...,u_n,\varphi) = (u_1e^{i\pi(\gamma_1,\varphi)},...,u_ne^{i\pi(\gamma_n,\varphi)}).
\end{gathered}
\end{equation}
Let us define an $(n-k)$-dimensional vector
\begin{equation}\label{maslovclass}
\gamma_1+...+\gamma_n = (t_1,...,t_{n-k})^T.
\end{equation}
\begin{theorem} (Mironov \cite{MironovCn}). The map $\psi$ is an immersion and the image is H-minimal Lagrangian. Moreover,
\begin{equation}\label{lagrmasl}
\frac{1}{\pi}\omega_{H} = t_1d\varphi_1+...+t_{n-k}d\varphi_{n-k},
\end{equation}
where $\varphi_i$ are coordinates on the torus as in $(\ref{torus})$ and $\omega_H$ is defined in $(\ref{meancurve})$.
\end{theorem}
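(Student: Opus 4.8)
The plan is to establish the four assertions in sequence, starting from the differential of $\psi$ and reserving the analytic input for the H-minimality at the end. Writing $\theta_j=\pi(\gamma_j,\varphi)$ and $(\gamma_j,d\varphi)=\sum_l\gamma_{j,l}\,d\varphi_l$, one computes directly that $dz^j=e^{i\theta_j}\bigl(du_j+i\pi u_j(\gamma_j,d\varphi)\bigr)$. For the Lagrangian condition I would substitute this into $\omega=\tfrac{i}{2}\sum_j dz^j\wedge d\bar z^j$ to obtain
\[
\psi^{*}\omega=\pi\sum_{l}\Bigl(\sum_{j}\gamma_{j,l}u_j\,du_j\Bigr)\wedge d\varphi_l .
\]
Differentiating the defining relations $\sum_j\gamma_{j,i}u_j^2=\delta_i$ of $\mathcal{R}$ gives $\sum_j\gamma_{j,i}u_j\,du_j=0$ on $\mathcal{R}$ for every $i$, so each inner sum vanishes and $\psi^{*}\omega=0$. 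For the immersion claim I would evaluate $d\psi$ on a tangent vector $\sum_m a_m\partial_{u_m}+\sum_l b_l\partial_{\varphi_l}$ subject to the tangency condition $\sum_m\gamma_{m,i}u_m a_m=0$: its $j$-th coordinate is $e^{i\theta_j}\bigl(a_j+i\pi u_j(\gamma_j,b)\bigr)$, whose vanishing forces $a_j=0$ and $u_j(\gamma_j,b)=0$ for all $j$. The latter says $b$ lies in the kernel of the transpose of the Jacobian of the map defining $\mathcal{R}$; since $\mathcal{R}$ is a smooth submanifold this Jacobian has rank $n-k$, so its transpose is injective and $b=0$. All of this is local and descends to $\mathcal{N}$ because $D_\Gamma$ acts freely.

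For the formula (\ref{lagrmasl}) I would use the Lagrangian-angle description of the mean curvature (see \cite{Kail}): for a Lagrangian $L\subset\mathbb{C}^n$ with holomorphic volume form $\Omega=dz^1\wedge\cdots\wedge dz^n$, the angle $\theta_L=\arg\psi^{*}\Omega$ satisfies $\omega_H=d\theta_L$. Here
\[
\psi^{*}\Omega=e^{\,i\pi(t,\varphi)}\bigwedge_{j}\bigl(du_j+i\pi u_j(\gamma_j,d\varphi)\bigr),\qquad t=\gamma_1+\cdots+\gamma_n .
\]
Evaluating the wedge factor on a basis of $T\mathcal{N}$ produces $(i\pi)^{n-k}$ times a real determinant, which is nonzero precisely by the immersion property, so its contribution to the argument is locally constant. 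Hence $\arg\psi^{*}\Omega=\pi(t,\varphi)+\mathrm{const}$, and differentiating yields $\tfrac1\pi\omega_H=\sum_l t_l\,d\varphi_l$, which is the asserted formula (\ref{lagrmasl}) with $t_l$ as in (\ref{maslovclass}).

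Finally, for H-minimality I would invoke Oh's first-variation formula \cite{Oh}, by which a Lagrangian is H-minimal if and only if $\omega_H$ is coclosed, equivalently $\Delta_g\theta_L=0$ for the induced metric $g$. I would then compute $g$ in the coordinates $(u,\varphi)$: the $u$- and $\varphi$-directions are orthogonal and the $\varphi$-block equals $g_{ll'}=\pi^2\sum_j\gamma_{j,l}\gamma_{j,l'}u_j^2$. The decisive point is that $g$ depends only on the $\mathcal{R}$-coordinates and not on $\varphi$. Since $\theta_L$ is affine-linear in $\varphi$, the Laplace--Beltrami operator collapses to
\[
\Delta_g\theta_L=\tfrac{1}{\sqrt{|g|}}\sum_l\partial_{\varphi_l}\Bigl(\sqrt{|g|}\,\pi\sum_{l'}g^{ll'}t_{l'}\Bigr)=0,
\]
because every coefficient under $\partial_{\varphi_l}$ is a function of $u$ alone.

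The Lagrangian and immersion computations are routine; the main obstacle is this last step. It relies on correctly setting up Oh's variational criterion and on the geometric fact that the induced metric is block diagonal and $\varphi$-independent — it is exactly this invariance, combined with the linearity of $\theta_L$ in $\varphi$, that forces harmonicity and hence H-minimality. One should also check that $\theta_L$, though defined only up to the $D_\Gamma$-action and the periods of the $\varphi_l$, has a globally well-defined differential on $\mathcal{N}$, which holds since $\sum_l t_l\,d\varphi_l$ is $D_\Gamma$-invariant.
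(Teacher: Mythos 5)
The paper does not actually prove this theorem: it is quoted from Mironov \cite{MironovCn}, with only the remark that ``Mironov found the Lagrangian angle and proved that the constructed submanifolds are H-minimal'' (and a pointer to \cite{kotel} for another viewpoint). So there is no in-paper proof to match against; judged on its own, your argument is correct and is essentially a reconstruction of Mironov's route as the paper summarizes it. Your computation of $\psi^{*}\omega$ and the rank argument for the immersion (the condition $u_j(\gamma_j,b)=0$ for all $j$ being exactly $b\in\ker (DF_u)^{T}$, with $DF_u$ of full rank $n-k$ by smoothness of $\mathcal{R}$) are sound, and they correctly absorb the points where $u_j=0$. The Lagrangian-angle step, $\psi^{*}\Omega=e^{i\pi(t,\varphi)}\bigwedge_j\bigl(du_j+i\pi u_j(\gamma_j,d\varphi)\bigr)$ with the wedge factor contributing $(i\pi)^{n-k}$ times a nonvanishing real determinant, gives $d\theta_L=\pi\sum_l t_l\,d\varphi_l$ and hence (\ref{lagrmasl}); note only that the nonvanishing of that determinant uses the Lagrangian condition (the holomorphic volume form restricts without zeros to Lagrangian planes, not to arbitrary $n$-planes), which you have already established at that point, and that the identity $\omega_H=d\theta_L$ from \cite{Kail} is sensitive to sign conventions, which is immaterial here. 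The H-minimality step is also the standard one: Oh's criterion $\delta\omega_H=0$ reduces, for $\omega_H=d\theta_L$, to $\Delta_g\theta_L=0$, and your observation that $g$ is block diagonal with both blocks depending on $u$ alone, while $\theta_L$ is affine in $\varphi$, does force harmonicity; the block diagonality is what kills the mixed terms $\partial_{x_m}\bigl(\sqrt{|g|}\,g^{x_m\varphi_{l'}}\pi t_{l'}\bigr)$ in the Laplacian, so it is worth stating that explicitly rather than leaving only the $\varphi$-derivative terms in the display. Your final remark on $D_\Gamma$-invariance of $\sum_l t_l\,d\varphi_l$ correctly handles descent to $\mathcal{N}$.
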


\textbf{Remark.} In fact, Mironov found the Lagrangian angle and proved that the constructed submanifolds are H-minimal. But to simplify our paper we are not giving the definition of the Lagrangian angle.

This theorem was proved by Mironov in 2003. All details of the proof can be found in \cite{MironovCn}. Another point of view can be found in \cite{kotel}.
\\

It was discussed in section $\ref{definitions}$ that $\frac{1}{\pi}\omega_{H}$ is equal to the Maslov class.
\\
\\
\textbf{Example.} Let us consider the quadric
\begin{equation*}
\gamma_1u_1^2 + u_2^2 + ... + u_{2n-1}^2 + u_{2n}^2 = \delta_1, \quad \gamma_1 \in \mathbb{Z}_{>0},
\end{equation*}
which defines a manifold diffeomorphic to $S^{2n-1}$. Then, $\mathcal{N}=S^{2n-1} \times_{D_\Gamma} S^1$. The lattice $\Lambda \subset \mathbb{R}$ is generated by numbers $\gamma_1$ and $1$. We can choose $1$ as a basis for $\Lambda $ and $\Lambda^{*}$.  A nontrivial element $\varepsilon \in D_{\Gamma} \approx \mathbb{Z}_2$ acts on $S^{2n-1}$ by
\begin{equation*}
\varepsilon(u_1, u_2, ..., u_{2n}) = ((-1)^{\gamma_1}u_1, -u_2, ..., -u_{2n}).
\end{equation*}
We can cut $S^1$ into two halves and assume that each part is  segment $[0,1]$. So, $\mathcal{N}$ is obtained from the cylinder $S^{2n-1} \times [0,1]$ by identification of points on the boundary, i.e. $(y,0) \sim (\varepsilon(y),1)$. It is easy to see that if $\gamma_1$ is odd, then $\varepsilon$ preserves the orientation. If $\gamma_1$ is even, then $\varepsilon$ doesn't preserve the orientation. We obtain that
\begin{equation*}
\begin{gathered}
\mathcal{N} = S^{2n-1} \times S^1 \quad if \quad \gamma_1 \quad is \quad odd \\
\mathcal{N} = K^{2n} \quad  if \quad \gamma_1 \quad is \quad even
\end{gathered}
\end{equation*}
where $K^{2n}$ is the generalized  Klein bottle. We have that
\begin{equation*}
\frac{1}{\pi}\omega_{H} = (\gamma_1 + (2n-1))d\varphi
\end{equation*}
and $\psi(\mathcal{N})$ is an immersed submanifold of $\mathbb{C}^{2n}$.

\vspace{.28in}

Let us note that $D_{\Gamma}$ acts freely on $T_{\Gamma}$. Therefore, the projection
\begin{equation*}
\mathcal{N} = \mathcal{R}\times_{D_{\Gamma}} T_{\Gamma} \rightarrow T_{\Gamma}/D_{\Gamma}
\end{equation*}
onto the second factor is a fiber bundle with fibre $\mathcal{R}$ over $(n-k)$-dimensional torus $T_{\Gamma}/D_{\Gamma} = T^{n-k}$.

Let us denote by $\mathbb{Z}<\gamma_1,....,\gamma_n>$ the set of integer linear combinations of vectors $\gamma_1, ..., \gamma_n$. For any $u=(u_1,...,u_n) \in \mathcal{R}$ we have a sublattice
\begin{equation*}
\Lambda_u = \mathbb{Z}<\gamma_k : u_k \neq 0> \subset \Lambda = \mathbb{Z}<\gamma_1,...,\gamma_n>.
\end{equation*}

\begin{lemma}\label{l}(\cite{Mirpan} Theorem 4.1). The map $\psi$ defines an embedding if and only if $\Lambda_u = \Lambda$ for any $u \in \mathcal{R}$.
\end{lemma}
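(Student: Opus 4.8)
The plan is to reduce the embedding property to plain injectivity and then to analyze the coincidence equation $\widetilde{\psi}(u,\varphi)=\widetilde{\psi}(u',\varphi')$ head on. Since $\mathcal{R}$, and hence $\mathcal{N}$, is compact and $\psi$ is already an immersion by Mironov's theorem, $\psi$ is an embedding if and only if it is injective; so I only need to decide precisely when two points of $\mathcal{N}$ share an image. Comparing moduli in the equations $u_je^{i\pi(\gamma_j,\varphi)}=u'_je^{i\pi(\gamma_j,\varphi')}$ forces $|u_j|=|u'_j|$, hence $u'_j=\pm u_j$. Writing $\theta:=\varphi-\varphi'$, for each index with $u_j\neq 0$ one gets $e^{i\pi(\gamma_j,\theta)}=u'_j/u_j\in\{\pm1\}$, so $(\gamma_j,\theta)\in\mathbb{Z}$ and $u'_j=(-1)^{(\gamma_j,\theta)}u_j$; the indices with $u_j=0$ give $u'_j=0$ and impose no condition on $\theta$. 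Thus a coincidence is governed by a vector $\theta$ pairing integrally with every $\gamma_j$ for which $u_j\neq 0$, i.e. $\theta$ lies in the dual lattice $\Lambda_u^*$ of $\Lambda_u=\mathbb{Z}\langle\gamma_j:u_j\neq 0\rangle$. (One also checks that the point $u'$ defined this way again lies in $\mathcal{R}$, since $\mathcal{R}$ depends only on the $u_j^2$.)

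Next I would identify when such a coincidence is already trivial in $\mathcal{N}$. Describing the torus as $T_\Gamma=\mathbb{R}^{n-k}/2\Lambda^*$ and the group $D_\Gamma=\Lambda^*/2\Lambda^*$ acting by translation, the two representatives are $D_\Gamma$-equivalent exactly when there is $\varepsilon\in\Lambda^*$ with $\varphi'\equiv\varphi+\varepsilon\pmod{2\Lambda^*}$ and $u'_j=(-1)^{(\varepsilon,\gamma_j)}u_j$. The first condition says $\theta\equiv-\varepsilon\pmod{2\Lambda^*}$, which (using $-\varepsilon\equiv\varepsilon$) is equivalent to $\theta\in\Lambda^*$; and once $\theta\in\Lambda^*$ the choice $\varepsilon=-\theta$ makes the sign conditions $(-1)^{(\varepsilon,\gamma_j)}=(-1)^{(\gamma_j,\theta)}$ match those forced above automatically. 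Hence $[u,\varphi]=[u',\varphi']$ in $\mathcal{N}$ precisely when $\theta\in\Lambda^*$, and consequently $\psi$ fails to be injective if and only if for some $u\in\mathcal{R}$ there is a $\theta\in\Lambda_u^*$ with $\theta\notin\Lambda^*$.

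Finally I would convert this into the stated lattice condition. Since $\Lambda_u\subseteq\Lambda$, dualizing reverses inclusions, giving $\Lambda^*\subseteq\Lambda_u^*$; a vector $\theta\in\Lambda_u^*\setminus\Lambda^*$ exists exactly when this inclusion is strict, which by biduality of full-rank lattices is equivalent to $\Lambda_u\neq\Lambda$. Assembling both directions yields: $\psi$ is injective, hence an embedding, if and only if $\Lambda_u=\Lambda$ for every $u\in\mathcal{R}$. I expect the only genuinely delicate points to be the bookkeeping of the quotient — in particular verifying that the signs dictated by $\widetilde\psi(u,\varphi)=\widetilde\psi(u',\varphi')$ are exactly those realized by the $D_\Gamma$-action — and the lattice duality in the case where $\Lambda_u$ is \emph{not} of full rank: there $\Lambda_u^*$ contains the whole orthogonal complement of $\mathrm{span}(\Lambda_u)$, so the required $\theta$ exists trivially and this case falls correctly under $\Lambda_u\neq\Lambda$.
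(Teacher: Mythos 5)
Your argument is correct, but note that the paper itself gives no proof of this lemma: it is quoted verbatim from Mironov--Panov (\cite{Mirpan}, Theorem 4.1), so the only comparison available is with that source, whose proof is essentially the computation you carried out --- analyze $\widetilde{\psi}(u,\varphi)=\widetilde{\psi}(u',\varphi')$, observe that coincidences are governed by $\theta=\varphi-\varphi'$ pairing integrally with $\{\gamma_j : u_j\neq 0\}$, identify the $D_{\Gamma}$-trivial coincidences with $\theta\in\Lambda^{*}$, and conclude via $\Lambda^{*}\subsetneq\Lambda_u^{*}\Leftrightarrow\Lambda_u\neq\Lambda$. Your bookkeeping at the delicate spots is right: the sign condition $u'_j=(-1)^{(\gamma_j,\theta)}u_j$ is automatically realized by $\varepsilon=-\theta$ once $\theta\in\Lambda^{*}$, and the non-full-rank case of $\Lambda_u$ correctly produces $\theta$ in the orthogonal complement. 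The one point to flag is your opening reduction ``injective immersion of a compact manifold is an embedding'': in the generality of Section 3 the intersection of quadrics $\mathcal{R}$ need not be compact (it is compact exactly when the associated polyhedron is a polytope, as in all the paper's applications), so to cover the general case you should instead note that $\psi$ is proper --- the moduli $\abs{z_j}=\abs{u_j}$ confine $u$ to a bounded closed subset of $\mathcal{R}$ and $T_{\Gamma}$ is compact, so preimages of compact sets are compact --- after which injectivity again suffices.
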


Lemma $\ref{l}$ says that in the example considered above manifold $\psi(\mathcal{N})$  is embedded if and only if $\gamma_1 = 1$. Indeed, $\Lambda_{(\sqrt{\frac{\delta_1}{\gamma_1}},0,...,0)} \neq \Lambda$ if $\gamma_1 \neq 1$.
\\

What can we say about the topology of $\mathcal{N}$? In fact, the topology of $\mathcal{N}$ can be highly complicated. In section $\ref{torictopology}$ we use methods of toric topology to study $\mathcal{N}$.

\section{Toric topology and intersection of quadrics}\label{torictopology}

In this section we discuss toric topology and its applications. For more details we refer our reader to paper of Panov \cite{Panov} (Sections 2,3,12). Much more details can be found in book \cite{torictop}.
\\

A convex \emph{polyhedron} $P$ is an intersection of finitely many halfspaces in $\mathbb{R}^k$. Bounded polyhedra are called \emph{polytopes}.

A \emph{supporting hyperplane} of $P$ is a hyperplane $H$ which has common points with $P$ and for which the polyhedron is contained in one of the two closed half-spaces determined by $H$. The intersection $P\cap H$ with a supporting hyperplane is called a face of the polyhedron. Zero-dimensional faces are called \emph{vertices}, one-dimensional faces are called \emph{edges}, and faces of codimension one are called \emph{facets}.

Consider a system of $n$ linear inequalities defining a convex polyhedron in $\mathbb{R}^k$
\begin{equation}\label{polytope}
P_{A,b}=\{x\in \mathbb{R}^k: <a_i,x>+b_i \geqslant 0 \quad for \quad i=1,...,n  \},
\end{equation}
where $<\cdot,\cdot>$ is the standard scalar product on $\mathbb{R}^k$, $a_i \in \mathbb{R}^k$, and $b_i \in \mathbb{R}$. By $b$ denote a vector $b=(b_1,...,b_n)^T$, $x=(x_1,...,x_k)^T$ and by $A$ the $k\times n$ matrix whose columns are the vectors $a_i$. Then, our polyhedron can be written in the following form:
\begin{equation*}
P_{A,b}=\{x\in \mathbb{R}^k: (A^Tx + b)_i \geqslant 0 \quad for \quad i=1,...,n\}.
\end{equation*}

\textbf{Definition.}  We say that (\ref{polytope}) is \emph{simple} if exactly $k$ facets meet at each vertex. We say that (\ref{polytope}) is \emph{generic} if for any vertex $x\in P$ the normal vectors $a_i$ of the hyperplanes containing $x$ are linearly independent.
\\

Assume that the vectors $a_1,...,a_n$ span $\mathbb{R}^k$.  By definition, we put
\begin{equation}\label{polytope2}
\begin{gathered}
i_{A,b} : \mathbb{R}^k \rightarrow \mathbb{R}^n, \\
i_{A,b}(x)=A^Tx + b = (<a_1, x> + b_1,...,<a_n, x> + b_n)^T.
\end{gathered}
\end{equation}
Then, the image $i_{A,b}(\mathbb{R}^k)$ is given by
\begin{equation}\label{polsys}
\begin{gathered}
i_{A,b}(\mathbb{R}^k)= \{u \in \mathbb{R}^n : \Gamma u = \Gamma b \},\\
\Gamma A^T = 0, \quad u=(u_1,...,u_n)^T,
\end{gathered}
\end{equation}
where $\Gamma $ is $(n-k)\times n$-matrix whose rows form a basis of linear relations between the vectors $a_i$. The set of columns $\gamma_1,...,\gamma_n$ of $\Gamma$ is called a \emph{Gale dual} configuration of $a_1,...,a_n$. Each of the matrices $A$ and $\Gamma$ determines the other uniquely up to multiplication by an invertible matrix from the left. We have
\begin{equation}\label{polytopeemb}
i_{A,b}(P) = i_{A,b}(\mathbb{R}^k)\cap \mathbb{R}^n_{+}.
\end{equation}

Let us describe the correspondence between the intersection of quadrics and polyhedra. Replacing $u_i$ by $u_i^2$ in (\ref{polsys}) we get $(n-k)$ quadrics which define a subset in $\mathbb{R}^n$.

Now assume that we have
\begin{equation}\label{quadrics}
\mathcal{R}_{\Gamma, \delta} = \{u\in \mathbb{R}^n : \gamma_{1,i}u_1^2 + ... + \gamma_{n,i}u_n^2 = \delta_i, \quad i=1,...,n-k, \}.
\end{equation}
The coefficients of the quadrics define $(n-k)\times n$ matrix $\Gamma=(\gamma_{j,k})$. The group $\mathbb{Z}_{2}^n$ acts on $\mathcal{R}_{\Gamma, \delta}$ by
\begin{equation*}
\varepsilon\cdot (u_1,...,u_n) = (\pm u_1,...,\pm u_n),
\end{equation*}
The quotient $\mathcal{R}_{\Gamma, \delta}/\mathbb{Z}_{2}^n$ can be identified with the set of nonnegative solutions of the system
\begin{equation*}
\left\{
 \begin{array}{l}
\gamma_{1,1}u_1 + ... + \gamma_{n,1}u_n = \delta_1\\
\ldots \quad \quad \\
\gamma_{1,(n-k)}u_1 + ... + \gamma_{n,(n-k)}u_k = \delta_{n-k}
 \end{array}
\right.
\end{equation*}
And we get the same system as in (\ref{polsys}) and (\ref{polytopeemb}). Solving the homogeneous version of the system above we get the matrix $A$. So, rows of matrix $\Gamma$ form a basis of linear relations between the vectors $a_i$. Then, we can construct a polytope (\ref{polytope}), where $(b_1,...,b_n)$ is an arbitrary solution of the linear system above.

We obtain that a polyhedron defines a system of quadrics and a system of quadrics defines a polyhedron.
\\

\textbf{Definition.} It may happen that some of the inequalities can be removed from the presentation without changing $P_{A,b}$. Such inequalities are called \emph{redundant}. A presentation without redundant inequalities is called \emph{irredundant}.

\begin{theorem}(\cite{Panov} Theorem 3.5 and Chapter 12, \cite{Mirpan}).
\\
\textbf{1)} Assume that we have a polyhedron defined by
\begin{equation*}
P_{A,b}=\{x\in \mathbb{R}^k: \quad <a_i,x>+b_i \geqslant 0 \quad for \quad i=1,...,n  \},
\end{equation*}
where $a_1,...,a_n$ span $\mathbb{R}^k$. And
\begin{equation*}
\mathcal{R}_{\Gamma, \delta} = \{u\in \mathbb{R}^n : \gamma_{1,i}u_1^2 + ... + \gamma_{n,i}u_n^2 = \delta_i, \quad i=1,...,n-k, \}
\end{equation*}
is the corresponding intersection of quadrics. Then columns of the system $\gamma_1,...,\gamma_n$ span $\mathbb{R}^{n-k}$. The intersection of quadrics is defined uniquely up to a linear isomorphism of $\mathbb{R}^{n-k}$, and $\mathcal{R}_{\Gamma, \delta}$ defines $P_{A,b}$ uniquely up to an isomorphism of $\mathbb{R}^k$. Also, $\Gamma b = \delta$.

\textbf{2)} The intersection of quadrics $\mathcal{R}_{\Gamma, \delta}$ is nonempty and smooth if and only if the presentation $P_{A,b}$ is generic.

\textbf{3)} The intersection of quadrics $\mathcal{R}_{\Gamma, \delta}$ is connected if and only if the presentation $P_{A,b}$ is irredundant.
\end{theorem}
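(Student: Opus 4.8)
The three statements all rest on the Gale-duality dictionary relating the matroid of $A$ to that of $\Gamma$, so the plan is to set that dictionary up once and then read off each part. Part (1) is pure linear algebra: since $a_1,\dots,a_n$ span $\mathbb{R}^k$ the matrix $A$ has rank $k$, hence $\ker A$ has dimension $n-k$; the rows of $\Gamma$ are by definition a basis of $\ker A$, so $\operatorname{rank}\Gamma = n-k$ and the columns $\gamma_1,\dots,\gamma_n$ span $\mathbb{R}^{n-k}$. Replacing this basis by another amounts to a left multiplication $\Gamma\mapsto M\Gamma$ with $M\in GL_{n-k}(\mathbb{R})$, which sends $\gamma_j\mapsto M\gamma_j$; thus the quadric configuration, and with it $\mathcal{R}_{\Gamma,\delta}$, is well defined up to a linear automorphism of $\mathbb{R}^{n-k}$, and symmetrically $A$, hence $P_{A,b}$, is recovered up to an automorphism of $\mathbb{R}^k$. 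Finally $\Gamma b=\delta$ is immediate: the affine plane $i_{A,b}(\mathbb{R}^k)=\{u:\Gamma u=\Gamma b\}$ of (\ref{polsys}) becomes, under the substitution $u_j\mapsto u_j^2$, exactly the system defining $\mathcal{R}_{\Gamma,\delta}$, so $\delta=\Gamma b$.

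For part (2) I would use the Jacobian criterion for the functions $f_i(u)=\sum_j\gamma_{j,i}u_j^2-\delta_i$. Their gradients have $j$-th entries $2\gamma_{j,i}u_j$, so they are linearly dependent at $u$ precisely when some nonzero $\lambda\in\mathbb{R}^{n-k}$ satisfies $u_j\langle\lambda,\gamma_j\rangle=0$ for all $j$; writing $Z=\{j:u_j=0\}$, regularity at $u$ is therefore equivalent to $\{\gamma_j:j\notin Z\}$ spanning $\mathbb{R}^{n-k}$. By Gale (matroid) duality this spanning condition is equivalent to linear independence of the complementary family $\{a_i:i\in Z\}$. Under the identification of $\mathcal{R}_{\Gamma,\delta}/\mathbb{Z}_2^n$ with the nonnegative solution set, i.e. with $P$ via (\ref{polytopeemb}), the set $Z$ is exactly the collection of facets $F_i$ passing through the corresponding point of $P$, the extremal cases being the vertices. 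Hence $\mathcal{R}_{\Gamma,\delta}$ is smooth iff the normals at every vertex are linearly independent, which is genericity; nonemptiness of $\mathcal{R}_{\Gamma,\delta}$ corresponds to nonemptiness of $P$, so the two conditions match.

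For part (3) I would pass to the quotient map $\mathcal{R}_{\Gamma,\delta}\to\mathcal{R}_{\Gamma,\delta}/\mathbb{Z}_2^n\cong P$ and present $\mathcal{R}_{\Gamma,\delta}$ as $2^n$ copies of $P$, labelled by sign vectors $\varepsilon\in\mathbb{Z}_2^n$, gluing the copies $\varepsilon$ and $\varepsilon+e_i$ along $F_i$ whenever $F_i$ is a genuine (that is, $(k-1)$-dimensional) facet, since then its relative interior consists of points with $u_i=0$ and $u_j\neq0$ for $j\neq i$, across which only the sign of $u_i$ changes. Connectivity of $\mathcal{R}_{\Gamma,\delta}$ then reduces to connectivity of the graph on $\mathbb{Z}_2^n$ whose edges are the flips $e_i$ available for nonempty facets, and this graph fills the whole hypercube iff every coordinate can be flipped, i.e. iff every $F_i$ is nonempty. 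Working in the generic (smooth) setting supplied by part (2), an inequality is redundant iff its facet is empty: if $F_i\neq\emptyset$ then Farkas' lemma produces a linear relation among the binding normals at a point of $F_i$, contradicting genericity. Thus irredundant is equivalent to every $F_i$ being nonempty, hence to $\mathcal{R}_{\Gamma,\delta}$ being connected.

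The main obstacle is part (3). The clean hypercube-graph picture is only literally correct once one knows that a redundant inequality has an \emph{empty} facet: without genericity a redundant inequality may still meet $P$ in a lower-dimensional face, and then — as is shown by the quadric cone appearing at such a singular point — one can route a path through its apex and flip the offending sign anyway, so $\mathcal{R}_{\Gamma,\delta}$ can be connected despite redundancy. The genericity input from part (2), through the Farkas argument above, is precisely what excludes this, which is why part (3) must be read in tandem with part (2). A secondary technical point is the Gale-duality equivalence invoked in part (2), which I would isolate and prove as a standalone lemma on dual linear matroids, stating that $\{a_i\}_{i\in Z}$ is independent if and only if $\{\gamma_j\}_{j\notin Z}$ spans $\mathbb{R}^{n-k}$.
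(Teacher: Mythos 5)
Your proposal is correct, and there is nothing in the paper to compare it against: the paper states this theorem purely as a citation to \cite{Panov} (Theorem 3.5 and Chapter 12) and \cite{Mirpan}, giving no proof of its own. Your argument is essentially the standard proof from those sources: the rank/kernel duality for part (1); for part (2) the Jacobian computation, which reduces regularity at $u$ to the condition $u_j\langle\lambda,\gamma_j\rangle=0$ forcing $\lambda=0$, i.e.\ to $\{\gamma_j : j\notin Z\}$ spanning $\mathbb{R}^{n-k}$, combined with the dual-matroid lemma that this is equivalent to independence of $\{a_i : i\in Z\}$; and for part (3) the decomposition of $\mathcal{R}_{\Gamma,\delta}$ into $2^n$ chambers homeomorphic to $P$, glued according to which sign flips are realized over nonempty intersections $P\cap H_i$. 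You also correctly isolate the one genuinely delicate point: without genericity a redundant inequality may still meet $P$ in a face of codimension at least two, so the Farkas argument, which under genericity converts redundancy plus $P\cap H_i\neq\emptyset$ into a linear dependence among active normals, is exactly what makes the equivalence in (3) work. Two small spots to tighten. First, in part (1) the vector $b$ is recovered from $\delta$ only modulo $\ker\Gamma=\mathrm{im}\,A^T$, and adding $A^Tx_0$ to $b$ translates $P$ by $-x_0$; so ``isomorphism of $\mathbb{R}^k$'' must be read as affine, which is how the cited references state it. Second, the paper's definition of generic imposes independence only at the \emph{vertices}, while your Jacobian criterion requires it at every point of $P$; you should add the one-line remark that every point of a polytope lies in a face containing a vertex whose active set $Z$ is larger, so independence at vertices implies independence everywhere (for unbounded polyhedra without vertices this step needs separate care).
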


\textbf{Definition}. Let us assume that $\mathbb{Z}<a_1,....,a_n>$ defines a lattice, where $\mathbb{Z}<a_1,....,a_n>$ is the set of integer linear combinations of vectors $a_1, ..., a_n$. Polyhedron $P$ is called \emph{Delzant} if it is simple and for any vertex $x \in P$ the vectors $a_i$ normal to the facets meeting at $x$ form a basis for the lattice $\mathbb{Z}<a_1,....,a_n>$.
\\

\textbf{Definition}. A Delzant polytope $P$ is called \emph{Fano} if it can be defined by
\begin{equation*}
P_{A,b}=\{x\in \mathbb{R}^k: \langle a_i,x \rangle + c \geq  0 \quad for \quad i=1,...,n  \},
\end{equation*}
where  each vector $a_i \in \mathbb{Z}^k$ is the primitive integral interior normal to the corresponding facet. In other words, $c=b_1=...=b_n$.
\\

In the previous section we constructed map $\psi : \mathcal{N}=\mathcal{R}\times_{D_{\Gamma}} T_{\Gamma} \rightarrow \mathbb{C}^n$ using intersection of quadrics. Lemma $\ref{l}$ says that $\psi$ is  an embedding if and only if $\Lambda_u = \Lambda$. Lemma $\ref{l}$ is equivalent to the following theorem:

\begin{theorem}\label{embdelzant}(\cite{Mirpan})
The map $\psi$ defines an embedding if and only if the polyhedron $P$ corresponding to system (\ref{equation}) is Delzant.
\end{theorem}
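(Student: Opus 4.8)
The plan is to reduce the statement to Lemma~\ref{l} and then translate its lattice condition into the Delzant property through the Gale-duality dictionary between the quadrics $\mathcal{R}$ and the polytope $P$. By Lemma~\ref{l}, the map $\psi$ is an embedding if and only if $\Lambda_u = \Lambda$ for every $u \in \mathcal{R}$, so it suffices to show that this equality holds for all $u$ exactly when $P$ is Delzant. First I would record the dictionary supplied in Section~\ref{torictopology}: passing to the quotient $\mathcal{R}/\mathbb{Z}_2^n$ identifies $u\in\mathcal{R}$ (via $u_i \mapsto u_i^2$) with a point of $P\cong i_{A,b}(P)$, under which the coordinate $u_i$ vanishes precisely when that point lies on the $i$-th facet $F_i = \{x : \langle a_i,x\rangle + b_i = 0\}$. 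Hence the vanishing set $I(u) = \{i : u_i = 0\}$ ranges exactly over those index sets $I$ for which the face $\bigcap_{i\in I}F_i$ is nonempty, and $\Lambda_u = \mathbb{Z}\langle \gamma_j : j\notin I(u)\rangle$.

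Next I would isolate the role of vertices. If $P$ is not simple, some vertex lies on more than $k$ facets, so a corresponding $u$ has $\abs{I(u)} > k$ and $\{\gamma_j : j\notin I(u)\}$ consists of fewer than $n-k$ vectors; being of rank $< n-k$, it cannot generate the rank-$(n-k)$ lattice $\Lambda$, so $\Lambda_u\neq\Lambda$ and $\psi$ fails to embed. This forces $P$ to be simple. When $P$ is simple, every vertex gives $\abs{I(u)} = k$ and exactly $n-k$ nonzero coordinates, and I claim the vertex points carry the only genuine constraints: for a point $u$ lying over a positive-dimensional face $G$, choose a vertex $v$ in the closure of $G$; then $I(v)\supseteq I(u)$, hence $\{\gamma_j : j\notin I(v)\}\subseteq\{\gamma_j : j\notin I(u)\}$, and if the smaller (vertex) set already generates $\Lambda$ then so does the larger. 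Thus the requirement $\Lambda_u=\Lambda$ for all $u\in\mathcal{R}$ reduces to the same equality at the vertices.

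The heart of the argument is then the integral Gale-duality equivalence at a vertex: for an index set $I$ with $\abs{I}=k$, the set $\{\gamma_j : j\notin I\}$ is a $\mathbb{Z}$-basis of $\Lambda$ if and only if $\{a_i : i\in I\}$ is a $\mathbb{Z}$-basis of the lattice $N = \mathbb{Z}\langle a_1,\dots,a_n\rangle$. I would prove this from the split short exact sequence $0 \to N^{*} \xrightarrow{A^{T}} \mathbb{Z}^n \xrightarrow{\Gamma} \Lambda \to 0$ dual to $0 \to \Lambda^{*}\xrightarrow{\Gamma^{T}} \mathbb{Z}^n \xrightarrow{A} N \to 0$, the latter expressing that the rows of $\Gamma$ form a $\mathbb{Z}$-basis of the (saturated) relation lattice $\ker A$. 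Splitting off the coordinate subspace indexed by $I$ shows that $\{a_i : i\in I\}$ is a lattice basis iff $\mathbb{Z}^n = \mathbb{Z}^{I}\oplus\mathrm{im}\,\Gamma^{T}$, while $\{\gamma_j : j\notin I\}$ is a lattice basis iff $\mathbb{Z}^n = \mathbb{Z}^{I^c}\oplus\mathrm{im}\,A^{T}$; the complementary-minor identity for the two blocks of the unimodular integral matrix $\left(\begin{smallmatrix} A \\ \Gamma\end{smallmatrix}\right)$ then forces the two ``determinant $\pm 1$'' conditions to coincide. Since demanding that $\{a_i : i\in I\}$ be a lattice basis of $N$ at every vertex is precisely the Delzant condition, combining this equivalence with the vertex reduction of the previous paragraph yields the theorem in both directions.

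I expect this last, purely lattice-theoretic step to be the main obstacle: the linear-algebra version of Gale duality (complementary subsets are simultaneously \emph{linearly independent}) is immediate, but upgrading it to the \emph{integral} statement that complementary maximal minors of $A$ and $\Gamma$ agree up to sign -- so that one is unimodular exactly when the other is -- is where the real work lies, and it is what ties the basis condition for the $a_i$ to the basis condition for the $\gamma_j$. The topological and combinatorial ingredients (the face/vanishing-pattern dictionary, the non-simple rank count, and the reduction from arbitrary faces to vertices) are routine once the correspondence of Section~\ref{torictopology} is in place.
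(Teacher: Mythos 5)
Your overall route is exactly the one the paper intends: the paper offers no independent proof of Theorem~\ref{embdelzant}, stating only that it is equivalent to Lemma~\ref{l} and citing \cite{Mirpan}, and your proposal is a correct fleshing-out of that equivalence. The face/vanishing-pattern dictionary, the rank-count ruling out non-simple vertices, and the reduction from arbitrary faces to vertices (via $I(u)\subseteq I(v)$ for a vertex $v$ in the closure of the face through $u$, so the generating set at $v$ is the smaller one) are all sound, and you correctly identify the integral Gale-duality equivalence at vertices as the crux.

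There is, however, one concretely wrong sub-step: the matrix $\left(\begin{smallmatrix} A \\ \Gamma \end{smallmatrix}\right)$ is \emph{not} unimodular in general, so the complementary-minor identity cannot be extracted from it the way you propose. Already for $n=2$, $k=1$, take the circle $u_1^2+u_2^2=\delta$ (a Delzant segment): then $\Gamma = (1 \;\; 1)$, the saturated relation lattice has basis $(1,-1)$, so $A=(1 \;\,{-1})$, and $\det\left(\begin{smallmatrix} 1 & -1 \\ 1 & 1 \end{smallmatrix}\right)=2$. Fortunately your own split-sequence framework repairs this without any identity relating minors of $A$ to minors of $\Gamma$: after normalizing so that $\Lambda=\mathbb{Z}^{n-k}$ and the rows of $A$ are a $\mathbb{Z}$-basis of $\ker\Gamma\cap\mathbb{Z}^n$ (whence $N=A(\mathbb{Z}^n)$ is automatically a lattice, and Delzantness is independent of this choice), your criterion $\{a_i: i\in I\}$ is a basis of $N$ $\Leftrightarrow$ $\mathbb{Z}^n=\mathbb{Z}^{I}\oplus \mathrm{im}\,\Gamma^{T}$ closes the loop directly: the latter holds iff the $n\times n$ matrix with columns $e_i$ ($i\in I$) and the rows of $\Gamma$ has determinant $\pm1$, and Laplace expansion along the $e_i$ columns shows this determinant is $\pm\det(\gamma_j)_{j\notin I}$, i.e.\ it holds iff $\{\gamma_j: j\notin I\}$ is a basis of $\Lambda$. (Alternatively, the true minor identity $\det A_I=\pm\det\Gamma_{I^c}$ with a single global sign does hold under this normalization, but its proof goes through primitivity of the two Plücker vectors --- saturation of $\mathrm{im}\,A^T$ and surjectivity of $\Gamma$ --- not through unimodularity of the stacked matrix.) With that substitution your argument is complete and agrees in substance with the Mironov--Panov proof the paper refers to.
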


What can we say about the topology of $\mathcal{R}$? First, let us mention the following lemma:

\begin{lemma}\label{lemsurface}(see \cite{torictop} Proposition 4.1.8) System of quadrics associated to $m-$gon defines an oriented surface of genus $g = 1 + (m-4)2^{m-3}$, where $m \geqslant 5$.
\end{lemma}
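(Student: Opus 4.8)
The plan is to realize $\mathcal{R}$ as the real moment-angle manifold over the $m$-gon $P$ and to read off both its Euler characteristic and its orientability from an explicit combinatorial model. Here $k=2$ and $n=m$, so $\mathcal{R}=\mathcal{R}_{\Gamma,\delta}$ is the intersection of $m-2$ quadrics in $\mathbb{R}^m$; since the $m$-gon presentation is generic and irredundant, Theorem 3.5 of \cite{Panov} (parts 2 and 3) guarantees that $\mathcal{R}$ is a closed connected smooth $2$-manifold, i.e. a surface. The structural input I would use is the identification $\mathcal{R}/\mathbb{Z}_2^m \cong P$ recorded in Section \ref{torictopology}: the coordinate reflections define a $\mathbb{Z}_2^m$-action whose quotient map $\mathcal{R}\to P$ sends a point with exactly $c$ vanishing coordinates onto the relative interior of the codimension-$c$ face cut out by the corresponding facets. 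The stabilizer of such a point is precisely the subgroup $\mathbb{Z}_2^c\subset\mathbb{Z}_2^m$ generated by the reflections in those $c$ facets, so $\mathcal{R}$ is glued from $2^m$ copies $P_\varepsilon$ (indexed by $\varepsilon\in\mathbb{Z}_2^m$) of $P$, where $P_\varepsilon$ and $P_{\varepsilon+e_i}$ are identified along the copy of the facet $F_i$.

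First I would build a CW structure on $\mathcal{R}$ from the faces of $P$. By the stabilizer computation above, the relative interior of a codimension-$c$ face has exactly $2^{m-c}$ preimages in $\mathcal{R}$. Applying this to the single $2$-face, the $m$ edges, and the $m$ vertices of the $m$-gon yields $2^m$ open $2$-cells, $m\,2^{m-1}$ open $1$-cells, and $m\,2^{m-2}$ vertices, whence
\begin{equation*}
\chi(\mathcal{R}) = m\,2^{m-2} - m\,2^{m-1} + 2^{m} = 2^{m-2}(4-m).
\end{equation*}

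Next I would establish orientability by a parity argument. Fix an orientation $o$ on $P$ and orient the copy $P_\varepsilon$ by $(-1)^{|\varepsilon|}o$, where $|\varepsilon|=\varepsilon_1+\dots+\varepsilon_m$. Working in a local coordinate $u_i\in\mathbb{R}$ transverse to the edge $F_i$ (with $u_i\geq 0$ on one copy and $u_i\leq 0$ on the adjacent one), a short computation shows that crossing the facet $F_i$ reverses the induced orientation relative to the abstract identification of the two copies with $P$; since $P_\varepsilon$ and $P_{\varepsilon+e_i}$ carry the opposite signs $(-1)^{|\varepsilon|}$ and $(-1)^{|\varepsilon|+1}$, the chosen orientations match across every facet gluing. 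The only further check is around a vertex $v=F_i\cap F_j$, where the four copies $P_\varepsilon,P_{\varepsilon+e_i},P_{\varepsilon+e_j},P_{\varepsilon+e_i+e_j}$ assemble into a disk in the local coordinate $(u_i,u_j)\in\mathbb{R}^2$; the parities alternate around this $4$-cycle, so the assignment is coherent there as well. Hence $\omega_\varepsilon=(-1)^{|\varepsilon|}o$ is a global orientation and $\mathcal{R}$ is an orientable closed surface, so $\chi=2-2g$ gives
\begin{equation*}
g = 1 - \tfrac{1}{2}\chi(\mathcal{R}) = 1 + (m-4)\,2^{m-3}.
\end{equation*}

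I expect the main obstacle to be the rigorous justification of the gluing model, namely passing from the intersection of quadrics to the $2^m$ copies of $P$ glued along facets, including the exact determination of the stabilizers and the local disk structure around each vertex. Once this model is in place the Euler characteristic count is routine and orientability reduces to the parity bookkeeping above; the hypothesis $m\geq 5$ only ensures $g\geq 2$, since the same computation already gives the torus ($g=1$) for $m=4$.
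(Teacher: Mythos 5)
Your proposal is correct: the cell count ($m\,2^{m-2}$ vertices, $m\,2^{m-1}$ edges, $2^m$ two-cells, hence $\chi = 2^{m-2}(4-m)$) and the parity-of-$|\varepsilon|$ orientation on the $2^m$ copies of $P$ are both sound, and the genus formula follows. The paper itself gives no proof of this lemma, deferring entirely to Proposition 4.1.8 of the cited book of Buchstaber and Panov, and your argument is essentially the standard one given there (decomposition of the real moment-angle surface into $2^m$ copies of the polygon glued along facets, Euler characteristic, then orientability), so there is nothing internal to the paper for your proof to diverge from.
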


In the case of three quadrics, the topology of $\mathcal{R}$ was fully described in \cite{Lopez2}. Assume that we have
\begin{equation*}
\left\{
 \begin{array}{l}
 u_1^2 + ... + u_n^2 = 1 \\
\sum\limits_{i=1}^{n}a_i u_i^2 = 0 \\
\sum\limits_{i=1}^{n}b_i u_i^2 = 0\\
 \end{array}
\right.
\end{equation*}
where $a_{i}, b_{i} \in \mathbb{R}$. Suppose that the system above is regular, i.e. defines a smooth manifold $\mathcal{R}$. It turns out that the system is regular if $0 \in \mathbb{R}^2$ doesn't belong to the line interval connecting any two of the $\lambda_i = (a_i, b_i)$. When we move the points $\lambda_i$ around $\mathbb{R}^2$ without breaking the regularity condition ($0 \in \mathbb{R}^2$ doesn't belong to the line interval connecting any two of the $\lambda_i$), then we don't change the diffeomorphism type of $\mathcal{R}$. We can join together as many $\lambda_i$ as possible (without breaking the regularity condition) in single points.

\begin{figure}[h]
\includegraphics[width=1\linewidth]{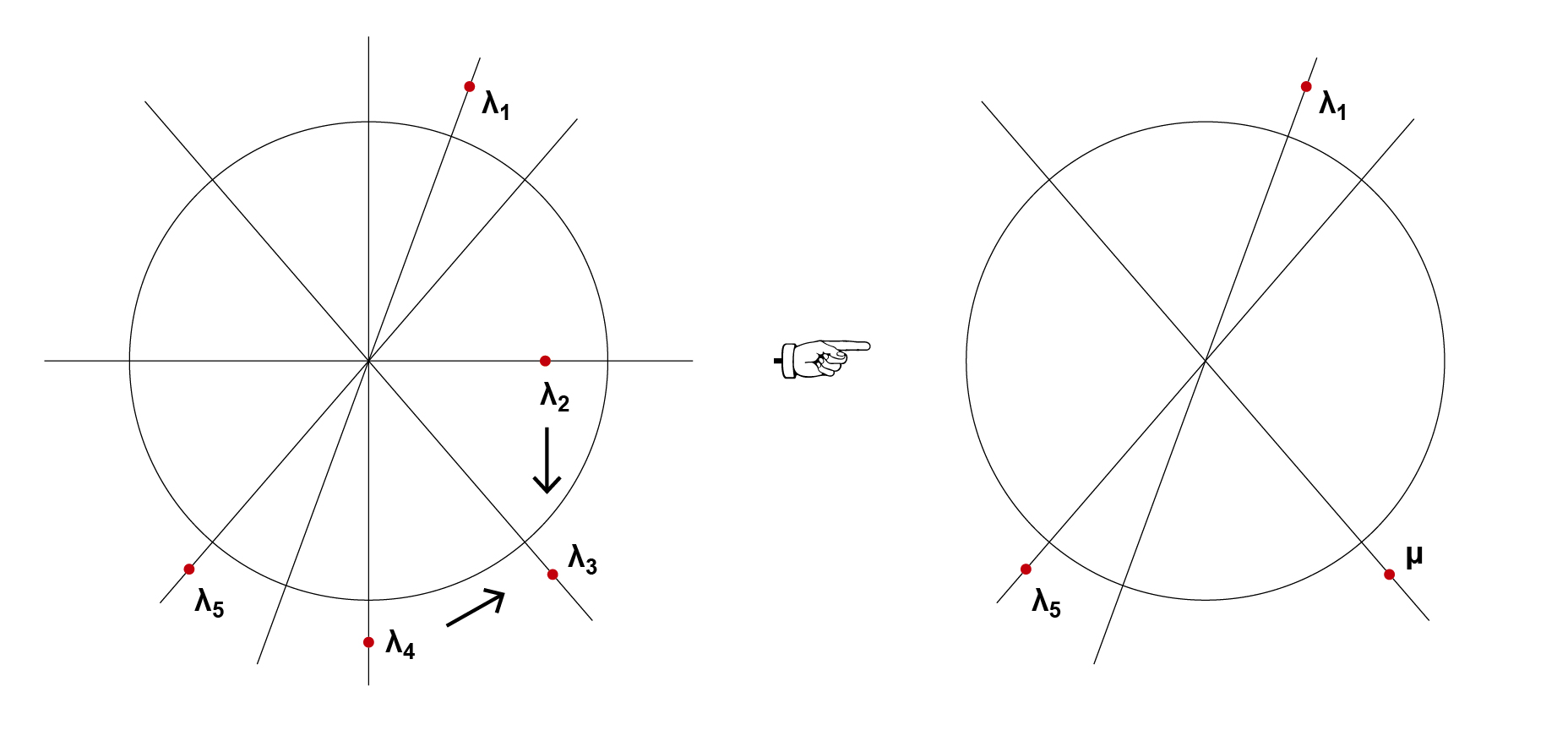}
\end{figure}

For example, in the picture  we can join points $\lambda_2, \lambda_3, \lambda_3$ together in a single point. We can't joint points $\lambda_4, \lambda_5$ without breaking the regularity condition because they are separated by the line connecting $\lambda_1$ and $0$. If $\lambda_4$ crosses the line connecting $\lambda_1$ and $0$, then $0$ belongs to the line connecting $\lambda_4, \lambda_1$ and this breaks the regularity condition. As a result we get three points.

Then let us push  the points radially until they are in the unit circle and distribute them evenly along the circle. It appears that  after this process we always get odd number of points.  So, the $\lambda_i$ can be assumed to be $(2l+1)-$th roots of unity and let us denote roots by $\rho^1,...,\rho^{2l+1}$, where $\rho^j = e^{\frac{2j\pi}{2l+1}}$. Note that vectors $\rho^j$ comes with multiplicity $n_j$ (number of joined vectors $\lambda_i$ during the deformation), where $n_j > 0$. For example, in the right picture points $\lambda_1, \lambda_5$ come with multiplicity 1 and point $\mu$ comes with multiplicity $3$ (we joined three points $\lambda_2, \lambda_3, \lambda_4$ to get $\mu$).

We get that the topology of $\mathcal{R}$ is described by numbers $n_1,...,n_{2l+1}$. Denote
\begin{equation*}
d_i = n_i + ... + n_{i+l-1}, \quad i=1,...,2l+1,
\end{equation*}
where $j$ in $n_j$ is reduced modulo $2l+1$ if $j > 2l+1$.

\begin{theorem}\label{threequad}(see \cite{Lopez2} for more general result) Let $\mathcal{R}$ be the variety corresponding to $n = n_1 + n_2 + ... +n_{2l+1}$. \\
1) if $l=1$, then $\mathcal{R}$ is diffeomorphic to the product $S^{n_1-1} \times S^{n_2 - 1} \times S^{n_3-1}$,\\
2) If $l > 1$, then $\mathcal{R}$ is diffeomorphic to the connected sum
\begin{equation*}
\#_{i=1}^{2l+1} (S^{d_i -1} \times S^{n-d_i -2}).
\end{equation*}
\end{theorem}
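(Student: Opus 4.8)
The statement is a specialization of the classification of intersections of quadrics due to López de Medrano, and the plan is to reconstruct that argument in three stages: reduce to a normal form, dispatch the case $l=1$ by hand, and treat $l>1$ by an inductive surgery. First I would justify that the diffeomorphism type of $\mathcal{R}$ depends only on the combinatorial data $(n_1,\ldots,n_{2l+1})$. The regularity condition --- that $0 \in \mathbb{R}^2$ lies on no segment joining two of the $\lambda_i$ --- is open, and over each path component of the space of regular coefficient configurations the compact manifolds $\mathcal{R}$ fit into a smooth fiber bundle (Ehresmann), hence have constant diffeomorphism type. Sliding a point $\lambda_i$ without crossing a line through $0$ and another $\lambda_j$ keeps the configuration regular, so the radial pushing and the merging of points described just before the statement do not change $\mathcal{R}$; when several $\lambda_i$ are merged, their squared coordinates group into a single block, recording only the multiplicity. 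This brings every regular configuration to points at the $(2l+1)$-th roots of unity $\rho^1,\ldots,\rho^{2l+1}$ with multiplicities $n_1,\ldots,n_{2l+1}$.

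For the case $l=1$ there are three blocks, sitting at the cube roots of unity. Writing $s_j = \sum u_i^2$ with the sum over the $j$-th block (so $s_j \geq 0$), the two homogeneous quadrics collapse to the single complex equation $s_1\rho^1 + s_2\rho^2 + s_3\rho^3 = 0$, while the sphere gives $s_1 + s_2 + s_3 = 1$. Since the only nonnegative linear dependence among the three cube roots of unity is the one with all coefficients equal, this forces $s_1 = s_2 = s_3 = 1/3$. Hence each block is an independent round sphere $S^{n_j-1}$ of radius $1/\sqrt{3}$, the three factors do not interact, and $\mathcal{R} \cong S^{n_1-1}\times S^{n_2-1}\times S^{n_3-1}$, as claimed.

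The case $l>1$ is where the real work lies, and here I would follow \cite{Lopez2}. The idea is to induct on the number of points, reducing it by an elementary operation that alters $\mathcal{R}$ by a connected sum. Rotating a line through $0$ produces a separating hypersurface inside $\mathcal{R}$; the two sides are, up to isotopy, pieces that by induction are products of spheres glued along a sphere, which assemble into the connected-sum summands. The window sums $d_i = n_i + \ldots + n_{i+l-1}$ count the multiplicities lying in a half-circle arc of $l$ consecutive roots, and $(d_i-1)$ and $(n-d_i-2)$ are exactly the dimensions of the two spheres produced by the surgery across that arc (note $(d_i-1)+(n-d_i-2)=n-3=\dim\mathcal{R}$), while the cyclic $\mathbb{Z}_{2l+1}$ symmetry of the root configuration yields the $2l+1$ summands.

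The main obstacle is precisely this $l>1$ step: verifying that each cut yields the asserted sphere product, that the pieces assemble into a single connected sum rather than a more complicated plumbing, and that the count is exactly $2l+1$ with the correct window dimensions. This bookkeeping is the substance of López de Medrano's theorem. As an independent check one can compute the cohomology ring through the Hochster-type formula for the (real) moment-angle manifold of the associated polytope, but recognizing the diffeomorphism type still requires the geometric decomposition. I would therefore give the reduction and the $l=1$ computation in full and invoke \cite{Lopez2} for the inductive surgery establishing the connected-sum formula.
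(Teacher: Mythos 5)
Your proposal is correct and takes essentially the same route as the paper: after the deformation-to-roots-of-unity normalization you describe (which is exactly the discussion the paper gives before the statement), the paper does not prove the classification itself but simply cites \cite{Lopez2}, just as you do for the $l>1$ surgery argument. Your explicit $l=1$ verification --- using that the only linear relation among the cube roots of unity is $\rho^1+\rho^2+\rho^3=0$, forcing $s_1=s_2=s_3=1/3$ and hence a product of round spheres --- is a correct supplement that the paper leaves to the reference.
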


Let $P$ be an $n$-polytope and $H$ be a hyperplane  that does not contain any vertex of $P$. Then the intersections $P \cap \{H \leqslant 0 \}$, $P \cap \{H \geqslant 0 \}$ are simple polytopes.  If $H$ separates a vertex $v$ from the other vertices of $P$ and $v \subset \{ H \leqslant 0 \}$, then we say that the polytope $P \cap \{H \geqslant 0 \}$ is obtained from $P$ by a vertex truncation. Let us denote by $P_v$ and $\mathcal{R}_v$ the polytope obtained by vertex truncation and the corresponding system of quadrics, respectively.

\begin{theorem}\label{Lop}(\cite{Lopez3}) Let $P$ be a simple polytope of dimension $n$ with $k$ facets. Then $\mathcal{R}_v$ is diffeomorphic to $\mathcal{R} \# \mathcal{R} \#_{(2^{k-n} - 1)}(S^1 \times S^{n-1})$.
\end{theorem}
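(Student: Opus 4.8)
The plan is to realize a vertex truncation on the level of the real moment-angle manifold as an explicit cut-and-paste, and then to recognize the outcome as a double. Throughout I would use the standard polytopal model $\mathcal{R} = (P \times \mathbb{Z}_2^{\,k})/\!\sim$, where $\mathbb{Z}_2^{\,k}$ is indexed by the $k$ facets of $P$ and $(x,\varepsilon)\sim(x,\varepsilon')$ whenever $\varepsilon-\varepsilon'$ is supported on the facets containing $x$. This agrees with the quadric description of Section~\ref{torictopology}, since the map $u\mapsto(u_1^2,\dots,u_k^2)$ exhibits $\mathcal{R}$ as the $\mathbb{Z}_2^{\,k}$-quotient with $\mathcal{R}/\mathbb{Z}_2^{\,k}=P$. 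Writing $\pi\colon\mathcal{R}\to P$ for the projection, the fiber over a point lying on exactly $j$ facets is a single $\mathbb{Z}_2^{\,k}$-orbit of $2^{\,k-j}$ points; in particular $\mathcal{R}$ is an $n$-manifold.

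First I would fix the cutting hyperplane $H$ separating $v$ from the other vertices, as in the paragraph preceding the statement, and write $P = P_v \cup_F \Sigma$, where $\Sigma = P\cap\{H\leq 0\}$ is the cut-off $n$-simplex, $F = P\cap H$ is the new facet (an $(n-1)$-simplex), and $P_v = P\cap\{H\geq 0\}$. Since $P$ is simple, exactly $n$ facets $G_1,\dots,G_n$ meet at $v$; these are the only facets met by $\Sigma$, while the remaining $k-n$ facets avoid $\Sigma$. I would then compute the three preimages under $\pi$. Because $\Sigma$ meets only $G_1,\dots,G_n$, the $k-n$ sign coordinates for the other facets split off freely, giving
\[
\pi^{-1}(\Sigma) = \big[(\Sigma\times\mathbb{Z}_2^{\,n})/\!\sim_{G_1,\dots,G_n}\big]\times\mathbb{Z}_2^{\,k-n}.
\]
The bracketed space is the $\mathbb{Z}_2^{\,n}$-quotient total space of a simplex along the $n$ facets meeting at one vertex, which is exactly the disk $D^n$, realized by $u\mapsto(u_1^2,\dots,u_n^2)$ on $\{\sum_{i=1}^n u_i^2\leq 1\}$. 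Hence $\pi^{-1}(\Sigma)$ is a disjoint union of $2^{\,k-n}$ copies of $D^n$, and $\pi^{-1}(F)=\partial\,\pi^{-1}(\Sigma)$ is $2^{\,k-n}$ copies of $S^{n-1}$. Setting $M:=\pi^{-1}(P_v)$, which uses all $k$ sign coordinates since $P_v$ meets every original facet, I conclude $M = \mathcal{R}\setminus(2^{\,k-n}\ \text{open }n\text{-disks})$.

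Next I would identify $\mathcal{R}_v$ with the double of $M$. The truncated polytope $P_v$ carries the $k$ original facets together with $F$, so $\mathcal{R}_v = (P_v\times\mathbb{Z}_2^{\,k}\times\mathbb{Z}_2)/\!\sim$, the last $\mathbb{Z}_2$ being the sign coordinate of $F$. Its two values give two copies of $M=(P_v\times\mathbb{Z}_2^{\,k})/\!\sim$, and the identification along $F$ glues them precisely along $\partial M=\pi^{-1}(F)$. Thus $\mathcal{R}_v = D(M)$, the double of $\mathcal{R}$ with $2^{\,k-n}$ disks removed. Finally I would invoke the standard fact that for a closed $n$-manifold $N$,
\[
D\big(N\setminus m\,D^n\big) = N \# N \#_{(m-1)}(S^1\times S^{n-1}),
\]
proved by induction on $m$: the base case $D(N\setminus D^n)=N\# N$ is immediate from the definition of connected sum, and the inductive step uses that removing one further interior disk before doubling has the effect of deleting two disjoint disks from $D(N\setminus (m-1)D^n)$ and gluing the resulting spheres, i.e.\ forming the connected sum with one $S^1\times S^{n-1}$. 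Applying this with $N=\mathcal{R}$ and $m=2^{\,k-n}$ gives $\mathcal{R}_v \cong \mathcal{R}\#\mathcal{R}\#_{(2^{\,k-n}-1)}(S^1\times S^{n-1})$.

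The main obstacle I anticipate is making the local models rigorous: verifying that $\pi^{-1}(\Sigma)$ is a disjoint union of \emph{smooth} standard disks meeting $M$ along unknotted boundary spheres, so that filling them back in genuinely recovers $\mathcal{R}$ and the double is formed along the correct spheres, and then handling the orientation bookkeeping that in principle distinguishes $\mathcal{R}\#\mathcal{R}$ from $\mathcal{R}\#\overline{\mathcal{R}}$. The combinatorial computation of the three preimages is routine once the polytopal model is in place; the care is entirely in the smooth gluing near $v$ and in confirming the doubling formula in the possibly non-orientable setting relevant here.
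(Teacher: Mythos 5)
The paper does not actually prove Theorem \ref{Lop}: it is quoted verbatim from Gitler--L\'opez de Medrano \cite{Lopez3}, so there is no internal proof to compare against. Your argument is, in outline, the standard cut-and-double proof of this result as in the cited reference, and the outline is correct: the identification of $\pi^{-1}(\Sigma)$ with $2^{k-n}$ disjoint smooth disks via the model $(P\times\mathbb{Z}_2^k)/\!\sim$ (your check that $\Sigma$ meets only the $n$ facets through $v$ is exactly right, since any other facet has all its vertices on the far side of $H$), the recognition of $\mathcal{R}_v$ as the double of $M=\mathcal{R}\setminus 2^{k-n}\,D^n$ by splitting off the sign coordinate of the new facet $F$, and the inductive doubling formula are all sound. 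The two points you defer at the end are genuine but can be closed in a few lines, and you should do so. For $\mathcal{R}\#\mathcal{R}$ versus $\mathcal{R}\#\overline{\mathcal{R}}$: each coordinate reflection $\sigma_i\colon u_i\mapsto -u_i$ preserves $\mathcal{R}$, fixes the nonempty codimension-one submanifold $\pi^{-1}(G_i)=\{u\in\mathcal{R}:u_i=0\}$ pointwise, and acts as $-1$ on its normal line; hence when $\mathcal{R}$ is connected and orientable, $\sigma_i$ is orientation-reversing, so $\mathcal{R}$ admits an orientation-reversing self-diffeomorphism and $\mathcal{R}\#\overline{\mathcal{R}}\cong\mathcal{R}\#\mathcal{R}$, while in the non-orientable case the distinction is vacuous. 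For the sphere-bundle summands in the induction: the double of an orientable manifold with boundary is orientable, so in the orientable case each self-connected-sum must yield the trivial bundle $S^1\times S^{n-1}$; in the non-orientable case, connected sum with the twisted and with the trivial $S^{n-1}$-bundle over $S^1$ produce diffeomorphic manifolds, so the stated formula holds in both cases. Two smaller housekeeping remarks: the statement implicitly requires $\mathcal{R}$ connected (equivalently, the presentation irredundant, as in part 3 of the theorem in Section \ref{torictopology}) for the connected-sum expression to make sense, and the smooth-structure worry at the corner $\pi^{-1}(F)$ disappears if one works in the quadric model, where the truncation is realized by adjoining one new variable and one new quadric and the resulting $\mathcal{R}_v$ is smooth by genericity.
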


Also, let us mention the following theorem:

\begin{theorem}\label{simp}(\cite{Bahri}, \cite{Lopez}) The intersection of quadrics $\mathcal{R}$ is $(k-1)$-connected if, and only if, the intersection of any $k$ facets of corresponding polytope $P$ is non-empty.
\end{theorem}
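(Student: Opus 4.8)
The plan is to identify $\mathcal{R}$ with the real moment-angle manifold $\mathbb{R}\mathcal{Z}_K$ associated with the simplicial complex $K$ on the set of $n$ facets of $P$ whose simplices are exactly the collections of facets with non-empty common intersection (the nerve of the facet cover of $\partial P$, equivalently the boundary of the polytope dual to $P$); this identification is the content of the dictionary set up in Section~\ref{torictopology}. Under this dictionary the hypothesis becomes purely combinatorial: the statement \emph{every $k$ facets meet} is equivalent to \emph{every $k$-element subset of vertices of $K$ spans a simplex}, i.e.\ $K$ contains the full $(k-1)$-skeleton $\mathrm{skel}_{k-1}(\Delta^{n-1})$ of the simplex on the facet set. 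Thus the theorem reduces to: $\mathbb{R}\mathcal{Z}_K$ is $(k-1)$-connected if and only if $K \supseteq \mathrm{skel}_{k-1}(\Delta^{n-1})$.

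The main tool I would use is the homological splitting of real moment-angle complexes (the real form of the Bahri--Bendersky--Cohen--Gitler decomposition, \cite{Bahri}),
\[
\tilde{H}_i(\mathbb{R}\mathcal{Z}_K;\mathbb{Z}) \;\cong\; \bigoplus_{\emptyset \neq J \subseteq [n]} \tilde{H}_{i-1}(K_J;\mathbb{Z}),
\]
where $K_J$ denotes the full subcomplex of $K$ on the vertex subset $J$. Note the absence of the $|J|$-shift that occurs in the complex case, since each coordinate now contributes a $1$-cell $D^1$ rather than a $2$-cell; I would double check this indexing against the base case $K=\partial\Delta^{k}$, where the formula must reproduce $\mathbb{R}\mathcal{Z}_K \simeq S^{k}$. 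This reduces the connectivity of $\mathcal{R}$ to the vanishing of the reduced homology of all full subcomplexes $K_J$ in low degrees.

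For the direction $(\Leftarrow)$, assume $K \supseteq \mathrm{skel}_{k-1}(\Delta^{n-1})$. Then for every $J$ the subcomplex $K_J$ has the same $(k-1)$-skeleton as the full simplex $\Delta_J$ on $J$, because every face of $\Delta_J$ of dimension $\le k-1$ is a simplex of $K$. Since reduced homology in degrees $\le k-2$ is determined by the $(k-1)$-skeleton and $\Delta_J$ is contractible, $\tilde{H}_{i}(K_J)=0$ for all $i \le k-2$ and all $J$; by the splitting, $\tilde{H}_i(\mathcal{R})=0$ for $i \le k-1$. For $(\Rightarrow)$ I argue by contraposition: if some $k$ facets fail to meet, that $k$-set is a non-face and hence contains a minimal non-face $J_0$ with $|J_0|\le k$. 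For a minimal non-face the full subcomplex is the boundary of a simplex, $K_{J_0}=\partial\Delta_{J_0}\simeq S^{|J_0|-2}$, so $\tilde{H}_{|J_0|-2}(K_{J_0})=\mathbb{Z}$ contributes a nonzero summand to $\tilde{H}_{|J_0|-1}(\mathcal{R})$ in degree $|J_0|-1 \le k-1$, whence $\mathcal{R}$ is not $(k-1)$-connected.

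The step I expect to be the main obstacle is upgrading homological $(k-1)$-connectivity to genuine $(k-1)$-connectivity, since the splitting only sees homology. For $k\ge 2$ the skeleton hypothesis makes the $1$-skeleton of $K$ complete, and I would invoke the fact that the real moment-angle complex of a complex with complete $1$-skeleton is simply connected (the polyhedral-product $\pi_1$ criterion); Hurewicz then promotes the homology vanishing to $(k-1)$-connectivity. The case $k=1$ is just connectedness, matching the irredundancy statement already recorded in Section~\ref{torictopology}. A completely independent route, closer to \cite{Lopez}, would be to run Morse theory for a generic linear height function restricted to the intersection of quadrics $\mathcal{R}$ and read off the indices of its critical points from the combinatorics of $P$; I would keep this in reserve as a cross-check, but the BBCG splitting gives the cleaner argument.
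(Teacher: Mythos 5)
The paper does not actually prove Theorem \ref{simp}: it is imported as a known result, with citations to \cite{Bahri} and \cite{Lopez}, so there is no in-paper argument to compare yours against line by line. Your proposal is a correct reconstruction along the lines of the first cited source, and all the delicate points check out. The unshifted indexing $\tilde{H}_i(\mathbb{R}\mathcal{Z}_K)\cong\bigoplus_{J}\tilde{H}_{i-1}(K_J)$ is the right real form of the splitting (your sanity check is sound: $\partial\Delta^{k}$ has $k+1$ vertices and $\mathbb{R}\mathcal{Z}_{\partial\Delta^{k}}=\partial\bigl((D^1)^{k+1}\bigr)=S^{k}$, and the only nonvanishing summand is $\tilde{H}_{k-1}(K_{[k+1]})$); the skeleton argument for $(\Leftarrow)$ and the minimal non-face argument for $(\Rightarrow)$ are both complete, the latter because full subcomplexes on minimal non-faces are simplex boundaries and the splitting is a direct sum, so the class cannot cancel. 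The $\pi_1$ step you flag as the main obstacle closes more easily than you suggest, without quoting any criterion: the $d$-cells of $\mathbb{R}\mathcal{Z}_K$ are indexed by pairs (a face $\sigma\in K$ with $|\sigma|=d$, a sign vector on the complementary coordinates), so the $2$-skeleton of $\mathbb{R}\mathcal{Z}_K$ depends only on the faces of $K$ of cardinality at most $2$; when the $1$-skeleton of $K$ is complete (the case $k\geqslant 2$) this $2$-skeleton coincides with that of the cube $(D^1)^n$, which is simply connected, and Hurewicz applies. Your $k=1$ remark is also consistent with part 3) of the theorem quoted in Section \ref{torictopology}, via $\tilde{H}_0(\mathbb{R}\mathcal{Z}_K)\cong\bigoplus_J\tilde{H}_{-1}(K_J)$ detecting ghost vertices, i.e.\ redundant inequalities. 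Compared with the second cited source, your BBCG route is the cleaner and more structural one, computing all of $\tilde{H}_*(\mathcal{R})$ at once and making the if-and-only-if transparent through minimal non-faces; the Morse-theoretic approach of \cite{Lopez}, which you keep in reserve, buys something different, namely explicit diffeomorphism types of $\mathcal{R}$ in special cases, which is exactly what the paper extracts from it elsewhere (Theorems \ref{threequad} and \ref{Lop}).
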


From Theorem $\ref{Lop}$ we see that $\mathcal{R}_v$ is not simply-connected.

\section{Proof of Theorem $\ref{Fanos}$}\label{pfan}

It is shown in the previous section that any generic polytope $P$ corresponds to the smooth manifold $\mathcal{R}$, where $\mathcal{R}$ is defined by the system of quadrics. In section $\ref{Ham}$ we showed that the system of quadrics corresponds to the immersed Lagrangian submanifold $L \subset \mathbb{C}^n$. From Theorem $\ref{embdelzant}$ we know that if $P$ is Delzant, then $L$ is embedded. Moreover, we have the following theorem:
\\
\\
\textbf{Theorem $\ref{Fanos}$}. Let $P$ be a Delzant polytope and $L \subset \mathbb{C}^n$ be the corresponding embedded Lagrangian. Assume that $P$ is irredundant (or equivalently $\mathcal{R}$ is connected). Then $L$ is monotone if and only if the polytope $P$ is Fano.
\\
\begin{proof}

Let us recall some notations. We consider $\mathbb{C}^n$ with the standard symplectic form
\begin{equation*}
\omega = dx_1\wedge dy_1 + ... + dx_n\wedge dy_n.
\end{equation*}
The Liouville form is given by
\begin{equation*}
\lambda = x_1dy_1 + ... + x_ndy_n, \quad d\lambda = \omega.
\end{equation*}
The embedding of $L$ is given by formula $(\ref{mainmap})$
\begin{equation*}
\begin{gathered}
L = \psi(\mathcal{N}) \subset \mathbb{C}^n, \quad  \mathcal{N} = \mathcal{R} \times_{D_{\Gamma}} T_{\Gamma},\\
\psi(u_1,...,u_n,\varphi) = (u_1e^{i\pi(\gamma_1,\varphi)},...,u_ne^{i\pi(\gamma_n,\varphi)}).
\end{gathered}
\end{equation*}
As in section $\ref{Ham}$, let $\Lambda \subset \mathbb{R}^{n-k}$ be the lattice generated by columns $\gamma_1,...,\gamma_n \in  \mathbb{Z}^{n-k}$,  $\Lambda^{*}$ be the dual lattice and $D_{\Gamma}$ be the group $\Lambda^{*}/2\Lambda^{*} = \mathbb{Z}_2^{n-k}$.

Let $b$ be an arbitrary element of $H_1(\mathcal{N}, \mathbb{Z})$ and $pr$ be the projection
\begin{equation*}
pr: \mathcal{R} \times T_{\Gamma} \rightarrow \mathcal{N} = \mathcal{R} \times_{D_{\Gamma}} T_{\Gamma}.
\end{equation*}
We can think about $\mathcal{R} \times T_{\Gamma}$ as $2^{n-k}$ sheeted covering space of $\mathcal{N}$. We have the induced map on singular chains $pr: C_{1}(\mathcal{R} \times T_{\Gamma}) \rightarrow C_1(\mathcal{N})$ and there is also homomorphism in the opposite direction $\tau: C_1(\mathcal{N}) \rightarrow C_1(\mathcal{R} \times T_{\Gamma})$ which assigns to a singular simplex the sum of the $2^{n-k}$ distinct lifts. This is a chain map and we have $\tau_{*}:H_1(\mathcal{N}, \mathbb{Z}) \rightarrow H_1(\mathcal{R} \times T_{\Gamma}, \mathbb{Z})$, $pr \circ \tau(b) = 2^{n-k}b$. If we consider homologies with real coefficients, then we get $pr \circ \tau(\frac{b}{2^{n-k}}) = b$. In other words, $pr_{*}: H_{1}(\mathcal{R} \times T_{\Gamma}, \mathbb{R}) \rightarrow H_1(\mathcal{N}, \mathbb{R})$ is surjective.

Assume $H_1(\mathcal{R}, \mathbb{R})$ is generated by elements $\tilde{d}_1,...,\tilde{d}_m$ and $H_1(\mathbb{T}_{\Gamma}, \mathbb{R}) = \mathbb{R}^{n-k}$ is generated by elements $\tilde{e}_1,...,\tilde{e}_{n-k}$. Then we have that $H_1(\mathcal{R}\times T_{\Gamma} , \mathbb{R})= H_1(\mathcal{R}, \mathbb{R}) \oplus H_1(T_{\Gamma}, \mathbb{R})$ is generated by $\tilde{d}_1,...,\tilde{d}_m,\tilde{e}_1,...,\tilde{e}_{n-k}$ and $H_1(\mathcal{N}, \mathbb{R})$ is generated by $d_1 = pr_{*}(\tilde{d}_1),..., d_m = pr_{*}(\tilde{d}_m), e_1 = pr_{*}(\tilde{e}_1),..., e_{n-k} = pr_{*}(\tilde{e}_{n-k})$ because $pr_{*}: H_{1}(\mathcal{R} \times T_{\Gamma}, \mathbb{R}) \rightarrow H_1(\mathcal{N}, \mathbb{R})$ is surjective.

For simplicity let us denote the elements $\psi_{*}(e_i)$, $\psi_{*}(d_j) \in H_1(L, \mathbb{R})$ by $e_i, d_j \in H_1(\mathcal{N}, \mathbb{R})$ respectively.

Without loss of generality, assume that the vectors $\gamma_1$,...,$\gamma_{n-k}$ form a basis for $\Lambda$. Let $\varepsilon_1,...,\varepsilon_{n-k}$ be the vectors dual to $\gamma_1$,...,$\gamma_{n-k}$. Let us find $\tilde{e}_i$ and $e_i$ explicitly. Easy to see that
\begin{equation}\label{cycles}
\begin{gathered}
\tilde{e}_i: I \rightarrow \mathcal{R} \times T_{\Gamma} \\
\tilde{e}_i(s) = (u_1,...,u_n, 2s\varepsilon_i),\\
e_i: I \rightarrow L = \psi(\mathcal{N}) \\
e_i(s) = \psi(u_1,...,u_n, 2s\varepsilon_i),\\
I = [0,1], \;\; s\in [0,1], \;\; u_1,...,u_n=const, \;\; i=1,...,n-k
\end{gathered}
\end{equation}
represent $1-$cycles, $\tilde{e}_1,...,\tilde{e}_{n-k}$ generate $H_1(T_{\Gamma}, \mathbb{R})$ and $e_i = pr_{*}(\tilde{e}_i)$.

By $\varepsilon_{i,p}$ denote the $p$th coordinate of $\varepsilon_i$.

\begin{lemma}\label{areaform}
We have
\begin{equation}\label{area}
\begin{gathered}
\psi^{*}(\lambda)(e_i) = \pi\sum\limits_{p=1}^{n-k}\varepsilon_{i,p}\delta_p,\\
\psi^{*}(\lambda)(d_j) = 0,
\end{gathered}
\end{equation}
where $\delta_p$ defined in $(\ref{equation})$, $i=1,...,n-k$, and $j=1,...,m$.
\end{lemma}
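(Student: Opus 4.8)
The plan is to pull the Liouville form back by $\psi$ and evaluate the resulting \emph{closed} $1$-form on the explicit cycles of $(\ref{cycles})$. First I would record that $\psi^{*}\lambda$ is closed: since $L$ is Lagrangian, $d\psi^{*}\lambda = \psi^{*}\omega = 0$, so the pairing $\psi^{*}(\lambda)(\alpha)$ depends only on the homology class of $\alpha$ and the integrals below are well defined. Writing the coordinates on $\mathbb{C}^n$ along the image of $\psi$ as $z^{j} = u_{j}e^{i\theta_{j}}$ with $\theta_{j} = \pi(\gamma_{j},\varphi)$, so that $x^{j} = u_{j}\cos\theta_{j}$ and $y^{j} = u_{j}\sin\theta_{j}$, a direct computation gives the pointwise identity
\begin{equation*}
x^{j}dy^{j} - y^{j}dx^{j} = u_{j}^{2}\,d\theta_{j}, \qquad d\theta_{j} = \pi\sum_{p=1}^{n-k}\gamma_{j,p}\,d\varphi_{p}.
\end{equation*}

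Next I would exploit the splitting $x^{j}dy^{j} = \tfrac{1}{2}(x^{j}dy^{j}-y^{j}dx^{j}) + \tfrac{1}{2}d(x^{j}y^{j})$. Over any closed cycle the exact term $\tfrac12 d\big(\sum_j x^j y^j\big)$ integrates to zero, so on homology
\begin{equation*}
\psi^{*}(\lambda)(\alpha) = \frac{1}{2}\int_{\alpha}\sum_{j=1}^{n}u_{j}^{2}\,d\theta_{j} = \frac{\pi}{2}\int_{\alpha}\sum_{p=1}^{n-k}\Big(\sum_{j=1}^{n}\gamma_{j,p}u_{j}^{2}\Big)d\varphi_{p}.
\end{equation*}
The inner sum is exactly the left-hand side of the defining quadrics $(\ref{equation})$, so on $\mathcal{R}$ it equals the constant $\delta_{p}$. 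This is the step that injects the numbers $\delta_{p}$ and is really the crux of the computation; everything else is bookkeeping.

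Finally I would substitute the two families of cycles. For $d_{j}$, represented by loops in $\mathcal{R}\times\{\mathrm{pt}\}$, the angle $\varphi$ is constant, hence $d\varphi_{p}=0$ and the integral vanishes, giving $\psi^{*}(\lambda)(d_{j})=0$. For $e_{i}$, described in $(\ref{cycles})$ by $\varphi = 2s\varepsilon_{i}$ with $u$ fixed, one has $d\varphi_{p} = 2\varepsilon_{i,p}\,ds$, so the displayed integral collapses to $\tfrac{\pi}{2}\int_{0}^{1}\sum_{p}\delta_{p}\cdot 2\varepsilon_{i,p}\,ds = \pi\sum_{p}\varepsilon_{i,p}\delta_{p}$. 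I would also check that $e_{i}$ is genuinely closed: since $\varepsilon_{i}\in\Lambda^{*}$ we have $(\gamma_{j},\varepsilon_{i})\in\mathbb{Z}$ for every $j$, whence $\psi(u,2\varepsilon_{i})=\psi(u,0)$, which is what makes $\tilde{e}_i$ a loop and legitimizes dropping the exact term. The only mild subtlety, and the thing I would be most careful about, is this closedness together with the fact that $u_{j}$ may be negative; neither affects the identity $x^{j}dy^{j}-y^{j}dx^{j}=u_{j}^{2}d\theta_{j}$, so no sign issues arise.
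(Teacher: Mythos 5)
Your proposal is correct, and it follows the same overall strategy as the paper's proof — pull $\lambda$ back under $\psi$ and evaluate on the explicit cycles of (\ref{cycles}) — but it organizes the computation via a different, and in fact cleaner, decomposition. The paper computes $\psi^{*}(x_j\,dy_j)$ directly, obtaining $\tfrac14\sin(2\pi(\gamma_j,\varphi))\,d(u_j^2)$ plus $\cos^2$-terms, and then in (\ref{areaprep}) pulls $\delta_p$ out of the sum over $j$ even though $\cos^{2}(\pi(\gamma_j,\varphi))$ still depends on $j$; that last displayed line is not a literal pointwise identity, and the final answer is right only because the discrepancy is an exact form that integrates away over the cycles. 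Your symmetrization $x^{j}dy^{j}=\tfrac12(x^{j}dy^{j}-y^{j}dx^{j})+\tfrac12\,d(x^{j}y^{j})$, combined with $x^{j}dy^{j}-y^{j}dx^{j}=u_j^{2}\,d\theta_j$ and the quadric equations, yields the genuine pointwise identity $\psi^{*}\lambda=\tfrac{\pi}{2}\sum_{p}\delta_p\,d\varphi_p+\tfrac12\,d\bigl(\sum_j x^{j}y^{j}\bigr)$, which makes the injection of the constants $\delta_p$ transparent, gives $\psi^{*}(\lambda)(d_j)=0$ for loops at \emph{any} constant $\varphi$ (the paper restricts to $\varphi=0$ and then argues exactness for other values), and cleanly explains why the paper's abuse is harmless. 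You also verify that $e_i$ closes up via $(\gamma_j,\varepsilon_i)\in\mathbb{Z}$, a point the paper leaves implicit. Both routes cost about the same amount of computation; yours buys a sharper intermediate statement and sidesteps the bookkeeping glitch.
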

\begin{proof}
From (\ref{mainmap}) we have
\begin{equation*}
x_j = u_j\cos(\pi<\gamma_j,\varphi>), \quad y_j = u_j\sin(\pi<\gamma_j,\varphi>),
\end{equation*}
where $\varphi = (\varphi_1,...,\varphi_{n-k})$. Direct calculations show
\begin{equation*}
\begin{gathered}
\psi^{*}dy_j = \sin(\pi<\gamma_j,\varphi>)du_j + \pi\sum\limits_{p=1}^{n-k} u_j\gamma_{j,p}\cos(\pi<\gamma_j,\varphi>)d\varphi_p,
\\
\psi^{*}(x_jdy_j) = \frac{1}{2}\sin(2\pi<\gamma_j,\varphi>)u_jdu_j + \pi\sum\limits_{p=1}^{n-k} u_j^2\gamma_{j,p}\cos^2(\pi<\gamma_j,\varphi>)d\varphi_p = \\
\\
\frac{1}{4}\sin(2\pi<\gamma_j,\varphi>)d(u_j^2) + \pi\sum\limits_{p=1}^{n-k} u_j^2\gamma_{j,p}\cos^2(\pi<\gamma_j,\varphi>)d\varphi_p.
\end{gathered}
\end{equation*}
As a result we obtain
\begin{equation}\label{areaprep}
\begin{gathered}
\psi^{*}(\lambda) = \frac{1}{4}\sum\limits_{j=1}^{n}\sin(2\pi<\gamma_j,\varphi>)d(u_j^2) +
\\
 \pi\sum\limits_{j=1}^{n}\sum\limits_{p=1}^{n-k} u_j^2\gamma_{j,p}\cos^2(\pi<\gamma_j,\varphi>)d\varphi_p =
\\
\frac{1}{4}\sum\limits_{j=1}^{n}\sin(2\pi<\gamma_j,\varphi>)d(u_j^2) + \pi\sum\limits_{p=1}^{n-k} \delta_p\cos^2(\pi<\gamma_j,\varphi>)d\varphi_p.
\end{gathered}
\end{equation}
We see from (\ref{mainmanifold}) that the forms $d\varphi_1,...,d\varphi_{n-k}$ are closed forms and invariant under the action of $D_{\Gamma}$ . Therefore, they are elements of $H^1(\mathcal{N}, \mathbb{R})$. We get
\begin{equation*}
\begin{gathered}
\psi^{*}(\lambda)(e_i) = \int\limits_{e_i}\psi^{*}(\lambda) =  \pi \sum\limits_{p=1}^{n-k}\varepsilon_{i,p}\delta_p.
\end{gathered}
\end{equation*}

Let us note that cycles $\tilde{d}_j$ and $d_j = pr(\tilde{d}_j)$ can be represented by
\begin{equation*}
\begin{gathered}
\tilde{d}_j: I \rightarrow \mathcal{R} \times T_{\Gamma} \\
\tilde{d}_j(s) = (u_1(s),...,u_n(s), 0),\\
d_j: I \rightarrow L = \psi(\mathcal{N}) \\
d_j(s) = \psi(u_1(s),...,u_n(s), 0),\\
I = [0,1], \;\; s\in [0,1].
\end{gathered}
\end{equation*}
In other words, we can assume that $\varphi_1 = ... =\varphi_{n-k} = 0$. From $(\ref{areaprep})$ we see that if $\varphi_1 = ... =\varphi_{n-k} = 0$, then $\psi^{*}(\lambda) = 0$  ( and is exact for other values of $\varphi_i$ because $u_i^2$ is well defined function on $\mathcal{N}$). This implies $\psi^{*}(\lambda)(d_j)=0$.

\end{proof}

\vspace{.05in}

Note that if $b$ belongs to the torsion of $H_1(\mathcal{N}, \mathbb{Z})$, then $\psi^{*}(\lambda)(b) = 0$.

Let us consider a path $r_i(s) \subset \mathcal{N}$ such that
\begin{equation*}
r_i(0) = (u_1,...,u_n,0,...,0), \; \; r_i(1) = (u_1\cos\pi \langle \varepsilon_i, \gamma_1 \rangle,....,u_n\cos\pi \langle \varepsilon_i, \gamma_n \rangle,\varepsilon_i),\\
\end{equation*}
where $i=1,...,n-k$. The path above exists because $\mathcal{R}$ is connected. Note that $r_i(1) = \varepsilon_i \cdot r_i(0)$. Hence, $r_i(s)$ is a loop.

\begin{lemma}\label{class}
We have $\psi^{*}(\lambda)(e_i) = 2\psi^{*}(\lambda)(r_i)$ and $r_i$ is primitive element of $H_1(\mathcal{N}, \mathbb{Z})$.
\end{lemma}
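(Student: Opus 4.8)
The plan is to reduce both assertions to the fiber bundle projection $\pi : \mathcal{N} \to T_{\Gamma}/D_{\Gamma} = T^{n-k}$ from Section \ref{Ham}, using that $\psi^{*}\lambda$ is a \emph{closed} $1$-form on $\mathcal{N}$ (indeed $d\psi^{*}\lambda = \psi^{*}\omega = 0$, since $L$ is Lagrangian), so that its pairing with $H_1(\mathcal{N},\mathbb{Z})$ is well defined and, by Lemma \ref{areaform}, vanishes on every class carried by the fiber $\mathcal{R}$.

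First I would make the target torus explicit. In the coordinate $\varphi$ the torus is $T_{\Gamma} = \mathbb{R}^{n-k}/2\Lambda^{*}$, and $D_{\Gamma} = \Lambda^{*}/2\Lambda^{*}$ acts by $\varphi \mapsto \varphi + \varepsilon$, so $\pi$ sends the class of $(u,\varphi)$ to $\varphi \bmod \Lambda^{*}$ and $\pi_1(T^{n-k}) = H_1(T^{n-k},\mathbb{Z}) = \Lambda^{*}$, with the dual vectors $\varepsilon_1,\dots,\varepsilon_{n-k}$ forming a $\mathbb{Z}$-basis. The loop $r_i$ traces $\varphi$ from $0$ to $\varepsilon_i$ (the $u$-coordinate being projected out), hence $\pi_{*}(r_i) = \varepsilon_i$; likewise $e_i$ traces $\varphi$ from $0$ to $2\varepsilon_i$, so $\pi_{*}(e_i) = 2\varepsilon_i$.

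For the first identity, I would observe that $\pi_{*}(e_i - 2r_i) = 0$. Since $\mathcal{R}$ is connected, the fibration exact sequence $H_1(\mathcal{R}) \to H_1(\mathcal{N}) \to H_1(T^{n-k}) \to 0$ is exact, so $e_i - 2r_i$ lies in the image of $H_1(\mathcal{R})$; pairing with the closed form $\psi^{*}\lambda$ and invoking $\psi^{*}(\lambda)(d_j) = 0$ from Lemma \ref{areaform} yields $\psi^{*}(\lambda)(e_i) = 2\psi^{*}(\lambda)(r_i)$. Equivalently, one can lift to the $2^{n-k}$-fold cover $\mathcal{R}\times T_{\Gamma}$: because $\varepsilon_i$ has order $2$ in $D_{\Gamma}$, the lift of $r_i * r_i$ and the lift $\tilde e_i$ share endpoints $(u,0)$ and $(u,2\varepsilon_i)$ and so differ by a loop carried entirely by the $\mathcal{R}$-factor, which again contributes $0$ by Lemma \ref{areaform}.

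For primitivity, since $\varepsilon_i$ belongs to a $\mathbb{Z}$-basis of $\Lambda^{*}$ there is a coordinate homomorphism $f : H_1(T^{n-k},\mathbb{Z}) = \Lambda^{*} \to \mathbb{Z}$ with $f(\varepsilon_i) = 1$, and then $f \circ \pi_{*} : H_1(\mathcal{N},\mathbb{Z}) \to \mathbb{Z}$ sends $r_i$ to $1$. This forces $r_i$ to be primitive: it is non-torsion, and $r_i = m\alpha$ would give $1 = m\,(f\pi_{*}\alpha)$, so $m = \pm 1$. The only point requiring care is the bookkeeping of the two lattices $\Lambda, \Lambda^{*}$ and the two tori $T_{\Gamma}, T^{n-k}$, so that $\pi_{*}(r_i)=\varepsilon_i$ is genuinely a basis vector; once that is pinned down, the homological input (exactness from connectedness of $\mathcal{R}$, closedness of $\psi^{*}\lambda$, and Lemma \ref{areaform}) is routine.
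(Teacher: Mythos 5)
Your proposal is correct and takes essentially the same route as the paper's own proof: both project via the bundle map to $T^{n-k}$ (your $\pi$, the paper's $\sigma$), use the exact sequence of the fibration with connected fiber $\mathcal{R}$ to conclude that $e_i - 2r_i$ is carried by the fiber and hence annihilated by $\psi^{*}\lambda$ through Lemma \ref{areaform}, and deduce primitivity of $r_i$ from $\pi_{*}(r_i) = \varepsilon_i$ being a basis vector of $H_1(T^{n-k},\mathbb{Z}) = \Lambda^{*}$. Your explicit identifications $T_{\Gamma} = \mathbb{R}^{n-k}/2\Lambda^{*}$, $T^{n-k} = \mathbb{R}^{n-k}/\Lambda^{*}$ and the coordinate functional $f$ merely spell out bookkeeping that the paper leaves implicit.
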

\begin{proof}
 As we noticed in the end of Section $\ref{Ham}$, $\mathcal{N}$ fibers over $T_{\Gamma}/D_{\Gamma} = T^{n-k}$ with fiber $\mathcal{R}$. Let $\sigma$ be the projection $\mathcal{N} = \mathcal{R} \times_{D_\Gamma} T_{\Gamma} \rightarrow T_{\Gamma}/D_\Gamma = T^{n-k}$. From the long exact sequence
\begin{equation*}
0 \rightarrow \pi_1(\mathcal{R}) \rightarrow \pi_1(\mathcal{N}) \rightarrow \pi_1(T^{n-k})
\end{equation*}
we see that cycles $\sigma_{*}(r_1), ..., \sigma_{*}(r_{n-k})$ generate $\pi_1(T^{n-k})$ and this means that they are primitive elements of $H_1(\mathcal{N}, \mathbb{Z})$. Also, $\sigma_{*}(e_i) = 2\sigma_{*}(r_i)$, $\sigma_{*}(d_j) = 0$ for any $i=1,...,n-k$ and $j=1,...,m$. Let us recall that $H_1(\mathcal{N}, \mathbb{R})$ is generated by $d_1,...,d_m,e_1,...,e_{n-k}$. Therefore,
\begin{equation*}
r_i = \sum\limits_{j=1}^m k_j d_j  + \frac{e_i}{2}
\end{equation*}
where $k_j$ are some real numbers. Then from Lemma $\ref{areaform}$ we have $\psi^{*}(\lambda)(r_i) = \frac{1}{2}\psi^{*}(\lambda)(e_i)$.
\end{proof}

Let $\mu$ be the Maslov class of $L = \psi(\mathcal{N})$. Arguing as before and using $(\ref{lagrmasl})$ we can prove that
\begin{equation}\label{lagrmasl2}
\begin{gathered}
\mu(r_i) = \frac{1}{2}\mu(e_i) = \int\limits_0^1 \sum\limits_{p=1}^{n-k}\varepsilon_{i,p}t_p ds =  \sum\limits_{p=1}^{n-k}\varepsilon_{i,p}t_p, \\
\mu(d_j) = 0
\end{gathered}
\end{equation}
where numbers $t_p$ are defined in $(\ref{maslovclass})$ and $\varepsilon_{i,p}$ is the $p$th coordinate of $\varepsilon_i$.
\\

Finally, let us prove our theorem. If $P$ is Fano, then $P$ is given by
\begin{equation*}
P_{A,b}=\{x\in \mathbb{R}^k: <a_i,x> + c \geq  0 \quad for \quad i=1,...,n  \}.
\end{equation*}
In other words (see Section $\ref{torictopology}$  for definitions)
\begin{equation*}
b = (\underbrace{c,...,c}_n)^T.
\end{equation*}
From $(\ref{polsys})$ we see that the corresponding system of quadrics has the form
\begin{equation*}
\begin{gathered}
\Gamma u = \Gamma b = \delta\\
\delta_p = c(\gamma_{1,p}+...+\gamma_{n,p}).
\end{gathered}
\end{equation*}
From $(\ref{area})$, $(\ref{lagrmasl2})$ we have
\begin{equation*}
\begin{gathered}
\psi^{*}(\lambda)(e_i) = c\pi \sum\limits_{p=1}^{n-k}\varepsilon_{i,p}(\gamma_{1,p} + ... + \gamma_{n,p}) = c\pi \sum\limits_{p=1}^{n-k}\varepsilon_{i,p}t_p, \\
\mu(e_i) = 2\int\limits_0^1(t_1\varepsilon_{i,1} + ... + t_{n-k}\varepsilon_{i, n-k})ds = 2 \sum\limits_{p=1}^{n-k}\varepsilon_{i,p}t_p.
\end{gathered}
\end{equation*}

We see $\mu(e_i) = \frac{2}{\pi c}\psi^{*}(\lambda)$ for all $i$.  From Lemma $\ref{class}$ and formula $(\ref{lagrmasl2})$ we obtain
\begin{equation*}
\mu(r_i) = \frac{1}{2}\mu(e_i), \quad \psi^{*}(\lambda)(r_i) = \frac{1}{2}\psi^{*}(\lambda)(e_i).
\end{equation*}
Finally, we obtain
\begin{equation*}
\mu = \frac{2}{c\pi}\psi^{*}(\lambda).
\end{equation*}
This means that $L$ is monotone and the first part of the theorem is proved
\\

Now let us assume that $L$ is monotone with monotonicity constant $\rho$. Hence,
\begin{equation*}
\mu(r_i) = \frac{1}{2}\mu(e_i) = \sum\limits_{p=1}^{n-k}\varepsilon_{i,p}t_p =  \frac{\rho}{2}\psi^{*}(\lambda)(e_i) = \frac{\rho}{2}\sum\limits_{p=1}^{n-k}\varepsilon_{i,p}\delta_p  = \rho \psi^{*}(\lambda)(r_i).
\end{equation*}
Then
\begin{equation*}
\sum\limits_{p=1}^{n-k}\varepsilon_{i,p}(\frac{\rho}{2}\delta_p - t_p) = 0   \Leftrightarrow   \frac{\rho}{2}\delta_p = t_p = \gamma_{1,p} + ... + \gamma_{n,p}
\end{equation*}
for any $i=1,...,n-k$. Let us recall that $\Gamma$ is the matrix with columns $\gamma_j$. We see that $b= (\frac{2}{\rho},...,\frac{2}{\rho})^T$ solves the equation $\Gamma b = \delta= (\delta_1,...,\delta_{n-k})^T$. Therefore, our polytope is Fano.

\end{proof}

\section{Proof of Theorem $\ref{ex1}$}

Let us consider a polytope $P_k \subset \mathbb{R}^{n-2}$ defined by inequalities

\begin{equation*}
\begin{gathered}
\left\{
 \begin{array}{l}
x_i + 1 \geqslant 0 \quad i=1,...,p-1 \\
-x_1 - ... - x_{p-1} +1 \geqslant 0 \\
x_i + 1 \geq 0 \quad i=p,...,n-2\\
-x_1 - ... - x_{k} - x_{p} - ... - x_{n-2} + 1 \geqslant 0  \\
 \end{array}
\right.
\\n-p+k > p, \; \; k < p-1, \;\;  p  \geqslant 1, \;\; n-p \geqslant 1
\end{gathered}
\end{equation*}
When $k=0$ we see that $P_k$ is product of $(p-1)-$simplex $\Delta^{p-1}$ and $(n-p-1)-$simplex $\Delta^{n-p-1}$. Varying $k$ we change the angle between $\Delta^{p-1}$ and $\Delta^{n-p-1}$.

From section $\ref{Ham}$ we have that the corresponding matrix $\Gamma$ and the system of quadrics have the following forms:

$ \quad \quad \quad \quad \quad  \Gamma = $\(
\begin{pmatrix}
    1 & ... & 1 & 1 & ...& 1 & 0 & ... & 0 \\
    1 & ... & 1 & 0 & ...& 0 & 1 & ... & 1 \\
\end{pmatrix}
\)
\begin{equation}\label{eq}
\begin{gathered}
\left\{
 \begin{array}{l}
u_1^2 +...+u_p^2 = p\\
u_1^2 + ... + u_{k}^2 + u_{p+1}^2 + ... + u_n^2 = n-p + k \\
 \end{array}
\right.
\\ n - p +k > p, \;\; k < p-1
\end{gathered}
\end{equation}
The system above is equivalent to
\begin{equation}\label{sys2}
\begin{gathered}
\left\{
 \begin{array}{l}
2u_1^2 + ... + 2u_k^2 + u_{k+1}^2 + ... + u_n^2 = n+k\\
(n-2p+k)u_1^2 + ... + (n-2p+k)u_k^2 + (n-p+k)u_{k+1}^2 + ... + (n-p+k)u_p^2 -\\
\qquad \qquad \qquad - pu_{p+1}^2 - ... - pu_n^2 = 0
 \end{array}
\right.  \\
\end{gathered}
\end{equation}
The second equation of system $(\ref{sys2})$ defines a cone over the product of two ellipsoids of dimensions $p - 1$ and $n-p-1$. By intersecting it with ellipsoid of dimension $n-1$, defined by the first equation, we obtain that the system defines
\begin{equation*}
\mathcal{R} = S^{p-1} \times S^{n-p-1} \subset \mathbb{R}^n.
\end{equation*}

From $(\ref{eq})$ we have that
\begin{equation*}
\gamma_1=...=\gamma_k = (1,1)^T, \quad \gamma_{k+1}=...=\gamma_p = (1,0)^T, \quad \gamma_{p+1} =...=\gamma_n = (0,1)^T.
\end{equation*}
Then the corresponding torus $T_{\Gamma}$ and the embedding of $\mathcal{N}(k,p,n) = \mathcal{R} \times_{D_{\Gamma}} T_{\Gamma}$ into $\mathbb{C}^n$ are given by
\begin{equation*}
T_{\Gamma} = (\underbrace{e^{i\pi(\varphi_1 + \varphi_2)},...,e^{i\pi(\varphi_1 + \varphi_2)}}_k, \underbrace{e^{i\pi\varphi_1},...,e^{i\pi\varphi_1}}_{p-k},\underbrace{e^{i\pi\varphi_2},...,e^{i\pi\varphi_2}}_{n-p}) \subset \mathbb{C}^n,
\end{equation*}
\begin{equation}\label{emb1}
\begin{gathered}
\psi: \mathcal{N}(k,p,n) \rightarrow \mathbb{C}^n \\
\psi(u_1,...,u_n, \varphi_1, \varphi_2) = \\
 (u_1e^{i\pi(\varphi_1 + \varphi_2)}, ...,u_ke^{i\pi(\varphi_1 + \varphi_2)}, u_{k+1}e^{i\pi\varphi_1},...,u_pe^{i\pi\varphi_1},u_{p+1}e^{i\pi\varphi_2},...,u_ne^{i\pi\varphi_2})\\
\varphi_1, \varphi_2 \in \mathbb{R}.
\end{gathered}
\end{equation}

The lattice $\Lambda$ is generated by vectors $\gamma_p$ and $\gamma_n$, the lattice $\Lambda^{*}$ is generated by $(1,0)$, $(0,1)$, and $D_{\Gamma} \approx \mathbb{Z}_2^2$.  We know that $\mathcal{R} \times_{D_{\Gamma}} T_{\Gamma} \rightarrow T^2 = T_{\Gamma}/D_{\Gamma}$ is a fibration, where the fiber is $\mathcal{R}$. We have the fibration over $T^2$ with fiber $S^{p-1} \times S^{n-p-1}$ and with transition maps $\varepsilon_p, \varepsilon_n \in D_{\Gamma}$
\begin{equation}\label{trans}
\begin{gathered}
\varepsilon_p(u_1,...,u_n ) \rightarrow  (-u_1,...,-u_{p}, u_{p+1},...,u_n ),\\
\varepsilon_n(u_1,...,u_n) \rightarrow  (-u_1,...,-u_{k}, u_{k+1},...,u_{p}, -u_{p+1},...,-u_n).
\end{gathered}
\end{equation}

\vspace{.1in}

\begin{lemma}\label{orientation} If $k,p,n$ are even numbers, then $L_k$ is diffeomorphic to $S^{p-1} \times S^{n-p-1} \times T^2$. The fibration is orientable if and only if numbers  $p$ and $n-p+k$ are even.
\end{lemma}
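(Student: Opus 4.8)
The plan is to translate both assertions into statements about the two monodromy diffeomorphisms $\varepsilon_p,\varepsilon_n$ of the fibration $\mathcal{N}(k,p,n)\to T^2$, which are displayed in $(\ref{trans})$ and generate the structure group of a flat bundle with fiber $\mathcal{R}=S^{p-1}\times S^{n-p-1}$. Here the first sphere $S^{p-1}$ sits in the coordinates $u_1,\dots,u_p$ and the second sphere $S^{n-p-1}$ in $u_{p+1},\dots,u_n$, and the product splitting established before the lemma respects exactly this grouping of coordinates (the cone in $(\ref{sys2})$ separates $u_1,\dots,u_p$ from $u_{p+1},\dots,u_n$). Since each $\varepsilon$ only changes signs within these two coordinate blocks, it preserves the product and restricts to an orthogonal (sign-change) map on each factor. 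Reading off $(\ref{trans})$, the map $\varepsilon_p$ is the antipodal map on $S^{p-1}$ and the identity on $S^{n-p-1}$, whereas $\varepsilon_n$ negates the first $k$ of the coordinates $u_1,\dots,u_p$ and is antipodal on $S^{n-p-1}$.

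For the orientability claim I would argue by degrees. A fiber bundle is orientable along its fibers exactly when every generating monodromy preserves the orientation of the fiber, so I must decide when $\varepsilon_p$ and $\varepsilon_n$ act with degree $+1$ on $\mathcal{R}$. The antipodal map on $S^{m-1}\subset\mathbb{R}^m$ has degree $(-1)^m$, and an orthogonal reflection negating $k$ of the coordinates restricts to $S^{p-1}$ with degree $(-1)^k$ (its determinant). Hence $\varepsilon_p$ acts with degree $(-1)^p$, and $\varepsilon_n$ with degree $(-1)^k\cdot(-1)^{n-p}=(-1)^{n-p+k}$. Both degrees equal $+1$ if and only if $p$ is even and $n-p+k$ is even, which is precisely the stated criterion.

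For the triviality statement, assume $k,p,n$ are all even, so $n-p$ is even as well. Then $-\mathrm{Id}_p\in SO(p)$ (as $p$ is even), the reflection in $k$ coordinates lies in $SO(p)$ (as $k$ is even), and $-\mathrm{Id}_{n-p}\in SO(n-p)$ (as $n-p$ is even); consequently both monodromies lie in the connected compact group $G=SO(p)\times SO(n-p)$ acting on $\mathcal{R}$. Because $\varepsilon_p$ and $\varepsilon_n$ commute (they lie in the abelian group $D_\Gamma$), they are contained in a common maximal torus $\mathbb{T}\subset G$, so the structure group reduces to the abelian connected group $\mathbb{T}$. A flat $\mathbb{T}$-bundle over $T^2$ has vanishing Chern numbers and is therefore trivial; equivalently, picking smooth paths $\alpha,\beta$ in $\mathbb{T}$ from $\varepsilon_p,\varepsilon_n$ to the identity and setting $g(s,t)=\alpha(s)\beta(t)$ produces, via $(s,t,x)\mapsto(s,t,g(s,t)\cdot x)$, an explicit fiberwise isomorphism onto the trivial bundle (the clutching relations are satisfied because everything commutes inside $\mathbb{T}$). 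This yields $L_k=\mathcal{N}(k,p,n)\cong T^2\times S^{p-1}\times S^{n-p-1}$.

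The main obstacle is this triviality half rather than orientability, which is just the degree computation above. It is not enough to observe that each monodromy is individually isotopic to the identity; one must trivialize coherently over the two-dimensional base $T^2$, and for a nonabelian structure group a flat bundle with isotopically trivial monodromies can still be topologically nontrivial. The device that resolves this is the reduction to a common maximal torus, after which flatness of the resulting abelian bundle forces triviality (through vanishing Chern numbers, or equivalently the explicit clutching $g(s,t)=\alpha(s)\beta(t)$).
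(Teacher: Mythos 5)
Your proposal reaches the correct conclusions, and its two halves relate to the paper's proof differently. For orientability you and the paper do essentially the same computation in different clothes: the paper isotopes each transition map in $(\ref{trans})$ to a diagonal sign map and then uses the equivariant normal frame $v_1,v_2$ of $\mathcal{R}\subset\mathbb{R}^n$ to conclude that $\varepsilon$ preserves the fiber orientation if and only if it preserves the orientation of $\mathbb{R}^n$, i.e.\ has determinant $+1$; you read off the same signs as degrees $(-1)^p$ and $(-1)^{k}(-1)^{n-p}$ on the product of spheres. For triviality your treatment is actually more careful than the paper's: the paper exhibits explicit one-parameter families of rotations taking $\varepsilon_p|_{fiber}$ and $\varepsilon_n|_{fiber}$ to the identity and then simply asserts that the bundle is trivial, whereas you correctly isolate the coherence problem over the two-dimensional base --- commuting monodromies that are individually isotopic to the identity need not trivialize a flat bundle over $T^2$ --- and resolve it by reducing the structure group to a connected abelian group, with the explicit clutching $g(s,t)=\alpha(s)\beta(t)$. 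This is a genuine gap in the paper's own exposition that your argument fills.

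However, one step of your argument is falsely justified as stated: it is \emph{not} true that two commuting elements of a connected compact Lie group lie in a common maximal torus ($SO(n)$ is not simply connected, and the standard counterexample consists precisely of commuting diagonal sign matrices, e.g.\ $\mathrm{diag}(1,-1,-1)$ and $\mathrm{diag}(-1,1,-1)$ in $SO(3)$). Since your monodromies are diagonal sign matrices, commutativity alone cannot be invoked. The repair is immediate in this case: because $k$, $p$, $n-p$ are even and the sign blocks are nested ($\{1,\dots,k\}\subset\{1,\dots,p\}$), both $\varepsilon_p$ and $\varepsilon_n$ lie in the torus of simultaneous rotations in the consecutive coordinate $2$-planes $(u_1,u_2),\dots,(u_{p-1},u_p),(u_{p+1},u_{p+2}),\dots,(u_{n-1},u_n)$. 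Note that this torus honestly preserves $\mathcal{R}\subset\mathbb{R}^n$, since each such rotation preserves both quadrics of $(\ref{eq})$; this also fixes a minor inaccuracy in your setup, namely that $SO(p)\times SO(n-p)$ itself does not preserve the embedded $\mathcal{R}$ (the second quadric is invariant only under $O(k)\times O(p-k)\times O(n-p)$), so one should either work with this pair-rotation torus or first pass equivariantly to the round product $S^{p-1}\times S^{n-p-1}$. These pair rotations are exactly the isotopies written down in the paper's proof, so with this substitution your abelianized clutching argument becomes a complete and correct justification of the paper's unexplained sentence that the bundle is trivial.
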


\begin{proof}

\textbf{1)}If $k,p,n$ are even numbers, then
\begin{equation*}
\begin{gathered}
(-u_1\cos\phi - u_2\sin\phi, u_1\sin\phi - u_2\cos\phi,...,\\
-u_{p-1}\cos\phi - u_{p}\sin\phi, u_{p-1}\sin\phi - u_{p}\cos\phi, u_{p+1},..., u_n),\\
 \phi \in [0,\pi]
\end{gathered}
\end{equation*}
belongs to $\mathcal{R}$ for all $\phi$. The expression above defines the isotopy between $\varepsilon_p|_{fiber}$ and identity map. In the same way the following expression

\begin{equation*}
\begin{gathered}
(-u_1\cos\phi - u_2\sin\phi, u_1\sin\phi - u_2\cos\phi,...,-u_{k-1}\cos\phi - u_k\sin\phi, \\
 u_{k-1}\sin\phi - u_k\cos\phi, u_{k+1}, ..., u_p,  \\
-u_{p+1}\cos\phi - u_{p+2}\sin\phi, u_{p+1}\sin\phi - u_{p+2}\cos\phi,..., -u_{n-1}\cos\phi - u_{n}\sin\phi, \\
 u_{n-1}\sin\phi - u_{n}\cos\phi)\\
 \phi \in [0,\pi]
\end{gathered}
\end{equation*}

defines isotopy between $\varepsilon_n|_{fiber}$ and identity map. We obtain that our fiber bundle is trivial and
\begin{equation*}
\mathcal{N}(k,p,n) = S^{p-1}\times S^{n-p-1}\times T^2.
\end{equation*}

\textbf{2)} If $k, p, n$ are odd numbers, then
\begin{equation*}
\begin{gathered}
(-u_1, -u_2\cos\phi - u_3\sin\phi, u_2\sin\phi - u_3\cos\phi,...,-u_{k-1}\cos\phi - u_{k}\sin\phi,\\
 u_{k-1}\sin\phi - u_{k}\cos\phi, -u_{k+1}\cos\phi - u_{k+2}\sin\phi, u_{k+1}\sin\phi - u_{k+2}\cos\phi,..., \\
  -u_{p-1}\cos\phi - u_{p}\sin\phi, u_{p-1}\sin\phi - u_{p}\cos\phi, u_{p+1},..., u_n),\\
 \phi \in [0,\pi]
\end{gathered}
\end{equation*}
defines isotopy between $\varepsilon_p|_{fiber}$ and
\begin{equation}\label{isotop2}
\begin{gathered}
(u_1,...,u_n ) \longrightarrow  (-u_1, u_2,..., u_n).
\end{gathered}
\end{equation}
Similarly, $\varepsilon_n|_{fiber}$ is isotopic to $(\ref{isotop2})$

\textbf{3)} If $k,n$ are odd and $p$ is even, then arguing as before we can prove that $\varepsilon_p$ is isotopic to identity and $\varepsilon_n$ is isotopic to
\begin{equation*}
\begin{gathered}
(u_1,...,u_n ) \longrightarrow \\
  (-u_1, u_2,...,u_{n-1}, -u_n).
\end{gathered}
\end{equation*}

\textbf{4)}Arguing in the same way we can prove that $\varepsilon_p$ is isotopic to
\begin{equation*}
\begin{gathered}
(u_1,u_2,...,u_n ) \longrightarrow   ((-1)^p u_1, u_2,...,u_n).
\end{gathered}
\end{equation*}
and $\varepsilon_n$ is isotopic to
\begin{equation*}
\begin{gathered}
(u_1,...,u_n ) \longrightarrow   ((-1)^ku_1, u_2,...,u_{n-1}, (-1)^{n-p}u_n).
\end{gathered}
\end{equation*}

Let us note that
\begin{equation*}
\begin{gathered}
v_1(u) = (u_1, ... ,u_k, u_{k+1}, ...,u_p,0, ...,0)\\
v_2(u) = (u_1, ... ,u_k,0, ...,0, u_{p+1}, ...,u_n)
\end{gathered}
\end{equation*}
are normal vectors to $\mathcal{R}$ at point $u$. Moreover $v_j(\varepsilon_iu) = \varepsilon_iv_j(u)$ for $i=p,n$ and $j=1,2$. Hence, the transition maps $\varepsilon_p, \varepsilon_n$ preserve the orientation of the normal bundle of $\mathcal{R}$. If maps (\ref{trans}) change the orientation of $\mathbb{R}^n$, then they change the orientation of the tangent bundle. So, we obtain that if $p$ is even, then $\varepsilon_p$ preserves the orientation. If $\varepsilon_n$ preserves the orientation, then numbers $k$ and $n-p$ are both simultaneously even or odd (or equivalently $n-p+k$ is even).
\end{proof}

We get that for a fixed $p, n$ the fibrations are isomorphic for all even $k$ (is trivial). Also, for a fixed $p, n$ fibrations are isomorphic for all odd $k$. In other words, if numbers $p$ and $n$ are fixed, then  $\mathcal{N}(k_1, p,n)$ is diffeomorphic to $\mathcal{N}(k_2,p,n)$ if and only if $k_1$, $k_2$ are both even or both are odd.

Denote by $L_k$ the embedded Lagrangian, i.e.
\begin{equation*}
L_k = \psi(\mathcal{N}(k,p,n)) \subset \mathbb{C}^n.
\end{equation*}

\begin{lemma}
The embedded Lagrangians $L_k$ are monotone with monotonicity constant $\frac{\pi}{2}$ and with minimal Maslov number $N_{L_k} = \gcd(p, n-p+k)$.
\end{lemma}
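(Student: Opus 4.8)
The plan is to compute the two invariants appearing in the definition of monotonicity --- the symplectic area class $\psi^*(\lambda)$ and the Maslov class $\mu$ --- on an explicit generating set of $H_1(L_k, \mathbb{R})$, and then to read off both the monotonicity constant and the minimal Maslov number. By Theorem \ref{Fanos}, monotonicity itself is immediate once we verify that $P_k$ is Fano, so the first step is to observe that the defining inequalities of $P_k$ all have $b_i = 1$, i.e. $b = (1,\dots,1)^T$, so the polytope is Fano and the corresponding Lagrangian is monotone. This reduces the lemma to an explicit determination of the constant and of $N_{L_k}$.

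For the computations I would use the generators $e_1, e_2$ of the image of $H_1(T_\Gamma, \mathbb{R})$ together with the half-cycles $r_1, r_2$ introduced before Lemma \ref{class}, since these $r_i$ are the \emph{primitive} integral classes and hence are the ones that control $N_{L_k}$. Here $\varepsilon_1 = (1,0)$, $\varepsilon_2 = (0,1)$ are dual to the basis $\gamma_p = (1,0)^T$, $\gamma_n = (0,1)^T$ of $\Lambda$, and from $(\ref{maslovclass})$ the vector $\gamma_1 + \dots + \gamma_n = (t_1, t_2)^T$ is $(p,\, n-p+k)^T$: indeed $k$ columns equal $(1,1)^T$, $(p-k)$ columns equal $(1,0)^T$, and $(n-p)$ columns equal $(0,1)^T$, giving $t_1 = k + (p-k) = p$ and $t_2 = k + (n-p) = n-p+k$. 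Plugging into $(\ref{lagrmasl2})$ yields $\mu(r_1) = t_1 = p$ and $\mu(r_2) = t_2 = n-p+k$, while $(\ref{area})$ together with $\delta = (p,\, n-p+k)^T$ and Lemma \ref{class} gives $\psi^*(\lambda)(r_1) = \tfrac{\pi}{2}\,\delta_1 = \tfrac{\pi}{2}\,p$ and $\psi^*(\lambda)(r_2) = \tfrac{\pi}{2}(n-p+k)$.

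From these two evaluations the monotonicity constant drops out directly: on each generator we have $\mu(r_i) = \tfrac{2}{\pi}\,\psi^*(\lambda)(r_i)$, and since $\mu$ and $\psi^*(\lambda)$ both vanish on the $d_j$ and on torsion, the relation $\lambda = c\,\mu$ holds with $c = \tfrac{\pi}{2}$. For the minimal Maslov number I would argue that the image $\mu(H_1(L_k,\mathbb{Z})) \subset \mathbb{Z}$ is exactly the subgroup generated by the values of $\mu$ on a full set of generators. Because $r_1, r_2$ are primitive and, together with the $d_j$ (on which $\mu = 0$), generate $H_1(\mathcal{N},\mathbb{Z})$ modulo torsion, the image is the subgroup of $\mathbb{Z}$ generated by $\mu(r_1) = p$ and $\mu(r_2) = n-p+k$, whose nonnegative generator is $\gcd(p,\, n-p+k)$. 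Hence $N_{L_k} = \gcd(p,\, n-p+k)$.

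The step I expect to require the most care is the claim that $r_1, r_2$ together with the $d_j$ actually generate $H_1(\mathcal{N}, \mathbb{Z})$ integrally (not merely rationally), so that no smaller Maslov value can arise from an integral class not captured by these generators. The surjectivity argument in the proof of Theorem \ref{Fanos} was phrased with \emph{real} coefficients, so I would reinforce it here by invoking the fibration $\sigma: \mathcal{N} \to T^2$ and the exact sequence $0 \to \pi_1(\mathcal{R}) \to \pi_1(\mathcal{N}) \to \pi_1(T^2)$: the classes $\sigma_*(r_i)$ generate $\pi_1(T^2) = H_1(T^2, \mathbb{Z})$, so any integral $1$-cycle is, modulo the fiber classes $d_j$ and torsion, an integral combination of $r_1$ and $r_2$. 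Since $\mu$ kills the $d_j$ and the torsion, its integral image is precisely $\langle p,\, n-p+k\rangle = \gcd(p, n-p+k)\,\mathbb{Z}$, which closes the argument.
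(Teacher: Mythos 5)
Your proposal is correct and follows essentially the same route as the paper: verify that $P_k$ is Fano (so Theorem \ref{Fanos} gives monotonicity), compute $t=(p,\,n-p+k)^T$ and $\delta=(p,\,n-p+k)^T$, and evaluate $\mu$ and $\psi^{*}(\lambda)$ on the cycles $r_1,r_2$ via $(\ref{lagrmasl2})$ and $(\ref{area})$ to read off $c=\tfrac{\pi}{2}$ and $N_{L_k}=\gcd(p,\,n-p+k)$. Your extra care about integral generation via the fibration sequence $0\to\pi_1(\mathcal{R})\to\pi_1(\mathcal{N})\to\pi_1(T^2)$ is the same device already used in Lemma \ref{class}, and if anything it is slightly more robust than the paper's shortcut of assuming $p>2$, $n-p>2$ so that the fiber is simply connected and $e_i=2r_i$ holds in $H_1(L_k,\mathbb{Z})=\mathbb{Z}^2$.
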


\begin{proof}

Let us note that the polytope $P_k$ is Delzant and Fano. From Theorem $\ref{Fanos}$ and Theorem $\ref{embdelzant}$ (or Lemma $\ref{l}$) we get that $L_k$ is monotone embedded Lagrangian. Unfortunately, Theorem $\ref{Fanos}$ doesn't help us to find the minimal Maslov number. Let us consider $1-$cycles
\begin{equation*}
\begin{gathered}
e_1(s) = \psi(u_1,...,u_n, 2s\varepsilon_p), \;\; e_2(s) = \psi(u_1,...,u_n, 2s\varepsilon_n), \\
s \in [0,1], \;\; u_1,..,u_n=const, \;\; \varepsilon_p = (1,0), \;\; \varepsilon_n = (0,1),
\end{gathered}
\end{equation*}
and cycles $r_1(s)$, $r_2(s)$ as in Lemma $\ref{class}$. Let $d_1,...d_m$ be elements defined in Lemma $\ref{areaform}$. If $p>2$ and $n-p>2$, then the fiber $S^{p-1} \times S^{n-p-1}$ is simply connected and $H_1(L_k, \mathbb{Z}) = H_1(T^2, \mathbb{R}) = \mathbb{R}^2$ and $e_1 = 2r_1$, $e_2 = 2r_2$ as elements of $H_1(L_k, \mathbb{Z})$.  Then from formula $(\ref{lagrmasl2})$  we obtain
\begin{equation}\label{exm1}
\begin{gathered}
\mu(r_1) = \frac{1}{2}\mu(e_1) = p \\
\mu(r_2) = \frac{1}{2}\mu(e_2) = n-p+k \\
\mu(d_j) = 0, \quad j=1,...,m
\end{gathered}
\end{equation}
We get that the minimal Maslov number $N_{L_k} = \gcd(p, n-p+k)$. From formula (\ref{area}) we have
\begin{equation}\label{exar1}
\begin{gathered}
\psi^{*}(\lambda)(r_1) = \frac{1}{2}\psi^{*}(\lambda)(e_1) = \frac{\pi}{2}p, \\
 \psi^{*}(\lambda)(r_2) = \frac{1}{2}\psi^{*}(\lambda)(e_2) = \frac{\pi}{2}(n-p+k) \\
\psi^{*}(\lambda)(d_j) = 0, \quad j=1,...,m
\end{gathered}
\end{equation}
The lemma follows from $(\ref{exm1})$ and $(\ref{exar1})$.
\end{proof}

Let us construct some explicit examples. Let us fix numbers $p, n$ and varying $k$ construct monotone Lagrangians distinct up to Lagrangian isotopy. Let us recall that if $k, p, n$ are even, then the diffeomorphism type of $L_k$ is independent of $k$ but the Maslov class depends on $k$.

Let $n$ be an even number. Consider monotone embeddings $L_{k} = \psi(\mathcal{N}(k, 4, n))$, where $n \geqslant 10$, $k=0, 2$. All parameters in these Lagrangians satisfy inequalities of $(\ref{eq})$, i.e. $n-4+k > 4$. \\
If $n = 0$ mod $4$,
\begin{equation*}
N_{L_0} = gcd(4, n-4) = 4, \quad N_{L_2} = gcd(4, n-2) = 2.
\end{equation*}
If $n = 2$ mod $4$, then
\begin{equation*}
N_{L_0} = gcd(4, n-4) = 2, \quad N_{L_2} = gcd(4, n-2) = 4.
\end{equation*}
In this example all parameters are even. Hence, the fibration is trivial. So, for each even $n \geqslant 10$  we get $2$ monotone embeddings of $S^{3}\times S^{n - 5} \times T^2$ into $\mathbb{C}^{n}$ as a monotone Lagrangian submanifolds with different minimal Maslov number. This implies that our embeddings are not Lagrangian isotopic.
\\

Let $n$ be an even number. Assume that $L_{k} = \psi(\mathcal{N}(k, 8, 2n ))$, where $n \geqslant 10$,  and $k=0,2,4,6$. We see that $2n-8+k>8$ and all inequalities are satisfied.\\
If $n=0$ mod $4$, then
\begin{equation*}
\begin{gathered}
N_{L_0} = gcd(8, 2n-8) = 8, \quad N_{L_2} = gcd(8, 2n-6) = 2, \\
N_{L_{4}} = gcd(8, 2n-4) = 4, \quad   N_{L_{6}} = gcd(8, 2n-2) = 2.
\end{gathered}
\end{equation*}
If $n=2$ mod $4$, then
\begin{equation*}
\begin{gathered}
N_{L_0} = gcd(8, 2n-8) = 4, \quad N_{L_2} = gcd(8, 2n-6) = 2, \\
N_{L_{4}} = gcd(8, 2n-4) = 8, \quad   N_{L_{6}} = gcd(8, 2n-2) = 2.
\end{gathered}
\end{equation*}
All parameters are even and the fibration is trivial. Therefore, we obtain that for each $n \geqslant 10$ we have at least $3$ monotone embeddings of $S^{7}\times S^{2n - 9} \times T^2$ into $\mathbb{C}^{2n}$ distinct up to Lagrangian isotopy with minimal Maslov numbers $2, 4, 8$.
\\

Let $n$ and $k$ be even numbers. Let us consider $L_{k} = \psi(\mathcal{N}(k, 24, 6n ))$, where $n \geqslant 10$, and $0 \leqslant k \leqslant 22$.\\
If $n=0$ mod $4$, then
\begin{equation*}
\begin{gathered}
N_{L_0} = gcd(24, 6n-24) = 24, \quad N_{L_2} = gcd(24, 6n-22) = 2,  \\
N_{L_{4}} = gcd(24, 6n-20) = 4,  \quad  N_{L_{6}} = gcd(24, 6n-18) = 6, \\
N_{L_{8}} = gcd(24, 6n-16) = 8 \quad N_{L_{12}} = gcd(24, 6n-12) = 12.
\end{gathered}
\end{equation*}
If $n=2$ mod $4$, then
\begin{equation*}
\begin{gathered}
N_{L_0} = gcd(24, 6n-24) = 12, \quad N_{L_2} = gcd(24, 6n-22) = 2,  \\
N_{L_{4}} = gcd(24, 6n-20) = 8,  \quad  N_{L_{6}} = gcd(24, 6n-18) = 6, \\
N_{L_{8}} = gcd(24, 6n-16) = 4 \quad N_{L_{12}} = gcd(24, 6n-12) = 24.
\end{gathered}
\end{equation*}
This means that we have at least 6 embeddings distinct up to Lagrangian isotopy. Theorem $\ref{Haef}$ says that we can not have more than $4$ embeddings of $S^{p-1}\times S^{n-p-1} \times T^2$ into $\mathbb{C}^n$ distinct up to smooth isotopy. Hence, at least two of our embeddings are smoothly isotopic but they are not Lagrangian isotopic. We proved the following lemma:

\begin{lemma}\label{lemmasp}
Let $n$ be an even number greater than $9$. There exist at least $6$ embeddings of $S^{23}\times S^{6n-25} \times T^2$ into $\mathbb{C}^{6n}$ distinct up to Lagrangian isotopy. At least two of these embeddings are smoothly isotopic but they are not Lagrangian isotopic.
\end{lemma}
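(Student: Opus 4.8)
The plan is to specialize the family of Theorem~\ref{ex1} to $p=24$ with ambient index $6n$, and then to play off the number of realizable minimal Maslov numbers against the number of smooth isotopy classes allowed by Haefliger--Hirsch. Concretely, I would take $L_k=\psi(\mathcal{N}(k,24,6n))$ for $k\in\{0,2,4,6,8,12\}$. Since $n\geqslant 10$, the constraints $6n-24+k>24$ and $k<23$ of system~(\ref{eq}) hold for every such $k$, so each $P_k$ is a genuine Delzant and Fano polytope; by Theorem~\ref{Fanos} together with Theorem~\ref{embdelzant} each $L_k$ is then a monotone embedded Lagrangian. Because $k$, $p=24$ and $6n$ are all even, part~1 of Lemma~\ref{orientation} shows that the associated fibration is trivial, so every $L_k$ is diffeomorphic to the one fixed manifold $L=S^{23}\times S^{6n-25}\times T^2$. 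This last point is essential: it is what lets me later apply a classification of embeddings of a single smooth manifold.

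Next I would read off the minimal Maslov numbers from the formula $N_{L_k}=\gcd(24,\,6n-24+k)$ established in the preceding lemma, splitting according to the residue of $n$ modulo $4$. A short arithmetic check (the values displayed above) shows that in both cases $n\equiv 0$ and $n\equiv 2\pmod 4$ the six indices $k\in\{0,2,4,6,8,12\}$ produce exactly the six distinct values $\{2,4,6,8,12,24\}$, merely permuted between the two cases. Since Lagrangian isotopy preserves $N_L$, these six embeddings are pairwise non-Lagrangian-isotopic, which yields the first assertion of the lemma.

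Finally I would invoke the Haefliger--Hirsch classification (Theorem~\ref{Haef}) to extract the rigidity phenomenon. The manifold $L$ is closed, oriented and connected of dimension $\dim L=23+(6n-25)+2=6n\geqslant 60$, which is even, so the smooth isotopy classes of embeddings $L\hookrightarrow\mathbb{C}^{6n}$ are in bijection with $H_1(L,\mathbb{Z}_2)$. As $n\geqslant 10$, both spherical factors have dimension at least $2$ and are simply connected, so by the K\"unneth formula $H_1(L,\mathbb{Z}_2)=H_1(T^2,\mathbb{Z}_2)=\mathbb{Z}_2^2$; hence there are exactly four smooth isotopy classes. Since Lagrangian isotopy implies smooth isotopy, the six Lagrangian isotopy classes from the previous step map into only four smooth classes, so by the pigeonhole principle at least two of them represent the same smooth isotopy class. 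Any such pair is smoothly isotopic but, having different minimal Maslov numbers, not Lagrangian isotopic, which is the second assertion. The one genuinely delicate ingredient is the numerical design itself: one must choose $p$ and the range of $k$ so that the $\gcd$ delivers strictly more distinct Maslov values (here $6$) than the rank of $H_1(L,\mathbb{Z}_2)$ permits smooth classes (here $4$); once that inequality is arranged, everything else is an assembly of results already in hand.
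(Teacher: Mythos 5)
Your proposal is correct and follows essentially the same route as the paper: specialize Theorem~\ref{ex1} to $L_k=\psi(\mathcal{N}(k,24,6n))$ for $k\in\{0,2,4,6,8,12\}$, check that the six values $N_{L_k}=\gcd(24,6n-24+k)$ give the set $\{2,4,6,8,12,24\}$ in both residue cases $n\equiv 0,2\pmod 4$, and then pit the six Lagrangian isotopy classes against the four smooth isotopy classes allowed by Theorem~\ref{Haef}. Your write-up is in fact slightly more explicit than the paper's, since you verify the hypotheses of Theorem~\ref{Haef} (orientability, $\dim L=6n\geqslant 5$ even, $H_1(L,\mathbb{Z}_2)=\mathbb{Z}_2^2$ via simple connectivity of the sphere factors) rather than just citing the count of four classes.
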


Let us prove more general lemma.

\begin{lemma}\label{lemmasp2}
Let $m$ be an arbitrary integer greater than or equal to $3$. Let $n$ be an even number greater than $9$. There exist at least $2m$ embeddings of $S^{3\cdot2^{m}-1}\times S^{3\cdot2^{m-2}(n-4)-1} \times T^2$ into $\mathbb{C}^{3\cdot2^{m-2}n}$ distinct up to Lagrangian isotopy. At least $]\frac{m}{2}[ $ of these embeddings are smoothly isotopic but not Lagrangian isotopic, where $]\frac{m}{2}[$ is the smallest integer greater than or equal to $\frac{m}{2}$. If $m=3$, then we get Lemma $\ref{lemmasp}$.
\end{lemma}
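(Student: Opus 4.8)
The plan is to feed the polytope $P_k$ into the preceding lemmas with the specific parameters $p = 3\cdot 2^m$ and ambient dimension $\tilde n = 3\cdot 2^{m-2}n$, so that the fiber $S^{p-1}\times S^{\tilde n-p-1}$ becomes $S^{3\cdot 2^m-1}\times S^{3\cdot 2^{m-2}(n-4)-1}$. First I would verify the constraints of $(\ref{eq})$ for these parameters: since $n\geqslant 10$ one has $\tilde n - 2p = 3\cdot 2^{m-2}n - 6\cdot 2^m \geqslant \tfrac{3}{2}\cdot 2^m > 0$, so $\tilde n - p + k > p$ for every even $k$ with $0\leqslant k < p-1$, and $k<p-1$ is the range we impose. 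The polytope $P_k$ is Delzant and Fano, so each $L_k=\psi(\mathcal N(k,p,\tilde n))$ is a monotone embedded Lagrangian by Theorem $\ref{Fanos}$ and Theorem $\ref{embdelzant}$. Because $k$, $p$ and $\tilde n$ are all even, Lemma $\ref{orientation}$ makes the fibration trivial, so every $L_k$ is an embedding of the single fixed manifold $\mathcal N = S^{3\cdot 2^m-1}\times S^{3\cdot 2^{m-2}(n-4)-1}\times T^2$, which is exactly the manifold in the statement.

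Next I would count the attainable Maslov numbers. By formula $(\ref{exm1})$ the generators of $H_1(\mathcal N,\mathbb Z)=\mathbb Z^2$ satisfy $\mu(r_1)=p$ and $\mu(r_2)=\tilde n-p+k$, so $N_{L_k}=\gcd\bigl(p,\ \tilde n-p+k\bigr)=\gcd\bigl(3\cdot 2^m,\ 3\cdot 2^{m-2}(n-4)+k\bigr)$. As $k$ runs over the even integers $0,2,\dots,3\cdot 2^m-2$ it represents all $3\cdot 2^{m-1}$ even residues modulo $p=3\cdot 2^m$; since $\tilde n-p=3\cdot 2^{m-2}(n-4)$ is even, the quantity $\tilde n-p+k$ then also runs over every even residue modulo $p$. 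For each even divisor $d$ of $p$ I would choose $k$ with $\tilde n-p+k\equiv d\pmod p$, whence $\gcd(p,\tilde n-p+k)=\gcd(p,d)=d$. The even divisors of $3\cdot 2^m$ are precisely $2^a$ and $3\cdot 2^a$ for $1\leqslant a\leqslant m$, exactly $2m$ of them. Thus the numbers $N_{L_k}$ realize all $2m$ of these values, and since Lagrangian isotopy preserves the minimal Maslov number, this gives $2m$ embeddings of $\mathcal N$ into $\mathbb C^{\tilde n}$ pairwise distinct up to Lagrangian isotopy.

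Finally I would bound the number of smooth isotopy classes and conclude by pigeonhole. The ambient dimension $\tilde n=3\cdot 2^{m-2}n$ is even, so Theorem $\ref{Haef}$ puts smooth isotopy classes of embeddings $\mathcal N\hookrightarrow\mathbb C^{\tilde n}$ in bijection with $H_1(\mathcal N,\mathbb Z_2)$. For $m\geqslant 3$ and $n\geqslant 10$ both sphere factors have dimension at least $2$, so $H_1(\mathcal N,\mathbb Z_2)=H_1(T^2,\mathbb Z_2)=\mathbb Z_2^2$ has only $4$ elements. Distributing the $2m$ Lagrangian embeddings among at most $4$ smooth classes, some class contains at least $\lceil 2m/4\rceil=]\frac{m}{2}[$ of them; these are smoothly isotopic but, carrying distinct Maslov numbers, not Lagrangian isotopic. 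Taking $m=3$ recovers Lemma $\ref{lemmasp}$. The only genuinely delicate point is the number-theoretic step showing that every even divisor of $3\cdot 2^m$ is hit; the remainder is verifying the parameter inequalities, invoking the triviality of the fibration, and assembling the cited results.
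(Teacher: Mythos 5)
Your proposal is correct and takes essentially the same route as the paper: the same family $L_k=\psi(\mathcal{N}(k,\,3\cdot2^m,\,3\cdot2^{m-2}n))$ with $k$ even, the same identification $N_{L_k}=\gcd\bigl(3\cdot2^m,\;3\cdot2^{m-2}(n-4)+k\bigr)$, and the same pigeonhole via Theorem \ref{Haef} bounding the smooth isotopy classes by $4$. The only difference is cosmetic and in your favor: where the paper exhibits explicit choices of $k$ in two separate cases according to $n \bmod 4$, you argue uniformly that $3\cdot2^{m-2}(n-4)+k$ sweeps all even residues modulo $3\cdot2^m$, so every one of the $2m$ even divisors of $3\cdot2^m$ is realized as a minimal Maslov number.
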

\begin{proof}
Suppose that $k$ is an even number. We consider $L_k = \psi(\mathcal{N}(k, \; 3\cdot2^m, \; 3\cdot2^{m-2}n) = \psi(S^{3\cdot2^{m}-1}\times S^{3\cdot2^{m-2}(n-4)-1} \times T^2)$. We have $3\cdot2^{m-2}n - 3\cdot2^{m+1}>0$. Therefore, $k$ can be an arbitrary positive even number less than $3\cdot2^m$. \\
\\
If $n = 0$ mod $4$, then
\begin{equation*}
\begin{gathered}
N_{L_{0}} = gcd(3\cdot2^m, \; 3\cdot2^{m-2}(n-4)) = 3\cdot2^m,\\
N_{L_{2^l}} = gcd(3\cdot2^m, \; 3\cdot2^{m-2}(n-4) + 2^l) = 2^l, \quad l=1,...,m\\
N_{L_{3\cdot 2^l}} = gcd(3\cdot2^m, \; 3\cdot2^{m-2}(n-4) + 3\cdot 2^l) = 3\cdot 2^l, \quad l=1,...,m-1
\end{gathered}
\end{equation*}

\vspace*{.1in}

If $n = 2$ mod $4$, then
\begin{equation*}
\begin{gathered}
N_{L_{3\cdot2^{m-1}}} = gcd(3\cdot2^m, \; 3\cdot2^{m-2}(n-4) + 3\cdot2^{m-1} ) = 3\cdot2^{m-1}\\
N_{L_{3\cdot2^{m-1} - 2^l}} = gcd(3\cdot2^m, \; 3\cdot2^{m-2}(n-4) + 3\cdot2^{m-1} - 2^l) = 2^l, \;\; l=1,...,m\\
N_{L_{3\cdot2^{m-1} - 3\cdot 2^l}} = gcd(3\cdot2^m, \; 3\cdot2^{m-2}(n-4) + 3\cdot2^{m-1} - 3\cdot2^l) = 3\cdot 2^l, \;\; l=1,...,m-1
\end{gathered}
\end{equation*}
In both cases we get $2m$ embeddings distinct up to Lagrangian isotopy. From Theorem $\ref{Haef}$ we know that there can not be more that $4$ embeddings of $\mathcal{N}(k, \; 3\cdot2^m, \; 3\cdot2^{m-2}n)$ distinct up to smooth isotopy and this implies the last statement of our lemma.
\end{proof}

Let us consider  some examples of nonorientable submanifolds. Assume that $L_{2k+1} = \psi(\mathcal{N}(2k+1, 5, 5m+2))$, where $m \geqslant 2$ and is odd. then
\begin{equation*}
\begin{gathered}
N_{L_1} = 1, \quad N_{L_3} = 5.
\end{gathered}
\end{equation*}
We have $2$ monotone embeddings with different minimal Maslov number. Our manifold is a fibration over $T^2$ with fiber $S^{4}\times S^{5m-4}$.

Let  $L_{2k+1} = \psi(\mathcal{N}(2k+1, \; 2\cdot 3^5, \; 2\cdot 3^5m+2\cdot 3^5))$, where $m\geqslant 2$. Then
\begin{equation*}
\begin{gathered}
N_{L_1} = 1, \quad N_{L_3} = 3, \quad N_{L_9} = 9, \quad N_{L_{27}} = 27, \quad N_{L_{81}} = 81 \quad N_{L_{243}} = 243.
\end{gathered}
\end{equation*}
We get $6$ embeddings with different minimal Maslov number.

\section{Proof of Theorem $\ref{ex2}$}

Let $\mathcal{R}$ be defined by
\begin{equation}\label{ex4}
\begin{gathered}
\left\{
 \begin{array}{l}
\; \; \; \sum\limits_{m=1}^q u_m^2  + \sum\limits_{m=q+1}^l u_m^2 \quad \; \; + \quad \; \; \sum\limits_{m=p+1}^n u_m^2 = n-p+l \\
-\sum\limits_{m=1}^q u_m^2   \; \quad +  \; \;  \quad \sum\limits_{m=l+1}^k u_m^2  \quad \quad \quad = \quad \quad  k-l-q \\
\; \; \; \sum\limits_{m=1}^q u_m^2 \; \;   \quad \quad + \; \;\quad \quad \; \; \; \sum\limits_{m=k+1}^p u_m^2 \; \; \; = \; \; \; p-k+q
 \end{array}
\right.
\\  q < l < k < p < n, \quad k-l-q  < 0, \quad n-p + k - q < p-l
\end{gathered}
\end{equation}

\textbf{Remark.} Let us mention that the system above corresponds to product of three simplices. If $l$ is even, then the diffeomorphism type of $\mathcal{R}$ is independent of $l$. We will show below that the Maslov class depends on $l$. Hence,  varying $l$ we can construct embedded monotone Lagrangians distinct up to Lagrangian isotopy.
\\

The lattice $\Lambda$ is generated by
\begin{equation*}
\begin{gathered}
\gamma_1=...\gamma_q  = (1,-1,1)^T \quad \gamma_{q+1} = ... = \gamma_l =  \gamma_{p+1}=...=\gamma_n = (1,0,0)^T \\
\gamma_{l+1} = ... = \gamma_k = (0,1,0)^T  \quad \gamma_{k+1}=...=\gamma_p=(0,0,1)^T
\end{gathered}
\end{equation*}

Let us note that under our conditions $\Lambda_u = \Lambda$ for all $u \in \mathcal{R}$. Therefore, by Lemma $\ref{l}$ submanifold $\psi(\mathcal{N})$ is embedded.

Let us study the topology of $\mathcal{N}$. By $f_1, f_2, f_3$ denote the first, second, and the third equations of our system. Then system (\ref{ex4}) is equivalent to
\begin{equation*}
\begin{gathered}
\left\{
 \begin{array}{l}
f_1 + f_2 + f_3 = n \\
(p-k+q)f_2 - (k-l-q)f_3 = 0 \\
(p-k+q)(f_1 + f_2 +f_3) - nf_3 = 0
 \end{array}
\right.
\end{gathered}
\end{equation*}
and we have
\begin{equation*}
\begin{gathered}
\left\{
 \begin{array}{l}
u_1^2 + ... + u_n^2= n \\
\\
-(p-l)\sum\limits_{m=1}^q u_m^2 + (p-k+q)\sum\limits_{m=l+1}^k u_m^2 - (k-l-q)\sum\limits_{m=k+1}^p u_m^2 = 0 \\
 \\
-(n-p+k-q)\sum\limits_{m=1}^q u_m^2 + (p-k+q)\sum\limits_{m=q+1}^l u_m^2 + (p-k+q)\sum\limits_{m=l+1}^k u_m^2 -\\
 \quad - (n-p+k-q)\sum\limits_{m=k+1}^p u_m^2 +  (p-k+q)\sum\limits_{m=p+1}^n = 0
 \end{array}
\right.
\end{gathered}
\end{equation*}
We want to use Theorem $\ref{threequad}$  to study the topology of $\mathcal{R}$. Let us use the same notations as in Theorem  $\ref{threequad}$.
\begin{figure}[h]
\includegraphics[width=0.9\linewidth]{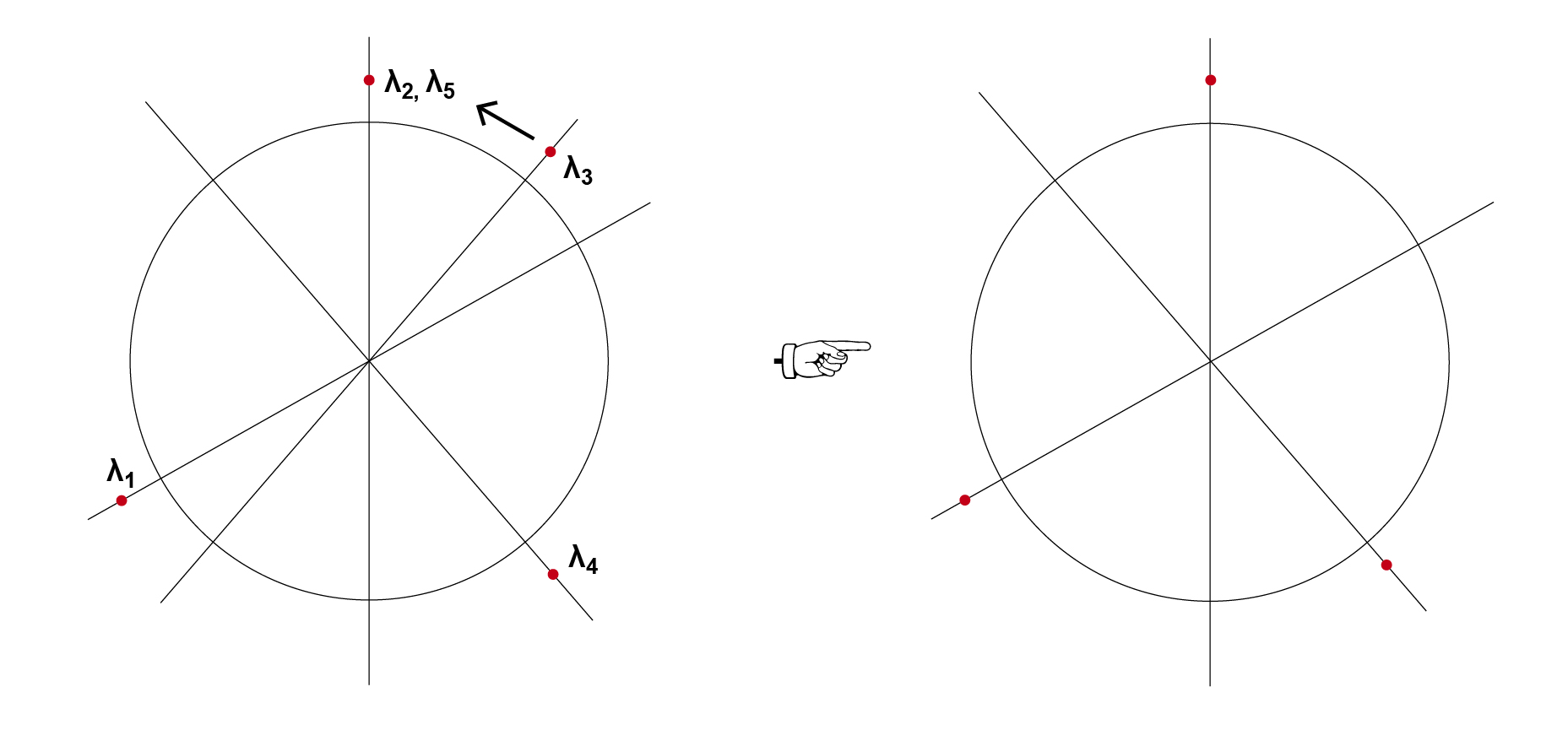}
\end{figure}

\begin{equation*}
\begin{gathered}
\lambda_1 = (-(p-l), -(n-p+k-q)) \quad n_1 = q, \\
\lambda_2 = (0, p-k+q) \quad n_2 = l-q, \\
\lambda_3 = (p-k+q, p-k+q) \quad n_3 = k-l, \\
\lambda_4 = (-(k-l-q), -(n-p+k-q))  \quad n_4 = p-k, \\
\lambda_5 = (0,p-k+q) \quad  n_5=n-p
\end{gathered}
\end{equation*}
We assumed that $p-l > n-p+k-q$ and $k-l-q<0$ Hence, without breaking the regularity condition ($0 \in \mathbb{R}^2$ doesn't belong to the line interval connecting any two of the $\lambda_i$) we can deform points $\lambda_2$, $\lambda_3$, $\lambda_5$  to $(0,1)$ and vectors $\lambda_4$, $\lambda_1$ (for $\lambda_1$ we use the inequality from $(\ref{ex4})$) to $(1,-1)$, $(-2,-1)$ respectively, where $(0,1)$ comes with multiplicity $n_2+n_3+n_5 = n-p+k-q$. Then Theorem $\ref{threequad}$ says that
\begin{equation*}
\mathcal{R} \cong S^{n-p+k -q-1} \times S^{p - k - 1} \times S^{q - 1}.
\end{equation*}

Let us study the topology of $\mathcal{N}$. From (\ref{ex4}) we see that the lattice $\Lambda^{*}$ is generated by $\varepsilon_l = (1,0,0)$, $\varepsilon_k = (0,1,0)$, $\varepsilon_p = (0,0,1)$, and $D_{\Gamma} = \mathbb{Z}_2^3$. We know that $\mathcal{R} \times_{D_{\Gamma}} T_{\Gamma} \rightarrow T_{\Gamma}/D_{\Gamma} = T^3$ is fibration with fiber $\mathcal{R}$. Let us denote $\mathcal{R} \times_{D_{\Gamma}} T_{\Gamma}$ by $\mathcal{N}(q,l,k,p,n)$.

\begin{lemma}
If $q,l,k,p,n$ are even numbers, then the fibration is trivial and $\mathcal{N}(q,l,k,p,n) = S^{n-p+k -q-1} \times S^{p - k - 1} \times S^{q - 1} \times T^3$. The fiber bundle is orientable if and only if numbers $n-p+l, \; \; k-l+q, \;\; p-k+q$ are even.
\end{lemma}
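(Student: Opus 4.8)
The plan is to mimic the proof of Lemma \ref{orientation}, working directly with the three transition maps of the fibration $\mathcal{N}(q,l,k,p,n)\to T^3$ rather than with the deformed product-of-spheres model. First I would record that each generator $\varepsilon\in D_\Gamma$ acts on the fiber $\mathcal{R}$ by the coordinate sign change $u_m\mapsto \cos(\pi\langle\varepsilon,\gamma_m\rangle)\,u_m=\pm u_m$, so the action depends only on the $\gamma$-vectors. Grouping the coordinates by their $\gamma$-vector yields four blocks: $A=\{1,\dots,q\}$ with $\gamma=(1,-1,1)$, $B=\{q+1,\dots,l\}\cup\{p+1,\dots,n\}$ with $\gamma=(1,0,0)$, $C=\{l+1,\dots,k\}$ with $\gamma=(0,1,0)$, and $D=\{k+1,\dots,p\}$ with $\gamma=(0,0,1)$. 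A direct evaluation of the pairings with $\varepsilon_l=(1,0,0)$, $\varepsilon_k=(0,1,0)$, $\varepsilon_p=(0,0,1)$ then shows that $\varepsilon_l$ negates exactly the blocks $A,B$, that $\varepsilon_k$ negates exactly $A,C$, and that $\varepsilon_p$ negates exactly $A,D$.

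For the triviality statement I would observe that the block sizes are $|A|=q$, $|B|=n-p+l-q$, $|C|=k-l$, $|D|=p-k$, all even once $q,l,k,p,n$ are even. The key point is that two coordinates lying in the same block carry the same coefficient in every quadric of (\ref{ex4}), so any rotation mixing such a pair preserves $\mathcal{R}$. Thus, exactly as in part (1) of Lemma \ref{orientation}, pairing up the (even number of) coordinates of a block and rotating each pair by the family $(u_a,u_b)\mapsto(-u_a\cos\phi-u_b\sin\phi,\ u_a\sin\phi-u_b\cos\phi)$, $\phi\in[0,\pi]$, gives an isotopy inside $\mathcal{R}$ from the negation of that block (at $\phi=0$) to the identity (at $\phi=\pi$). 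Performing these isotopies simultaneously over the disjoint blocks negated by a given generator shows that each of $\varepsilon_l,\varepsilon_k,\varepsilon_p$ is isotopic to $\mathrm{id}_{\mathcal{R}}$. Since the flat bundle $\mathcal{N}(q,l,k,p,n)\to T^3$ has monodromy generated by these three commuting maps, their being isotopic to the identity trivializes the bundle, giving $\mathcal{N}=S^{n-p+k-q-1}\times S^{p-k-1}\times S^{q-1}\times T^3$.

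For orientability, note that $\mathcal{R}$ is an orientable product of spheres and $T^3$ is orientable, so the total space is orientable if and only if every transition map preserves the orientation of $\mathcal{R}$. I would reuse the normal-bundle argument of Lemma \ref{orientation}: the gradients of the three quadrics provide a normal frame $v_1,v_2,v_3$ of $\mathcal{R}\subset\mathbb{R}^n$ satisfying $v_i(\varepsilon u)=\varepsilon\,v_i(u)$, so each $\varepsilon$ fixes this frame and hence preserves the orientation of the normal bundle. Consequently $\varepsilon$ preserves the orientation of $T\mathcal{R}$ if and only if it preserves the orientation of $\mathbb{R}^n$, i.e. if and only if it negates an even number of coordinates. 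Counting from the block description gives $|A|+|B|=n-p+l$ for $\varepsilon_l$, $|A|+|C|=k-l+q$ for $\varepsilon_k$, and $|A|+|D|=p-k+q$ for $\varepsilon_p$, so the bundle is orientable exactly when $n-p+l$, $k-l+q$, and $p-k+q$ are all even, as claimed.

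The routine part is the sign bookkeeping across the four blocks. The only genuinely structural points are that rotations within a single $\gamma$-block stay inside $\mathcal{R}$ (which is precisely why one must argue with blocks rather than individual coordinates), and the passage from ``each commuting monodromy generator is isotopic to the identity'' to ``the $T^3$-bundle is trivial.'' I expect the latter to be the main obstacle, and would handle it by the iterated mapping-torus argument already implicit in Lemma \ref{orientation}, using the explicit isotopies above to produce the trivialization stage by stage.
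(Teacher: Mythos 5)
Your proposal is correct and follows essentially the same route as the paper's proof: the same decomposition of the coordinates into blocks according to the $\gamma$-vectors, the same within-block pair rotations (legitimate precisely because same-block coordinates share all quadric coefficients) to show each $\varepsilon_l,\varepsilon_k,\varepsilon_p$ is isotopic to the identity when $q,l,k,p,n$ are even, and the same equivariant-normal-frame argument reducing orientation-preservation on $\mathcal{R}$ to the parity of the number of negated coordinates, yielding $n-p+l$, $k-l+q$, $p-k+q$. Your only departure is that you explicitly flag and patch the step from ``commuting monodromies isotopic to the identity'' to triviality of the $T^3$-bundle (which is safe here since your isotopies go through commuting rotations), a point the paper passes over without comment.
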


\begin{proof}
Involutions $\varepsilon_l, \varepsilon_k, \varepsilon_p$ act on $\mathcal{R}$ by
\begin{equation*}
\begin{gathered}
\varepsilon_l(u_1,...,u_q,u_{q+1},...,u_l,u_{l+1},...,u_k,u_{k+1},...,u_p, u_{p+1}, ..., u_n) = \\
  (-u_1,...,-u_q,-u_{q+1},...,-u_l,u_{l+1},...,u_k,u_{k+1},...,u_p, -u_{p+1}, ..., -u_n), \\
\varepsilon_k(u_1,...,u_q,u_{q+1},...,u_l,u_{l+1},...,u_k,u_{k+1},...,u_p, u_{p+1}, ..., u_n) = \\
  (-u_1,...,-u_q,u_{q+1},...,u_l,-u_{l+1},...,-u_k,u_{k+1},...,u_p, u_{p+1}, ..., u_n), \\
\varepsilon_p(u_1,...,u_q,u_{q+1},...,u_l,u_{l+1},...,u_k,u_{k+1},...,u_p, u_{p+1}, ..., u_n) = \\
  (-u_1,...,-u_q,u_{q+1},...,u_l,u_{l+1},...,u_k, -u_{k+1},..., -u_p, u_{p+1}, ..., u_n).
\end{gathered}
\end{equation*}
\\

The fibration is orientable if and only if involutions $\varepsilon_l, \varepsilon_k, \varepsilon_p$ preserve the orientation of $\mathcal{R}$. Arguing as in Lemma $\ref{orientation}$ we get that $\varepsilon_l$ is isotopic to
\begin{equation*}
\begin{gathered}
(u_1,.., u_n) \rightarrow ( (-1)^{l}u_1,u_2,...,u_{n-1}, (-1)^{n-p} u_n).
\end{gathered}
\end{equation*}
We see that $\varepsilon_l$ preserves the orientation if and only if either $l$, $n-p$ are even, or $l$, $n-p$ are odd (see the end of the proof of Lemma $\ref{orientation}$). In other words, $\varepsilon_l$ preserves the orientation if and only if $n-p+l$ is even. In the same way, $\varepsilon_k$ is isotopic to
\begin{equation*}
\begin{gathered}
(u_1,.., u_n) \rightarrow ((-1)^{q}u_1, u_2,...,u_{k-1},(-1)^{k-l}u_{k}, u_{k+1},...,u_n)
\end{gathered}
\end{equation*}
and preserves the orientation if and only if $q+k-l$ is even. Finally,  $\varepsilon_p$ is isotopic to
\begin{equation*}
\begin{gathered}
(u_1,.., u_n) \rightarrow ((-1)^{q}u_1, u_2,...,u_{p-1},(-1)^{p-k}u_{p}, u_{p+1},...,u_n)
\end{gathered}
\end{equation*}
and preserves the orientation if and only if $q+p-k$ is even.

We see that if $q,l,k,p,n$ are even, then all involutions are isotopic to identity and the fiber bundle is trivial.

\end{proof}

Let $L_l$ be our embedded Lagrangian, i.e.
\begin{equation*}
L_l = \psi(\mathcal{N}(q,l,k,p,n)) \subset \mathbb{C}^n.
\end{equation*}

\begin{lemma}
Lagrangian $L_l$ is  embedded and monotone with the minimal Maslov number $gcd(n-p+l, q+l-k, p-k+q)$.
\end{lemma}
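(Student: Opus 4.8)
The plan is to combine the embedding and monotonicity criteria already established with a direct computation of the Maslov class on a generating set of $H_1$. Embeddedness is immediate: it was checked just above that $\Lambda_u = \Lambda$ for every $u \in \mathcal{R}$, so Lemma~\ref{l} (equivalently Theorem~\ref{embdelzant}) gives that $\psi$ is an embedding. For monotonicity I would first compute the vector $t = \gamma_1 + \dots + \gamma_n$ of (\ref{maslovclass}) by counting multiplicities: there are $q$ copies of $(1,-1,1)^T$, $(l-q)+(n-p)$ copies of $(1,0,0)^T$, $k-l$ copies of $(0,1,0)^T$, and $p-k$ copies of $(0,0,1)^T$. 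This yields $t = (n-p+l,\ k-l-q,\ p-k+q)^T$, which coincides exactly with the right-hand side $\delta = (\delta_1,\delta_2,\delta_3)^T$ of system (\ref{ex4}). Hence $\delta = 1\cdot t$, so the associated Delzant polytope is Fano with $c=1$, and Theorem~\ref{Fanos} shows that $L_l$ is monotone.

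Next I would evaluate the Maslov class on the three primitive cycles $r_{\varepsilon_l}, r_{\varepsilon_k}, r_{\varepsilon_p}$ of Lemma~\ref{class}, attached to the dual basis $\varepsilon_l=(1,0,0)$, $\varepsilon_k=(0,1,0)$, $\varepsilon_p=(0,0,1)$ of $\Lambda^{*}$. Formula (\ref{lagrmasl2}) gives $\mu(r_{\varepsilon_l}) = t_1 = n-p+l$, $\mu(r_{\varepsilon_k}) = t_2 = k-l-q$, and $\mu(r_{\varepsilon_p}) = t_3 = p-k+q$, while the same formula records $\mu(d_j)=0$ for the fiber classes $d_j$ coming from $H_1(\mathcal{R})$.

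Finally I would identify the image $\mu(H_1(\mathcal{N},\mathbb{Z})) \subset \mathbb{Z}$, whose positive generator is $N_{L_l}$. Since $\mathcal{N}$ fibers over $T^3$ with fiber $\mathcal{R}$, the homology sequence $H_1(\mathcal{R},\mathbb{Z}) \to H_1(\mathcal{N},\mathbb{Z}) \to H_1(T^3,\mathbb{Z}) \to 0$ shows that $H_1(\mathcal{N},\mathbb{Z})$ is generated by the images of the fiber classes together with the three lifts $r_{\varepsilon_l}, r_{\varepsilon_k}, r_{\varepsilon_p}$, which project onto generators of $H_1(T^3,\mathbb{Z})$. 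As the fiber classes are annihilated by $\mu$, the image equals $\gcd(t_1,t_2,t_3)\,\mathbb{Z}$, whence $N_{L_l} = \gcd(n-p+l,\ k-l-q,\ p-k+q) = \gcd(n-p+l,\ q+l-k,\ p-k+q)$, the last equality holding because $\gcd$ is insensitive to sign (here $|k-l-q| = |q+l-k|$). The main point to handle with care is precisely this integral generation step: the fiber $\mathcal{R} = S^{n-p+k-q-1}\times S^{p-k-1}\times S^{q-1}$ need not be simply connected (for instance when $q=2$, the factor $S^{q-1}$ is a circle), so rather than discarding $H_1(\mathcal{R})$ outright I must invoke the fibration sequence and use that every fiber class has vanishing Maslov number by Lemma~\ref{areaform} and (\ref{lagrmasl2}).
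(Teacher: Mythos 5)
Your proposal is correct, but it routes the monotonicity step differently from the paper. The paper's proof does not invoke Theorem~\ref{Fanos} for this lemma: it computes both $\mu$ and the symplectic area class directly on the generators via formulas $(\ref{lagrmasl2})$ and $(\ref{area})$, obtaining $\mu(r_1)=n-p+l$, $\mu(r_2)=k-l-q$, $\mu(r_3)=p-k+q$, $\psi^{*}(\lambda)(r_i)=\frac{\pi}{2}\mu(r_i)$ and $\mu(d_j)=\psi^{*}(\lambda)(d_j)=0$, which yields monotonicity (with explicit constant $\frac{\pi}{2}$) and the minimal Maslov number in one stroke. You instead verify the Fano condition by computing $t=\gamma_1+\dots+\gamma_n=(n-p+l,\,k-l-q,\,p-k+q)^T=\delta$, so $\Gamma b=\delta$ with $b=(1,\dots,1)^T$, and then cite Theorem~\ref{Fanos}; this is exactly the pattern the paper uses in the proof of Theorem~\ref{ex1}, and it is valid here, though you should note that Theorem~\ref{Fanos} requires $P$ Delzant and irredundant — the former is your verified $\Lambda_u=\Lambda$ (Theorem~\ref{embdelzant}), and the latter holds since $\mathcal{R}=S^{n-p+k-q-1}\times S^{p-k-1}\times S^{q-1}$ is connected. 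The trade-off: your route explains structurally why monotonicity holds but does not directly produce the monotonicity constant (it is recovered as $\frac{2}{c\pi}$ with $c=1$), while the paper's direct computation gives it for free. On the minimal Maslov number your argument is actually more careful than the paper's: the paper works with generators of $H_1(\mathcal{N},\mathbb{R})$ and leaves the integral generation of $\mu(H_1(\mathcal{N},\mathbb{Z}))$ implicit, whereas the fiber here need not be simply connected (e.g.\ $q=2$ gives an $S^1$ factor), and your use of the exact sequence $\pi_1(\mathcal{R})\to\pi_1(\mathcal{N})\to\pi_1(T^3)\to 1$ together with the vanishing of $\mu$ on fiber classes (loops with $\varphi$ constant, by Lemma~\ref{areaform} and $(\ref{lagrmasl2})$) closes this gap cleanly; the final sign observation $\gcd(n-p+l,\,k-l-q,\,p-k+q)=\gcd(n-p+l,\,q+l-k,\,p-k+q)$ reconciles your values with the statement.
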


\begin{proof}
We already mentioned in the beginning of the proof that $L_l$ is embedded because $\Lambda_u = \Lambda$ for all $u \in \mathcal{R}$ (see Theorem $\ref{l}$).

Let us consider cycles $e_i(s)$, $r_i(s)$ as in Lemma $\ref{class}$, where $i=1,2,3$. Let $d_1,...,d_m$ be elements defined in Lemma $\ref{areaform}$. From formulas $(\ref{lagrmasl2})$ and (\ref{area}) we can find the Maslov class and the symplectic area form
\begin{equation*}
\begin{gathered}
\mu(r_1) = n-p+l, \;\; \mu(r_2) = k-l-q, \;\; \mu(r_3)  = p-k+q\\
\psi^{*}(\lambda)(r_1)=  \frac{\pi}{2}(n-p+l), \;\; \psi^{*}(\lambda)(r_2)=  \frac{\pi}{2}(k-l-q), \psi^{*}(\lambda)(r_3)= \frac{\pi}{2}(p-k+q)\\
\mu(d_j) = \psi^{*}(\lambda)(d_j) = 0, \quad j=1,...,m
\end{gathered}
\end{equation*}
So, we get that $L_l$ is monotone embedded Lagrangian with monotonicity constant $\frac{\pi}{2}$ and with minimal Maslov number $gcd(n-p+l, \; l+q-k, \; p-k+q)$.

\end{proof}

Let us consider some examples. Let
\begin{equation*}
L_{10} = \psi(\mathcal{N}(8,10,16,24,26)) = \psi(S^9 \times S^7 \times S^7  \times T^3) \subset \mathbb{C}^{26}.
\end{equation*}
We have that $L_{10}$ is embedded monotone Lagrangian with minimal Maslov number $N_{L_{10}} = gcd(12,2,16) = 2$.

We can construct more examples

\begin{lemma}
For any $m \geqslant 5$, there exist at least $6$ monotone Lagrangian embeddings of $S^{11} \times S^{47} \times S^{24m-37}  \times T^3$ into $\mathbb{C}^{24m + 24}$ distinct up to Lagrangian isotopy.
\end{lemma}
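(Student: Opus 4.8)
The plan is to produce the six embeddings from a family of Delzant and Fano polytopes combinatorially equal to the product of simplices $\Delta^{11}\times\Delta^{47}\times\Delta^{24m-37}$, in the spirit of the proof of Theorem \ref{ex2}. Such a polytope has $12+48+(24m-36)=24m+24$ facets and lies in $\mathbb{R}^{24m+21}$, so the associated Lagrangian $L=\psi(\mathcal{N})$ sits in $\mathbb{C}^{24m+24}$, the base torus is $T^{3}$, and the fibre $\mathcal{R}$ is the real moment--angle manifold $S^{11}\times S^{47}\times S^{24m-37}$ (Theorem \ref{threequad} with $l=1$). First I would take all defining parameters even; then the orientation/triviality computation from the proof of Theorem \ref{ex2} shows every monodromy $\varepsilon_\bullet$ is isotopic to the identity, the bundle $\mathcal{N}\to T^{3}$ is trivial, and $\mathcal{N}=S^{11}\times S^{47}\times S^{24m-37}\times T^{3}$. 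This pins down the diffeomorphism type independently of the remaining freedom, so that every member of the family is an embedding of the same manifold.

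Monotonicity and embeddedness are then automatic: the Fano condition gives monotonicity by Theorem \ref{Fanos}, and the Delzant condition gives an embedding by Theorem \ref{embdelzant} (equivalently Lemma \ref{l}). The whole problem thus reduces to the minimal Maslov number. By \eqref{lagrmasl2} each polytope in the family has $N_{L}=\gcd\big(\mu(r_1),\mu(r_2),\mu(r_3)\big)$, where $\mu(r_j)=\sum_{p}\varepsilon_{j,p}t_{p}$ is read off from the Maslov vector $t=\gamma_1+\dots+\gamma_n$. Since the minimal Maslov number is invariant under Lagrangian isotopy, it suffices to move the $\gamma_i$ through a discrete family that (i) keeps the polytope Delzant and Fano and (ii) keeps the three clusters of $\gamma_i$ intact (so that $\mathcal{R}$, hence $\mathcal{N}$, is unchanged), while $N_{L}$ runs over at least six distinct even values. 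As $12$, $48$ and $24m-36$ are all multiples of $12$ and $\gcd(48,24m-24)\in\{24,48\}$, the natural target is $N_{L}=\gcd(24,\cdot)$, whose even divisors $\{2,4,6,8,12,24\}$ already give six values.

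The delicate point --- and the step I expect to be the real obstacle --- is exhibiting a shear wide enough to realize those six values. The one--parameter shear used in Theorem \ref{ex2} (shifting the single index $l$) only displaces one $\mu(r_j)$ across a window whose length is governed by the smallest sphere, here $12$; since $\gcd(G,\cdot)$ with $G\mid 48$ collapses such a short window, it produces at most four distinct values and so cannot reach six. The plan is therefore to let the large factor $S^{24m-37}$ carry the shear: one splits its cluster of $24m-36$ generators into two sub-families lying in a common angular sector of the $\lambda$--projection, so that the three--cluster hypothesis of Theorem \ref{threequad} is preserved, and one arranges this split to be the shear parameter. The aim is an admissible window of length of order $24m$ rather than $12$, which is what would let $\gcd(24,\cdot)$ sweep all of $2,4,6,8,12,24$. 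The crux is to verify that each sub-configuration stays Delzant and Fano --- that the sheared normals remain a lattice basis at every vertex and that all constants $b_i$ can be kept equal --- and that enlarging the shear never collapses two of the three clusters. Granting this, the divisor count yields six pairwise-distinct minimal Maslov numbers, and hence six embeddings of $S^{11}\times S^{47}\times S^{24m-37}\times T^{3}$ into $\mathbb{C}^{24m+24}$ no two of which are Lagrangian isotopic.
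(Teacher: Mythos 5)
Your global strategy coincides with the paper's: realize the fibre $S^{11}\times S^{47}\times S^{24m-37}$ via a product-of-three-simplices configuration, take all parameters even so that the monodromies are isotopic to the identity and the bundle over $T^3$ is trivial, get embeddedness and monotonicity from the Delzant and Fano conditions, and separate the embeddings by $N_L=\gcd\bigl(\mu(r_1),\mu(r_2),\mu(r_3)\bigr)$. But your argument stops exactly where the content of the lemma lies: you never exhibit the six configurations. The ``wide shear'' obtained by splitting the generators of the large sphere factor is only announced; you give no explicit Gale vectors, no verification that the split configuration remains Delzant and Fano, and --- most seriously --- no argument that $\mathcal{R}$ survives the split. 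Replacing one cluster by two genuinely distinct Gale directions raises the number of distinct $\lambda$-points from three to four, and by Theorem \ref{threequad} a four-point configuration that cannot be merged back to three points produces a connected sum of sphere products rather than $S^{11}\times S^{47}\times S^{24m-37}$; your ``common angular sector'' clause is meant to rule this out, but it is never shown to be compatible with realizing six residues modulo $24$. A plan whose decisive step is prefaced by ``granting this'' does not prove the lemma; as written you have a proposal, not a proof.

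For comparison, the paper's proof is a direct specialization of the family \eqref{ex4}: it sets $q=12$, $k=36$, $p=24m$, $n=24m+24$, lets $l$ be even with $24<l<24m-48$, and reads off $N_{L_l}=\gcd(24+l,\,l-24,\,24m-24)$ at $l=26,28,30,32,36,48$, obtaining $2,4,6,8,12,24$. Your objection that the one-parameter shear only sweeps a window of length governed by the smallest sphere corresponds to the ordering constraint $q<l<k$ imposed in \eqref{ex4}: within that strict window even $l$ yields only $\gcd(l-24,48)\in\{2,4,6,8\}$, exactly your ``at most four values.'' The paper's fifth and sixth values use $l=k$ and $l>k$. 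At $l=k=36$ the middle cluster is empty, and one can check directly that the configuration $(1,-1,1)^{12},(1,0,0)^{48},(0,0,1)^{24m-36}$ is still regular, Delzant and Fano with the correct fibre, giving $N=12$; the value $24$ at $l=48$, where the cluster size $k-l$ would be negative, is obtained by formula-plugging outside the range in which the construction and the topology identification were established, so your instinct that this point needs genuine extra work (some enlargement of the family, such as the splitting you sketch) is sound. But identifying the difficulty is not resolving it: your text secures at most the four (with the boundary case, five) values that the honest one-parameter family provides, and the statement of the lemma remains unproved by your argument.
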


\begin{proof}
Let us assume that
\begin{equation*}
q=12,\; k=36, \; p=24m, \; n=24m+24.
\end{equation*}
From  $(\ref{ex4})$ we have inequalities
\begin{equation*}
k-l-q < 0, \; \; \; n-p +k - q < p-l \;\; \;  \Leftrightarrow \;\; 24 < l < 24m -48.
\end{equation*}
Suppose $l$ is an even number. Then we have
\begin{equation*}
L_{l} = \psi(\mathcal{N}(12, l, 36, 24m, 24m + 24)) = \psi(S^{47} \times S^{24m-37} \times S^{11}  \times T^3) \subset \mathbb{C}^{24m + 24}.
\end{equation*}
Direct calculations show that we have Lagrangians with
\begin{equation*}
N_{L_{26}} = 2, \; \; \; \; N_{L_{28}} = 4, \; \; \; \; N_{L_{30}} = 6, \; \; \; \; N_{L_{32}} = 8, \; \; \; \; N_{L_{36}} = 12, \; \; \; \; N_{L_{48}} = 24.
\end{equation*}
These Lagrangians have different minimal Maslov numbers and are not Lagrangian isotopic.
\end{proof}

We also can construct smoothly isotopic submanifolds which are not Lagrangian isotopic.
\begin{lemma}
For any $m \geqslant 5$, there exist at least $8$ monotone Lagrangian embeddings of $S^{23} \times S^{191} \times S^{96m-121}  \times T^3$ into $\mathbb{C}^{96m - 121}$ distinct up to Lagrangian isotopy. At least two of these embeddings are smoothly isotopic.
\end{lemma}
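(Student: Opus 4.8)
The plan is to specialize the construction of Theorem \ref{ex2} to one family of parameters, chosen so that the underlying product of three spheres is exactly $S^{23}\times S^{191}\times S^{96m-121}$ while the Maslov modulus is divisible by $96$. Concretely, I would fix
\[
q=24,\qquad k=72,\qquad p=96m-48,\qquad n=96m+96,
\]
and let $l$ range over even integers. With these values the three sphere dimensions of Theorem \ref{ex2} are $n-p+k-q-1=191$, $p-k-1=96m-121$ and $q-1=23$, all independent of $l$; since $q,k,p,n$ and the chosen $l$ are even, the fibration over $T^3$ is trivial, so $\mathcal{N}=S^{191}\times S^{96m-121}\times S^{23}\times T^3$, which is the manifold in the statement. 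The polytope here is a product of three simplices, hence Delzant and Fano, so by Theorem \ref{Fanos} and Theorem \ref{embdelzant} each $L_l=\psi(\mathcal{N})$ is an embedded monotone Lagrangian. The ambient space is $\mathbb{C}^{n}=\mathbb{C}^{96m+96}$: the real dimension of $\mathcal{N}$ is $23+191+(96m-121)+3=96m+96$, so a Lagrangian embedding necessarily lands in $\mathbb{C}^{96m+96}$, and the ``$\mathbb{C}^{96m-121}$'' printed in the statement should read $\mathbb{C}^{96m+96}$.

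Next I would compute the minimal Maslov number from $(\ref{lagrmasl2})$:
\[
N_{L_l}=\gcd(n-p+l,\ l+q-k,\ p-k+q)=\gcd(144+l,\ l-48,\ 96(m-1)).
\]
Since $\gcd(144+l,\,l-48)=\gcd(192,\,l-48)$, this equals $\gcd\bigl(\gcd(192,96(m-1)),\,l-48\bigr)$, and $\gcd(192,96(m-1))$ is $192$ for $m$ odd and $96$ for $m$ even; in either case it is a multiple $M$ of $96$ that divides $192$. Consequently, for every even divisor $d$ of $96$ one has $d\mid M$, so choosing $l=48+d$ forces $N_{L_l}=d$. Taking $d\in\{2,4,6,8,12,16,24,32,48\}$ (so $l\in\{50,52,54,56,60,64,72,80,96\}$, all even and, for $m\ge5$, inside the admissible range $48<l<96m-240$ imposed by $k-l-q<0$ and $n-p+k-q<p-l$) produces nine embeddings with nine distinct minimal Maslov numbers. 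Because Lagrangian isotopy preserves $N_L$, these nine embeddings are pairwise non–Lagrangian-isotopic, which in particular yields the asserted ``at least $8$''.

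Finally I would deduce the smooth-isotopy statement from Theorem \ref{Haef}. As every sphere factor has dimension $\ge2$, the Künneth formula gives $H_1(\mathcal{N},\mathbb{Z}_2)=H_1(T^3,\mathbb{Z}_2)=(\mathbb{Z}_2)^3$, of order $8$; since $n=96m+96$ is even and $\ge5$, Theorem \ref{Haef} shows that $\mathcal{N}$ has exactly $8$ smooth isotopy classes of embeddings into $\mathbb{C}^{n}$. Since the previous step exhibits nine embeddings that are pairwise distinct up to Lagrangian isotopy, the pigeonhole principle forces at least two of them into the same smooth isotopy class; such a pair is smoothly isotopic but not Lagrangian isotopic, as claimed.

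The main obstacle is the arithmetic engineering of the parameters: one must simultaneously (i) match the prescribed sphere dimensions $23,191,96m-121$, (ii) arrange that the Maslov modulus $\gcd(192,96(m-1))$ is a multiple of $96$ so that many divisors are realizable, and (iii) produce strictly more than $8$ Lagrangian isotopy classes while keeping each admissible $l$ even and inside the range that preserves both the embedding condition and the diffeomorphism type. It is step (iii) — making the number of Lagrangian classes exceed the number $|H_1(\mathcal{N},\mathbb{Z}_2)|=8$ of smooth classes — that makes the ``smoothly isotopic but not Lagrangian isotopic'' conclusion possible; the preceding lemma of this section, which also fibers over $T^3$ but realizes only six Maslov values, does not by itself force such a coincidence.
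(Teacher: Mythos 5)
Your strategy coincides with the paper's: the paper proves this lemma by specializing Theorem \ref{ex2} to $q=24$, $k=120$, $p=96m$, $n=96m+96$ and letting even $l$ vary, listing ten values $N_{L_l}\in\{2,4,6,8,12,16,24,32,48,96\}$ for $l\in\{98,100,102,104,108,112,120,128,144,192\}$, and then invoking Theorem \ref{Haef} with $|H_1(\mathcal{N},\mathbb{Z}_2)|=|(\mathbb{Z}_2)^3|=8$ and the pigeonhole principle. Your normalization $k=72$, $p=96m-48$ is an inessential reparametrization producing the same sphere dimensions $191$, $96m-121$, $23$; your gcd reduction $\gcd(144+l,l-48)=\gcd(192,l-48)$, the trivialization of the $T^3$-bundle for even parameters, the Haefliger--Hirsch count, and your correction of the misprint $\mathbb{C}^{96m-121}\to\mathbb{C}^{96m+96}$ all match the paper.

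There is, however, a genuine gap: Theorem \ref{ex2} hypothesizes $q<l<k$, which you silently dropped when you asserted the admissible range is only $48<l<96m-240$. With $k=72$ the legitimate even values are $l\in\{50,\dots,70\}$, i.e.\ $d=l-48\in\{2,\dots,22\}$, and $\gcd\bigl(\gcd(192,96(m-1)),d\bigr)$ then takes only the six values $\{2,4,6,8,12,16\}$. Your choices $l=72,80,96$ (i.e.\ $d=24,32,48$) have $l\geqslant k$, where the defining system $(\ref{ex4})$ degenerates: the block $\sum_{m=l+1}^{k}u_m^2$ is empty, the index ranges $[q+1,l]$ and $[k+1,p]$ overlap, and the multiplicity $n_3=k-l$ in the Theorem \ref{threequad} analysis is nonpositive, so neither the diffeomorphism type of the fiber nor the formula $N_{L_l}=\gcd(n-p+l,\,l+q-k,\,p-k+q)$ is available. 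Indeed, reading the overlapping system literally at $l=80$, the Gale vectors become $(1,-1,1)$, $(1,0,0)$, $(1,0,1)$, $(0,0,1)$, and the gcd of the coordinates of $\gamma_1+\dots+\gamma_n$ in a lattice basis is $\gcd(24,200,96m-120)=8$, not the claimed $32$. Six (or seven, if one justifies the borderline case $l=k$ separately via the three-point case of Theorem \ref{threequad}) certified values fall short both of the asserted eight Lagrangian isotopy classes and of the nine-plus needed to trigger the pigeonhole. The defect is structural rather than a matter of tuning $k$: once the $S^{23}$ factor forces $q=24$, the constraints $k-l-q<0<k-l$ confine $d=l+q-k$ to $(0,24)$, capping the count at six for every admissible $k$. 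In fairness, the paper's own proof makes the identical overreach (its $l=120,128,144,192$ also violate $l<k=120$), so your blind reconstruction mirrors the paper faithfully, defect included --- but as a self-contained argument the step fails, and repairing it requires genuinely different parameter engineering, not merely shrinking the $l$-range.
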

Assume that
\begin{equation*}
q=24,\; k=120, \; p=96m, \; n=96m + 96.
\end{equation*}
From  $(\ref{ex4})$ we have inequalities
\begin{equation*}
k-l-q < 0, \; \; \; n - p + k - q < p - l \;\;  \Leftrightarrow  \; \; 96 < l < 96m - 192.
\end{equation*}
Suppose $l$ is an even number. We have
\begin{equation*}
L_{l} = \psi(\mathcal{N}(24, l, 120, 96m,, 96m + 96)) = \psi(S^{191} \times S^{96m-121} \times S^{23}  \times T^3) \subset \mathbb{C}^{96m + 96},
\end{equation*}
Then minimal Maslov number is given by
\begin{equation*}
\begin{gathered}
N_{L_{98}} = 2, \; \;  N_{L_{100}} = 4, \; \;  N_{L_{102}} = 6, \; \;  N_{L_{104}} = 8, \; \; N_{L_{108}} = 12 \; \; N_{L_{112}} = 16, \\
N_{L_{120}} = 24, \; \; N_{L_{128}} = 32, \; \;  N_{L_{144}} = 48, \; \;  N_{L_{192}} = 96.
\end{gathered}
\end{equation*}
Theorem $\ref{Haef}$ says that we can not have more than $8$ smoothly not isotopic embeddings of $\mathcal{N}(24, 2l, 120, 96m,, 96m + 96)$. We constructed $10$ embeddings with different Maslov number. This means that we have at least two smoothly isotopic embeddings that are not Lagrangian isotopic.
\\

\section{Proof of Theorem $\ref{th3}$}

Let us consider a pentagon $P$ in $\mathbb{R}^2$ defined by inequalities
\begin{equation*}
\left\{
 \begin{array}{l}
x_1 + 1\geqslant 0, \quad   x_2 +1 \geqslant 0  \\
-x_1 + 1 \geqslant 0, \quad   -x_2 + 1 \geqslant 0, \\
-x_1 - x_2 + 1  \geqslant 0
 \end{array}
\right.
\end{equation*}
We have $a_1=(1,0)$, $a_2 = (0,1)$, $a_3 = (-1,0)$, $a_4 = (0,-1)$, $a_5=(-1,-1)$, $b=(1,1,1,1,1)^T$. So, the system of quadrics associated to $P$ has the form
\begin{equation}\label{pentagon3}
\left\{
 \begin{array}{l}
u_1^2 + u_3^2  = 2 \\
u_2^2 + u_4^2 = 2 \\
u_1^2 + u_2^2 + u_5^2 = 3
 \end{array}
\right.
\end{equation}
and $\gamma_1 = (1,0,1)^T$, $\gamma_2=(0,1,1)^T$, $\gamma_3 = (1,0,0)$, $\gamma_4 = (0,1,0)^T$, $\gamma_5 = (0,0,1)^T$. Our pentagon is obtained from a square by a vertex truncation. From Lemma $\ref{lemsurface}$ or Theorem $\ref{threequad}$ we have that the manifold $\mathcal{R}$ is diffeomorphic to an oriented surface of genus $5$.  Let $S_5$ be our surface.

We have that the lattice $\Lambda$ is generated by
\begin{equation*}
\gamma_3 = (1,0,0)^T, \quad \gamma_4 = (0,1,0)^T, \quad \gamma_5 = (0,0,1)^T.
\end{equation*}
And dual lattice $\Lambda^{*}$ is generated by
\begin{equation*}
\varepsilon_3 = (1,0,0), \quad \varepsilon_4 = (0,1,0), \quad \varepsilon_5 = (0,0,1).
\end{equation*}
The group $D_{\Gamma}$ acts on $S_5$ by
\begin{equation*}
\begin{gathered}
\varepsilon_3(u_1,u_2,u_3,u_4,u_5) = (-u_1, u_2, -u_3, u_4, u_5), \\
\varepsilon_4(u_1,u_2,u_3,u_4,u_5) = (u_1, -u_2, u_3, -u_4, u_5), \\
\varepsilon_5(u_1,u_2,u_3,u_4,u_5) = (-u_1, -u_2, u_3, u_4, -u_5).
\end{gathered}
\end{equation*}
We obtain a fiber bundle
\begin{equation*}
\mathcal{N} = S_5 \times_{D_{\Gamma}} T_{\Gamma} \longrightarrow T^3
\end{equation*}
with fiber $S_5$. Arguing as in Lemma $\ref{orientation}$ we see that $\varepsilon_5$ doesn't preserve the orientation of $S_5$. Therefore, the fiber bundle is not orientable.

Our pentagon $P$ is Delzant and Fano. Therefore, from Theorem $\ref{Fanos}$ we have that $\mathcal{N}$ is embedded monotone Lagrangian.

Let us find the minimal Maslov number. Consider cycles $e_i(s)$, $r_i(s)$ as in Lemma $\ref{class}$, where $i=1,2,3$. Let $d_1,...,d_m$,be elements defined in Lemma $\ref{areaform}$. From formula $(\ref{lagrmasl2})$  we have
\begin{equation*}
\begin{gathered}
\mu(r_1) = \frac{1}{2}\mu(e_1) = 2, \quad \mu(r_2) = \frac{1}{2}\mu(e_2) = 2, \quad \mu(r_3) = \frac{1}{2}\mu(e_3) = 3,\\
\mu(d_j) = 0, \quad j=1,...,m.
\end{gathered}
\end{equation*}
We see that the minimal Maslov number is equal to $1$.

\section{Proof of Theorem $\ref{th4}$}

Now let us start with equation $(\ref{pentagon3})$ and increase the dimension of coordinate spaces, i.e. take a system

\begin{equation*}
\left\{
 \begin{array}{l}
\sum\limits_{m=1}^p u_m^2  \quad + \quad \quad  \sum\limits_{m=2p+1}^{3p}u_m^2  = 2p \\
\quad \quad \quad \quad \sum\limits_{m=p+1}^{2p}u_m^2 \quad   +  \quad \sum\limits_{m=3p+1}^{4p}u_m^2 = 2p \\
\sum\limits_{m=1}^pu_m^2 +  \sum\limits_{m=p+1}^{2p}u_m^2 \quad + \quad \quad \quad \sum\limits_{m=4p+1}^{5p}u_m^2 = 3p
 \end{array}
\right.
\end{equation*}

Let $P$ be the polytope corresponding to the system above. In toric topology, there exists an algorithm for finding $P$ (see \cite{Lopez4}). As in all previous theorems we need some parameter to vary the minimal Maslov number. So, let us develop the ideas of Theorem $\ref{th3}$ and consider the following system

\begin{equation}\label{ex7}
\begin{gathered}
\left\{
 \begin{array}{l}
\sum\limits_{m=1}^pu_m^2 \hspace{.4in} +  \hspace{.4in} \sum\limits_{m=2p+1}^{2p+q}u_m^2 + \sum\limits_{m=2p+q+1}^{3p}u_m^2   =   2p \\
\hspace{.65in}  \sum\limits_{m=p+1}^{2p}u_m^2  + \sum\limits_{m=2p+1}^{2p+q}u_m^2 \hspace{.25in} + \hspace{.25in} \sum\limits_{m=3p+1}^{4p}u_m^2  =   2p + q \\
\sum\limits_{m=1}^pu_m^2 +\sum\limits_{m=p+1}^{2p}u_m^2 \hspace{.85in} + \hspace{.85in}   \sum\limits_{m=4p+1}^{5p}u_m^2  =   3p
 \end{array}
\right. \\
q \leqslant p-1, \quad q > 0, \quad p \geqslant 2
\end{gathered}
\end{equation}

In the system above we added an additional parameter $q$. We will show below that the Maslov class of the Lagrangian associated to the system above depends on $q$. By simple manipulations and changes of coordinates we get

\begin{equation*}
\left\{
 \begin{array}{l}
\sum\limits_{m=1}^{5p}u_m^2 = 7p + q\\
\\
(2p+q)\sum\limits_{m=1}^pu_m^2 -  (p-q)\sum\limits_{m=p+1}^{2p}u_m^2 - 3p\sum\limits_{m=2p+1}^{2p+q}u_m^2 -\\
 - 6p\sum\limits_{m=3p+1}^{4p}u_m^2 + 2(2p+q)\sum\limits_{m=4p+1}^{5p}u_m^2 = 0 \\
 \\
(p+q)\sum\limits_{m=1}^pu_m^2 + (p+q)\sum\limits_{m=p+1}^{2p}u_m^2 - 6p\sum\limits_{m=2p+1}^{2p+q}u_m^2 - \\
- 6p\sum\limits_{m=2p+q+1}^{3p}u_m^2 - 6p\sum\limits_{m=3p+1}^{4p}u_m^2 + 2(4p+q)\sum\limits_{m=4p+1}^{5p}u_m^2= 0
 \end{array}
\right.
\end{equation*}

We use Theorem $\ref{threequad}$ to study the topology of system $(\ref{ex7})$. Denote by $\mathcal{R}$ the manifold associated to system $(\ref{ex7})$ We have
\begin{equation*}
\begin{gathered}
\lambda_1 = (2p+q, p+q) \quad n_1 = p, \\
\lambda_2 = (-(p-q), p+q) \quad n_2 = p, \\
\lambda_3 = (-3p, -6p) \quad n_3 = q, \\
\lambda_4 = (0, -6p)  \quad n_4 = p-q, \\
\lambda_5 = (-6p, -6p) \quad  n_5=p, \\
\lambda_6 = (2(2p + q), 2(4p + q))  \quad n_6 = p.
\end{gathered}
\end{equation*}
\begin{figure}[h]
\includegraphics[width=0.5\linewidth]{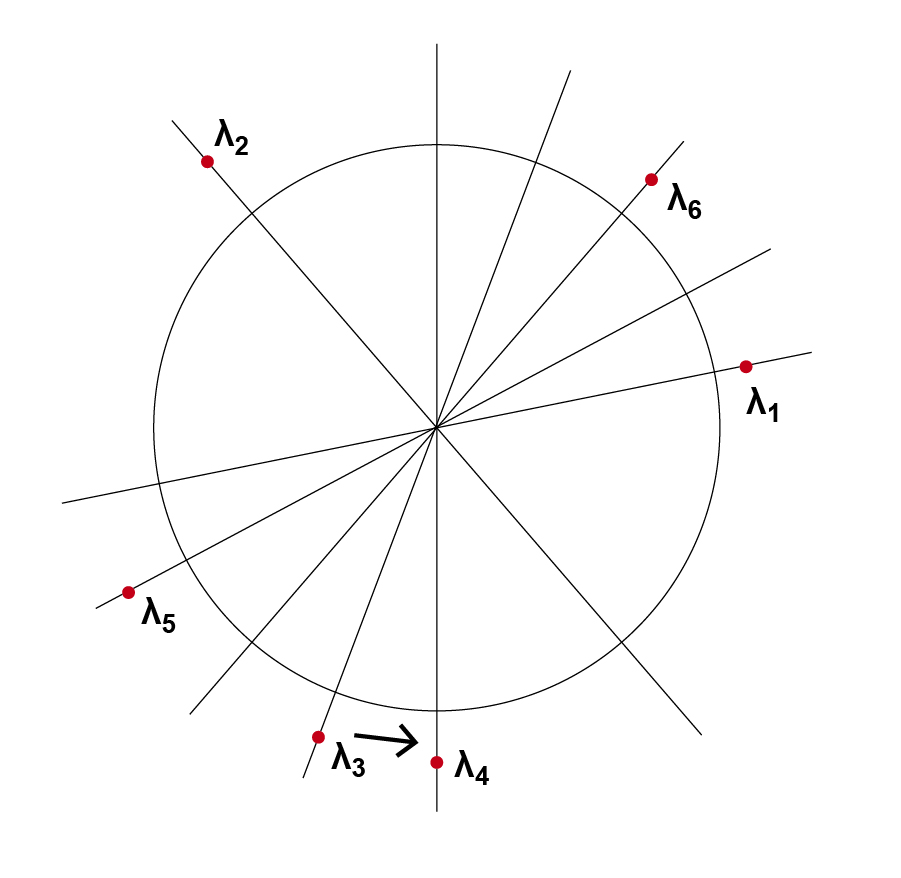}
\end{figure}
We can join points $\lambda_3$, $\lambda_4$ without breaking the regularity condition and get a point of multiplicity $n_3 + n_4 = p$. Then from Theorem $\ref{threequad}$ we see that
\begin{equation*}
\mathcal{R} = \#_5(S^{2p-1} \times S^{3p-2}).
\end{equation*}
Put
\begin{equation*}
\begin{gathered}
\mathcal{N}(q,p) = \mathcal{R} \times_{D_{\Gamma}} T^3, \quad p\geqslant 2, \\
\end{gathered}
\end{equation*}
We get that $\mathcal{N}(q,p)$ fibers over $T^3$ with fibre $\mathcal{R}$.

\begin{lemma}
The fibration is trivial if and only if numbers $p$ and $q$ are even. If the fiber bundle is orientable, then it is trivial.
\end{lemma}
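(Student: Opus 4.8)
The plan is to make the bundle's monodromy explicit and then analyze the three generating involutions separately, once up to orientation and once up to isotopy. First I would read off from system (\ref{ex7}) the Gale vectors $\gamma_m\in\mathbb{Z}^3$, where the $i$-th entry of $\gamma_m$ records whether $u_m^2$ appears in the $i$-th quadric. This partitions the $5p$ coordinates into blocks: $\gamma=(1,0,1)$ on $[1,p]$, $(0,1,1)$ on $[p+1,2p]$, $(1,1,0)$ on $[2p+1,2p+q]$, $(1,0,0)$ on $[2p+q+1,3p]$, $(0,1,0)$ on $[3p+1,4p]$, and $(0,0,1)$ on $[4p+1,5p]$. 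The basis vectors $\varepsilon_1,\varepsilon_2,\varepsilon_3$ of $\Lambda^*=\mathbb{Z}^3$ then act on $\mathcal{R}$ by negating exactly those $u_m$ with $\langle\varepsilon_i,\gamma_m\rangle$ odd, that is, those coordinates whose $\gamma$-vector has a $1$ in the $i$-th slot. Since $\mathcal{N}(q,p)\to T^3$ is the flat bundle determined by these commuting involutions, it is orientable iff each $\varepsilon_i$ preserves the orientation of $\mathcal{R}$, and trivial iff each $\varepsilon_i$ is isotopic to the identity of $\mathcal{R}$; these are the two properties to determine.

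For orientability I would reuse the normal-bundle argument from Lemma \ref{orientation}. The gradients $\nabla f_1,\nabla f_2,\nabla f_3$ of the three quadrics span the normal bundle of $\mathcal{R}$, and since each $f_i$ is even in every coordinate one checks $d\varepsilon_i(\nabla f_j|_u)=\nabla f_j|_{\varepsilon_i u}$, so $\varepsilon_i$ carries the ordered normal frame to the ordered normal frame and preserves the normal orientation. Hence $\varepsilon_i$ preserves the orientation of $\mathcal{R}$ iff it preserves that of the ambient $\mathbb{R}^{5p}$, i.e. iff it negates an even number of coordinates. The counts are $2p$ for $\varepsilon_1$, $2p+q$ for $\varepsilon_2$, and $3p$ for $\varepsilon_3$, so $\varepsilon_1$ is always orientation-preserving while $\varepsilon_2$ and $\varepsilon_3$ are orientation-preserving iff $q$ and $p$ are even respectively. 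Therefore the bundle is orientable iff $p$ and $q$ are both even.

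For triviality I would run the rotation argument, again following Lemma \ref{orientation}. A simultaneous rotation by angle $\phi\in[0,\pi]$ in a two-plane spanned by two coordinates carrying the same $\gamma$-vector preserves every quadric, since those coordinates enter each $f_i$ with equal coefficient, hence preserves $\mathcal{R}$, and at $\phi=\pi$ it negates that pair. When $p$ and $q$ are even, each block on which a given $\varepsilon_i$ acts as $-1$ has even cardinality, so I can pair up its negated coordinates and, rotating all pairs simultaneously, produce an isotopy in $\mathcal{R}$ from the identity to $\varepsilon_i$. With all three monodromies isotopic to the identity the flat bundle is trivial, so $\mathcal{N}(q,p)=\mathcal{R}\times T^3$. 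Conversely a trivial bundle is orientable, as both $\mathcal{R}$ and $T^3$ are orientable, so by the previous paragraph triviality forces $p$ and $q$ even; this closes the equivalence and at the same time shows that orientability alone already implies triviality.

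The parity counts and the explicit rotations are routine; the one point needing care is the orientation bookkeeping, namely checking that each $\varepsilon_i$ fixes the ordered normal frame so that its effect on the orientation of $\mathcal{R}$ is governed purely by the sign of its determinant on $\mathbb{R}^{5p}$. I would treat that as the main step, since the remainder follows the template already set for the product-of-simplices examples.
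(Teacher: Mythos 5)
Your proposal is correct and follows essentially the same route as the paper: reading off the blocks of equal Gale vectors from~(\ref{ex7}), using the equivariance of a normal frame (the paper's explicit normal fields, your gradients $\nabla f_j$) to reduce orientability of each $\varepsilon_i$ to the parity of the number of negated ambient coordinates ($2p$, $2p+q$, $3p$), and pairing coordinates within blocks to rotate each monodromy to the identity when $p,q$ are even, exactly as in Lemma~\ref{orientation}. The only difference is cosmetic: you count parities directly and spell out the converse step (trivial $\Rightarrow$ orientable, since $\mathcal{R}$ and $T^3$ are orientable) that the paper leaves implicit.
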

\begin{proof}
From $(\ref{ex7})$ we have
\begin{equation*}
\begin{gathered}
\gamma_1=...=\gamma_p = (1,0,1)^T, \;\; \gamma_{p+1}=...=\gamma_{2p} = (0,1,1)^T, \\
\gamma_{2p+1}=...=\gamma_{2p+q} = (1,1,0)^T, \;\; \gamma_{2p+q+1}=...=\gamma_{3p} = (1,0,0)^T, \\
\gamma_{3p+1}=...=\gamma_{4p} = (0,1,0)^T, \;\; \gamma_{4p+1}=...=\gamma_{5p} = (0,0,1)^T.
\end{gathered}
\end{equation*}
We see that the lattice $\Lambda$ is isomorphic to $\mathbb{Z}^3$ and is generated by vectors $\gamma_{3p}$, $\gamma_{4p}$, $\gamma_{5p}$. The dual  lattice $\Lambda^{*}$ is generated by
$\varepsilon_{3p} = (1,0,0)$, $\varepsilon_{4p} = (0,1,0)$, $\varepsilon_{5p} = (0,0,1)$. Then the group $D_{\Gamma} \approx \mathbb{Z}_2^3$ acts on $\mathcal{R}$ by
\begin{equation*}
\begin{gathered}
\varepsilon_{3p}(u_1,...,u_{5p}) = \\
(-u_1,...,-u_p, u_{p+1},...,u_{2p},-u_{2p+1},...,-u_{3p}, u_{3p+1},..., u_{5p}), \\
\varepsilon_{4p}(u_1,...,u_{5p}) = \\
(u_1,...,u_p, -u_{p+1},...,-u_{2p+q},u_{2p+q+1},...,u_{3p}, -u_{3p+1},..., -u_{4p}, u_{4p+1},...,u_{5p}), \\
\varepsilon_{5p}(u_1,...,u_{5p}) =\\
 (-u_1,...,-u_{2p}, u_{p+1},..., u_{4p}, -u_{4p+1},...,-u_{5p}).
\end{gathered}
\end{equation*}
Suppose that numbers $p$, $q$ are even numbers. Then arguing as in Lemma $\ref{orientation}$ we see that transition maps $\varepsilon_{3p}, \varepsilon_{4p}, \varepsilon_{5p}$ are isotopic to
\begin{equation*}
\begin{gathered}
(u_1,...,u_{5p}) \rightarrow \\
 ((-1)^p u_1, u_2,...,u_{2p}, (-1)^qu_{2p+1}, u_{2p+2}, ..., u_{3p-1}, (-1)^{p-q}u_{3p}, u_{3p+1},..., u_{5p}), \\
(u_1,...,u_{5p}) \rightarrow \\
 (u_1,...,u_p, (-1)^{p}u_{p+1}, u_{p+2},...,u_{2p}, (-1)^qu_{2p+1}, u_{2p+2},...,u_{4p-1},(-1)^pu_{4p},u_{4p+1},...,u_{5p}), \\
(u_1,...,u_{5p}) \rightarrow \\
 ((-1)^p u_1, u_{2},...,u_p, (-1)^{p}u_{p+1}, u_{p+2},...,u_{5p-1}, (-1)^pu_{5p})
\end{gathered}
\end{equation*}
respectively. We see that $\varepsilon_{5p}$ preserves the orientation if and only if $p$ is even. If $p$ is even, then $\varepsilon_{4p}$ preserves the orientation if and only if $q$ is even. As a result we get that the fibration is orientable if and only $p$ and $q$ are even numbers. If both numbers are even, then all transition maps are isotopic to identity and the fiber bunde is trivial.

\end{proof}

So, we have that if $p, q$ are even numbers, then
\begin{equation*}
\mathcal{N}(q,p) = \#_5(S^{2p-1} \times S^{3p-2}) \times T^3, \;\; if \;\; p , q \;\; are \;\; even.
\end{equation*}
We see that the topology of $\mathcal{N}(q,p)$ is independent of $q$. Put
\begin{equation*}
L_q = \psi(\mathcal{N}(q,p)) \subset \mathbb{C}^n.
\end{equation*}
Note that $\Lambda_u = \Lambda$ for all $u \in \mathcal{R}$. Hence, by Lemma $\ref{l}$ the submanifold $L_q$ is embedded. Let us find the Maslov class and symplectic area homomorphism of $L_q$. Let us consider cycles $e_i(s)$, $r_i(s)$ as in Lemma $\ref{class}$, where $i=1,2,3$. We assumed that $p \geqslant 2$. Therefore, the fiber $\mathcal{R}$ is simply connected and $H_1(L_k, \mathbb{Z}) = H_1(T^3, \mathbb{Z}) = \mathbb{Z}^3$. From formulas $(\ref{lagrmasl2})$ and (\ref{area}) we get
\begin{equation*}
\begin{gathered}
\mu(r_1) = \frac{1}{2}\mu(e_1)= 2p, \quad \mu(r_2) = \frac{1}{2}\mu(e_2) = 2p + q, \quad \mu(r_3) = \frac{1}{2}\mu(e_3) = 3p,\\
\psi^{*}(\lambda)(r_1) = \frac{1}{2}\psi^{*}(\lambda)(e_1) = p\pi, \quad \psi^{*}(\lambda)(r_2) = \frac{1}{2}\psi^{*}(\lambda)(e_2) = \frac{(2p+q)\pi}{2},\\
\psi^{*}(\lambda)(r_3) = \frac{1}{2}\psi^{*}(\lambda)(e_3) = \frac{3p\pi}{2}.\\
\end{gathered}
\end{equation*}
We see that $L_q$ is embedded monotone Lagrangian with minimal Maslov number $gcd(2p, 2p+q, 3p) = gcd(p, q)$.
\\

Suppose that $L_{2q} = \psi(\mathcal{N}(2q, 12p)) =  \psi(\#_5 (S^{24p-1} \times S^{36p-2}) \times T^3) \subset \mathbb{C}^{120p}$. Then,
\begin{equation*}
\quad N_{L_2} = 2, \quad N_{L_4} = 4, \quad N_{L_6} = 6, \quad N_{L_{12}} = 12.
\end{equation*}
So, we have  $4$ embeddings with different Maslov number.

\begin{lemma}
Let $p$ be an arbitrary positive integer. There exist at least $10$ monotone Lagrangian embeddings of $\#_5 (S^{192p-1} \times S^{288p-2}) \times T^3$  into $\mathbb{C}^{480p}$ distinct up to Lagrangian isotopy. At least two of these embeddings are smoothly isotopic but they are not Lagrangian isotopic.
\end{lemma}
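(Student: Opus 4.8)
The plan is to specialize the construction of this section so that the free integer $p$ of the statement enters through the \emph{internal} parameter. Concretely, I would run the construction of Theorem~\ref{th4} with the role of its parameter $p$ played by $96p$: the fibre then becomes $\mathcal{R} = \#_5(S^{192p-1}\times S^{288p-2})$, and $\mathcal{N} = \mathcal{R}\times_{D_\Gamma}T^3$ embeds into $\mathbb{C}^{5\cdot 96p} = \mathbb{C}^{480p}$. Since only even values of the second parameter $q$ will be used, the preceding lemma makes the fibration trivial, so $\mathcal{N}\cong \#_5(S^{192p-1}\times S^{288p-2})\times T^3$ is a fixed closed, connected, \emph{oriented} manifold, independent of $q$, of even dimension $480p\geqslant 5$.

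First I would exhibit the ten Lagrangian isotopy classes. By the computation already carried out above, $L_q=\psi(\mathcal{N}(q,96p))$ is embedded and monotone with minimal Maslov number $\gcd(96p,q)$. Choosing $q$ to range over the ten even divisors of $96=2^5\cdot 3$, namely
\begin{equation*}
q\in\{2,4,6,8,12,16,24,32,48,96\},
\end{equation*}
each of which divides $96p$ and lies in the admissible range $q<96p$, yields minimal Maslov numbers equal to $q$ itself, hence ten pairwise distinct values. Because the minimal Maslov number is invariant under Lagrangian isotopy, these ten embeddings are pairwise not Lagrangian isotopic.

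Next I would bound the number of \emph{smooth} isotopy classes from above via Theorem~\ref{Haef}. As $\mathcal{N}$ is closed, oriented, connected, of even dimension $\geqslant 5$, its smooth isotopy classes of embeddings into $\mathbb{C}^{480p}$ are in bijection with $H_1(\mathcal{N},\mathbb{Z}_2)$. Each factor $S^{192p-1}\times S^{288p-2}$ is simply connected (both sphere dimensions exceed $1$), hence so is the connected sum $\mathcal{R}$, giving $H_1(\mathcal{N},\mathbb{Z}_2)=H_1(T^3,\mathbb{Z}_2)=\mathbb{Z}_2^3$, a set of exactly $8$ elements. Thus there are at most $8$ smooth isotopy classes. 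Finally, pigeonhole finishes the argument: ten embeddings distributed among at most eight smooth classes force at least two of them to be smoothly isotopic, and since those two carry different minimal Maslov numbers they cannot be Lagrangian isotopic. The step needing the most care is the arithmetic that realizes ten \emph{distinct} Maslov numbers while the $\mathbb{Z}_2$-homology caps the smooth count at eight; this works precisely because $96$ has exactly ten even divisors, and one must simultaneously confirm that evenness of both parameters makes $\mathcal{N}$ orientable so that the oriented form of the Haefliger--Hirsch classification is available.
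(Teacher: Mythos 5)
Your proposal is correct and follows essentially the same route as the paper: the paper likewise takes $L_{2q}=\psi(\mathcal{N}(2q,96p))$ with the ten even divisors $2,4,6,8,12,16,24,32,48,96$ of $96$ as values of the second parameter, obtaining minimal Maslov numbers $\gcd(96p,q)=q$ and hence ten distinct Lagrangian isotopy classes, and then invokes Theorem~\ref{Haef} to cap the number of smooth isotopy classes at $8$ and concludes by pigeonhole. Your write-up is in fact slightly more careful than the paper's, since you explicitly verify simple connectivity of the fibre, the computation $H_1(\mathcal{N},\mathbb{Z}_2)=\mathbb{Z}_2^3$, and the orientability needed for the Haefliger--Hirsch classification.
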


\begin{proof}
Assume that $L_{2q} = \psi(\mathcal{N}(2q, 96p)) =  \psi (5\# (S^{192p-1} \times S^{288p-2}) \times T^3 ) \subset \mathbb{C}^{480p}$. We get
\begin{equation*}
\begin{gathered}
 \quad N_{L_2} = 2, \quad N_{L_4} = 4, \quad N_{L_6} = 6, \quad N_{L_{8}} = 8, \quad N_{L_{12}}=12\\
\quad N_{L_{16}} = 16, \quad  N_{L_{24}} = 24, \quad  N_{L_{32}} = 32, \quad  N_{L_{48}} = 48, \quad  N_{L_{96}} = 96.
\end{gathered}
\end{equation*}
So, we have $10$ monotone Lagrangian embeddings distinct up to Lagrangian isotopy. From Theorem $\ref{Haef}$ we see that we can not have more than $8$ embeddings of $\mathcal{N}(2q, 96p)$ distinct up to smooth isotopy.  As a result we obtain smoothly isotopic submanifolds which are not Lagrangian isotopic.
\end{proof}

\section{Proof of Theorem $\ref{th5}$}

Let $P$  be a 6-gon defined by
\begin{equation*}
\left\{
 \begin{array}{l}
x_1 + 1\geqslant 0, \quad \; x_2 + 1 \geqslant 0  \\
-x_1 + 1 \geqslant 0, \quad \; -x_2 + 1 \geqslant 0, \\
-x_1 - x_2 + 1 \geqslant 0, \quad \; x_1 + x_2   +1 \geqslant 0
 \end{array}
\right.
\end{equation*}
We have $a_1=(1,0)$, $a_2 = (0,1)$, $a_3 = (-1,0)$, $a_4 = (0,-1)$, $a_5=(-1,-1)$, $a_6 = (1,1)$ $b=(1,1,1,1,1,1)^T$. The system of quadrics associated to $P$ has the form
\begin{equation*}
\left\{
 \begin{array}{l}
u_1^2 + u_3^2  = 2 \\
u_2^2 + u_4^2 = 2 \\
u_1^2 + u_2^2 + u_5^2 = 3 \\
u_1^2 + u_2^2 - u_6^2 = 1
 \end{array}
\right.
\end{equation*}
 and $\gamma_1 = (1,0,1,1)^T$, $\gamma_2 = (0,1,1,1)^T$, $\gamma_3 = (1,0,0,0)^T$, $\gamma_4 = (0,1,0,0)^T$, $\gamma_5 = (0,0,1,0)^T$, $\gamma_6 = (0,0,0,-1)^T$,

The 6-gon is obtained from a 5-gon by a vertex truncation. Using Lemma $\ref{lemsurface}$ or Theorem $\ref{Lop}$ we get that $\mathcal{R}$ is an orientable surface of genus 17.  Denote this surface by $S_{17}$.

The lattice $\Lambda$ is generated by
\begin{equation*}
\gamma_3 = (1,0,0,0), \quad \gamma_4 = (0,1,0,0), \quad \gamma_5 = (0,0,1,0), \quad \gamma_6 = (0,0,0,1).
\end{equation*}
And dual the lattice $\Lambda^{*}$ is generated by
\begin{equation*}
\varepsilon_3 = (1,0,0,0), \quad \varepsilon_4 = (0,1,0,0), \quad \varepsilon_5 = (0,0,1,0), \quad \varepsilon_6 = (0,0,0,1).
\end{equation*}
The group $D_{\Gamma} \approx \mathbb{Z}_2^4$ acts on $S_{17}$ by
\begin{equation*}
\begin{gathered}
\varepsilon_3(u_1,u_2,u_3,u_4,u_5) = (-u_1, u_2, -u_3, u_4, u_5, u_6), \\
\varepsilon_4(u_1,u_2,u_3,u_4,u_5) = (u_1, -u_2, u_3, -u_4, u_5, u_6), \\
\varepsilon_5(u_1,u_2,u_3,u_4,u_5) = (-u_1, -u_2, u_3, u_4, -u_5, u_6),\\
\varepsilon_6(u_1,u_2,u_3,u_4,u_5) = (-u_1, -u_2, u_3, u_4, u_5, -u_6).
\end{gathered}
\end{equation*}
We get a fiber bundle
\begin{equation*}
\mathcal{N} = S_{17} \times_{D_{\Gamma}} T_{\Gamma} \longrightarrow T^4
\end{equation*}
with fibre $S_{17}$. Arguing as in Lemma $\ref{orientation}$ we see that $\varepsilon_5$ and $\varepsilon_6$ don't preserve the orientation of $S_{17}$. Hence, the fiber bundle is not orientable.

The constructed 6-gon is Delzant and Fano, therefore $\psi(\mathcal{N})$ is monotone Lagrangian embedded  into $\mathbb{C}^6$.

Let us find the Maslov class. Let $e_i, r_i$ be elements as in Lemma $\ref{class}$. Let $d_1,...,d_m$ be elements defined in Lemma $\ref{areaform}$. From formula $(\ref{lagrmasl2})$  we have
\begin{equation*}
\begin{gathered}
\mu(r_1) = \frac{1}{2}\mu(e_1) = 2, \quad \mu(r_2) = \frac{1}{2}\mu(e_2) = 2, \quad \mu(r_3) = \frac{1}{2}\mu(e_3) = 3, \\
 \mu(r_4) = \frac{1}{2}\mu(e_4) = 1, \quad \mu(d_j) = 0, \quad j=1,...,m.
\end{gathered}
\end{equation*}
We see that the minimal Maslov number is equal to $1$.

\section{Proof of Theorem $\ref{th6}$}

Assume that $P$ is a pentagon defined by

\begin{equation*}
\left\{
 \begin{array}{l}
x_1 \geqslant 0, \quad x_2 \geqslant 0  \\
-(k - 1)x_1 + k \geqslant 0,\\
-(2k - 1)x_2 + 2k \geqslant 0, \\
-(k - 3)x_1 - (k + 2)x_2 + 2k \geqslant 0 \\
 \end{array}
\right. k \geqslant 4, \; \; k \in \mathbb{Z}
\end{equation*}
The associated system of quadrics has the form
\begin{equation*}
\left\{
 \begin{array}{l}
(k - 1)u_1^2 + u_3^2  = k \\
(2k - 1)u_2^2 + u_4^2 = 2k \\
(k - 3)u_1^2 + (k + 2)u_2^2 + u_5^2 = 2k \\
 \end{array}
\right. k \geqslant 4, \;\; k \in \mathbb{Z}
\end{equation*}
From Lemma $\ref{lemsurface}$ or Theorem $\ref{Lop}$ we have that the system of quadrics associated to any pentagon defines an oriented surface of genus $5$. Denote the surface by $S_5$. In this example we get a fiber bundle $\mathcal{N} \longrightarrow T^3$, where the fibre is $S_5$.

We have that $\gamma_1 = (k - 1,0, k - 3)$, $\gamma_2 = (0, 2k-1, k +2)$, $\gamma_3 = (1,0,0)$, $\gamma_4 = (0,1,0)$, $\gamma_5 = (0,0,1)$. The dual lattice $\Lambda^{*}$ is generated by $\varepsilon_3=(1,0,0)$, $\varepsilon_4 = (0,1,0)$,  $\varepsilon_5 = (0,0,1)$ and these elements act on $S_5$ by
\begin{equation*}
\begin{gathered}
\varepsilon_3(u_1,u_2,u_3,u_4,u_5) = ((-1)^{k - 1}u_1, u_2, -u_3, u_4, u_5)\\
\varepsilon_4(u_1,u_2,u_3,u_4,u_5) = (u_1, (-1)^{2k - 1}u_2, u_3, -u_4, u_5)\\
\varepsilon_5(u_1,u_2,u_3,u_4,u_5) = ((-1)^{k-3}u_1, (-1)^{k +2}u_2, u_3, u_4, -u_5)
\end{gathered}
\end{equation*}

Arguing as in Lemma $\ref{orientation}$ we see that if $k$ is even, then the fiber bundle is orientable. If $k$ is odd, then the fiber bundle is not orientable. Moreover, we see that if $k$ is even, then the diffeomorphism type of $\mathcal{N}$ is independent of $k$. Denote
\begin{equation*}
L = \psi(\mathcal{N})
\end{equation*}

Let us note that our pentagon is not Delzant. Hence, $\psi$ doesn't define an embedding but defines an immersion. As in Theorem $\ref{th3}$ we can find the Maslov class and symplectic are from. Let $r_1, r_2, r_3$ be elements of $H_1(L, \mathbb{R})$ as in Lemma $\ref{class}$. Let $d_1,...,d_m$ be elements defined in Lemma $\ref{areaform}$. From formulas $\ref{lagrmasl2}$ and $\ref{area}$ we get
\begin{equation*}
\begin{gathered}
\mu(r_1) = k, \quad \mu(r_2) = 2k, \quad \mu(r_3)  = 2k,\\
\psi^{*}(\lambda)(r_1) = \frac{\pi k}{2}, \quad \psi^{*}(\lambda)(r_2)  = \pi k, \quad \psi^{*}(\lambda)(r_3) = \pi k, \\
\mu(d_j) = \psi^{*}(\lambda) = 0, \quad j=1,,,m
\end{gathered}
\end{equation*}

We have that $L = \psi(\mathcal{N}) \subset \mathbb{C}^5$ is an immersed monotone Lagrangian and
\begin{equation*}
N_L = k.
\end{equation*}
Suppose that $k$ is even. Then the diffeomorphism type of $\mathcal{N}$ is independent of $k$, but the Maslov class of $L$ depends on $k$. As a result, we get infinitely many monotone immersions  of $\mathcal{N}$ into $\mathbb{C}^5$ distinct up to Lagrangian isotopy.

Department of Mathematics, Stony Brook University, Stony Brook, NY, 11794 USA\\
L.D. Landau Institute for Theoretical Physics, Chernogolovka,  142432, \\ Russia \\
Email address: vardan8oganesyan@gmail.com

\end{document}